\documentclass{amsart}
\usepackage{amssymb}
\usepackage[all]{xy}
\usepackage[pdftex]{graphics,color}

\setlength{\parindent}{0em}

\newtheorem{prop}{Proposition}[section]
\newtheorem{lemma}[prop]{Lemma}
\newtheorem{defin}[prop]{Definition}
\newtheorem{cor}[prop]{Corollary}
\newtheorem{theorem}[prop]{Theorem}
\newtheorem{claim}[prop]{Claim}

\newcounter{cor4}
\newcounter{lemma1}

\newcounter{claim1}

\newcounter{claim3}
\newcounter{claim4}
\newcounter{claim}

\newcommand{\ho}{\mathcal{O}}

\newcommand{\altA}{\mathcal{A}}
\newcommand{\altB}{\mathcal{B}}
\newcommand{\altD}{\mathcal{D}}
\newcommand{\altT}{\mathcal{T}}

\newcommand{\altE}{\mathcal{E}}
\newcommand{\altH}{\mathcal{H}}

\newcommand{\altL}{\mathcal{L}}

\newcommand{\altM}{\mathcal{M}}
\newcommand{\altN}{\mathcal{N}}
\newcommand{\altP}{\mathcal{P}}
\newcommand{\altQ}{\mathcal{Q}}

\newcommand{\sur}{\twoheadrightarrow}

\newcommand{\roof}[5]{\xymatrix @R=0.5cm @C=0.5cm { & {#2} \ar[dr]^{#5} \ar[dl]_{#4} & \\ {#1}  & & {#3}}}

\newcommand{\dd}{^{\vee\vee}}

\newcommand{\Dr}[1]{\DER^{b,#1}}
\newcommand{\Ch}[1]{\Coh^{#1}}
\newcommand{\QCh}[1]{\altM od^{#1}}

\DeclareMathOperator{\Hom}{Hom}

\DeclareMathOperator{\Ext}{Ext}
\DeclareMathOperator{\stab}{Stab}

\DeclareMathOperator{\SL}{SL}

\DeclareMathOperator{\Aut}{Aut}
\DeclareMathOperator{\Coh}{Coh}
\DeclareMathOperator{\QCoh}{\altM od}
\DeclareMathOperator{\Cohc}{Coh_{(c)}}
\DeclareMathOperator{\Qcohc}{QCoh_{(c)}}
\DeclareMathOperator{\QCohc}{\altM od_{(c)}}
\DeclareMathOperator{\Derc}{D^b_{(c)}}
\DeclareMathOperator{\QDerc}{D^+_{(c)}(\altM od(X))}
\DeclareMathOperator{\Ima}{Im}
\DeclareMathOperator{\Rea}{Re}

\DeclareMathOperator{\im}{im}
\DeclareMathOperator{\rk}{rk}
\DeclareMathOperator{\Id}{id}
\DeclareMathOperator{\DER}{D}
\DeclareMathOperator{\Der}{D^b}

\DeclareMathOperator{\supp}{supp}
\DeclareMathOperator{\codim}{codim}
\DeclareMathOperator{\Pic}{Pic}
\DeclareMathOperator{\coker}{coker}
\DeclareMathOperator{\ch}{ch}
\DeclareMathOperator{\td}{td}
\DeclareMathOperator{\ce}{c}
\DeclareMathOperator{\Ho}{H}
\DeclareMathOperator{\Kom}{Kom^b}
\DeclareMathOperator{\torssh}{t}
\DeclareMathOperator{\cl}{cl}
\DeclareMathOperator{\Glt}{\widetilde{GL}{}^+(2,\mathbb{R})}
\DeclareMathOperator{\Gl2}{GL^+(2,\mathbb{R})}

\DeclareMathOperator{\depth}{depth}
\DeclareMathOperator{\dimh}{dh}
\DeclareMathOperator{\Amp}{Amp}
\DeclareMathOperator{\Nef}{Nef}
\DeclareMathOperator{\Eff}{Eff}
\DeclareMathOperator{\Weil}{Weil}
\DeclareMathOperator{\Int}{Int}
\DeclareMathOperator{\HOM}{HOM}
\DeclareMathOperator{\EXT}{EXT}
\DeclareMathOperator{\Ka}{K}
\DeclareMathOperator{\Vect}{Vect}
\DeclareMathOperator{\Num}{N}
\DeclareMathOperator{\tot}{tot}
\DeclareMathOperator{\Bir}{Bir}
\DeclareMathOperator{\divisor}{div}
\DeclareMathOperator{\Var}{\mathcal{V}ar}

\title{Quotient categories, stability conditions, and birational geometry}
\author{Sven Meinhardt, Holger Partsch}

\begin{document}

\maketitle

\begin{abstract}
This article deals with the quotient category of the category of coherent sheaves on an irreducible smooth projective variety by  the full subcategory of sheaves supported in codimension greater than c. It turns out that  this category has homological dimension c. As an application of this, we will describe the space of stability conditions on its derived category in the case c=1. Moreover, we describe all exact equivalences between these quotient categories in this particular case which is closely related to classification problems in birational geometry. 
\end{abstract}

\tableofcontents

\section{Introduction}

After stability conditions were invented by T. Bridgeland \cite{Bridgeland02} a lot of effort has been done to describe the space of stability conditions for various situations in algebra and geometry. In complex geometry this space is more or less well understood for curves (\cite{Macri},\cite{Okada}), projective and generic K3 surfaces resp.\ two-dimensional tori (\cite{Bridgeland03},\cite{HuybStelMac}) and for generic complex tori (\cite{Meinhardt07}). Moreover, there are some `exceptional' cases were stability conditions are known (\cite{Bridgeland06},\cite{Bridgeland05},\cite{BridgelandCY},\cite{Macri04},\cite{MacMehStel},\cite{Meinhardt}). Nevertheless, a general approach to construct stability conditions on a variety is far of reach. In particular, there is no known stability condition on a compact Calabi--Yau threefold yet, which is very unsatisfying as the theory of stability conditions was motivated by string theory. Roughly speaking, as long as the $\Ka$-group and the homological dimension are small, there is a chance to make some progress. \\
Having this in mind, one could study the space of stability conditions on triangulated subcategories or quotient categories of $\Der(X)$, where $X$ is a sufficiently `nice' variety. The results might give some hints to the situation on the whole derived category $\Der(X)$. \\
This article was motivated by the attempt to describe spaces of stability conditions on quotient categories. It deals with the quotient category $\Derc(X)$ of $\Der(X)$ obtained by sending all complexes supported in codimension greater than $c$ to zero. It turns out that this category is the bounded derived category of an abelian category $\Cohc(X)$ obtained in the same way (see Appendix A). While writing this article the connection between these quotient categories and questions of birational geometry became more and more transparent and we devote the last part of this article to this relationship. There is a very nice equivalent description of this quotient category which throws a first light on its connection to birational geometry.
\begin{prop} For any smooth projective variety $X$ over the field $k$ there is an exact $k$-linear equivalence
\[ \Derc(X) \cong \varinjlim_{\codim(X\setminus U)>c} \Der(U),\]
where the direct limit is taken over the directed set of open subsets $U\subset X$ with $\codim(X\setminus U)>c$. 
\end{prop}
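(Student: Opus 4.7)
The plan is to exhibit mutually inverse functors between the two sides using the classical Verdier localisation $\Der(X)/\Der_Z(X)\simeq\Der(X\setminus Z)$ for closed $Z\subset X$. I will write $\altT\subset\Der(X)$ for the thick subcategory of complexes supported in codimension $>c$, so that $\Derc(X)=\Der(X)/\altT$. For every open $U\subset X$ with $Z:=X\setminus U$ of codimension $>c$ one has $\Der_Z(X)\subset\altT$, and the quotient $\Der(X)/\Der_Z(X)\to\Derc(X)$ composed with the inverse of the Verdier localisation will yield an exact functor $\Der(U)\to\Derc(X)$. These will be compatible with further restriction $V\subset U$ and so assemble into
\[\alpha:\varinjlim_{U}\Der(U)\longrightarrow\Derc(X).\]
In the reverse direction, I will check that the composition $\Der(X)\to\Der(U)\to\varinjlim_{U}\Der(U)$ kills $\altT$: if $E\in\altT$, the open set $U_0:=X\setminus\supp E$ lies in the directed system (because $\codim\supp E>c$) and $E|_{U_0}=0$ already, so $E$ vanishes in the colimit. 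This induces
\[\beta:\Derc(X)\longrightarrow\varinjlim_{U}\Der(U).\]

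Next I will verify that $\alpha$ and $\beta$ are mutually inverse. On objects, $\alpha\beta([E])=[E]$ because the Verdier localisation sends the class of $E|_U\in\Der(U)$ back to $[E]\in\Der(X)/\Der_Z$, which then maps to $[E]\in\Derc(X)$; conversely, for $F\in\Der(U)$ one has $\alpha([F])=[\widetilde F]$ for any lift $\widetilde F\in\Der(X)$, and $\beta([\widetilde F])=[\widetilde F|_U]=[F]$. On morphisms, the identification follows from the calculus of fractions: any roof $A\xleftarrow{s}A'\to B$ in $\Der(X)$ with $\mathrm{cone}(s)\in\altT$ has the support of $\mathrm{cone}(s)$ contained in $X\setminus U$ for some $U$ in the directed system (namely $U=X\setminus\supp(\mathrm{cone}(s))$), and conversely any such roof descends to some $\Der(U)$, giving the bijection
\[\Hom_{\Derc(X)}(A,B)\;=\;\varinjlim_{U}\Hom_{\Der(U)}(A|_U,B|_U).\]

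The main obstacle will be the localisation equivalence $\Der(X)/\Der_Z(X)\simeq\Der(U)$ itself: essential surjectivity requires that every coherent sheaf on $U$ extend to a coherent sheaf on $X$ (a standard result for noetherian varieties, with inductive lifting for bounded complexes), while fullness and faithfulness demand a Verdier-style lifting of roofs through the restriction functor. Once these standard but non-trivial ingredients are granted, the passage to the colimit and the computation above are formal, and the equivalence is manifestly exact and $k$-linear.
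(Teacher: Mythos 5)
Your proof is correct, but it takes a genuinely different route from the paper's. You reduce everything to the classical localisation theorem $\Der(X)/\DER^b_Z(X)\simeq\Der(U)$ for a single closed subset $Z=X\setminus U$, observe that the thick subcategory $\Dr{c+1}(X)$ is the filtered union of the subcategories $\DER^b_Z(X)$ over closed $Z$ with $\codim(Z)>c$, and then use the formal fact that a Verdier quotient by a filtered union of thick subcategories is the filtered colimit of the individual quotients (any roof, and any equivalence of roofs, involves only finitely many cones and is therefore witnessed at a finite stage of the system). The paper instead works directly with the natural functor $\varepsilon_X:\Derc(X)\to\altD_{(c)}(X)$ and proves full faithfulness by establishing $\Hom_{(c)}(F,G)\cong\varinjlim_U\Hom_{\Der(U)}(F|_U,G|_U)$ by induction on the length of the complexes $F$ and $G$ via the five lemma, the base case being Corollary \ref{Ext}, which identifies $\Ext^i_{(c)}(F,G)$ with $\varinjlim_U\Ext^i(F|_U,G|_U)$ through the universal $\delta$-functor argument; essential surjectivity is handled by the same coherent-extension device in both proofs. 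Your route is conceptually cleaner and bypasses the derived-functor apparatus entirely, but it imports the per-$U$ localisation theorem as a black box, and that theorem's fullness and faithfulness is precisely where the nontrivial content lives (extending coherent complexes from $U$ to $X$ and lifting roofs through restriction) --- essentially the same work the paper carries out by hand. Conversely, the paper's computation produces the graded formula at the level of individual $\Ext$-groups of sheaves, with its naturality as a $\delta$-functor, which is reused later (for instance in the proof that $\Cohc(X)$ has homological dimension $c$), whereas your argument only yields the total Hom-groups in the derived category.
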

Note that the derived categories appearing on the right hand side have homological dimension $\dim(X)$ with respect to the standard t-structure. Surprisingly, the limit has a smaller homological dimension which is one of the key ingredients for the investigation of the quotient categories.
\begin{theorem}
The homological dimension of $\Cohc(X)$ is $c$ for any smooth projective variety $X$. 
\end{theorem}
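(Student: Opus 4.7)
The plan is to combine two complementary vanishings—on local $\mathcal{E}xt$-sheaves and on cohomology of sufficiently small opens—via the local-to-global spectral sequence, both extracted from the direct-limit description of $\Derc(X)$ furnished by the Proposition. Concretely, $\Derc(X)=\varinjlim_U\Der(U)$ for $U$ with $\codim(X\setminus U)>c$ gives
\[
\Ext^i_{\Cohc(X)}(F,G)=\varinjlim_U\Ext^i_{\Coh(U)}(F|_U,G|_U),
\]
and it suffices to show that every class in $\Ext^i_{\Coh(U)}$ with $i>c$ restricts to zero on some smaller $U'\subseteq U$ still in the system.

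The first ingredient is a pointwise projective-dimension bound. By Auslander--Buchsbaum in the regular local ring $\ho_{X,x}$, one has $\text{pd}_{\ho_{X,x}}(F_x)\leq \codim(x)$, so $\supp\mathcal{E}xt^q(F,G)\subseteq\{x:\text{pd}(F_x)\geq q\}$ has codimension $\geq q$ in $X$. Setting
\[
U_F:=X\setminus\bigcup_{q>c}\supp\mathcal{E}xt^q(F,\ho_X),
\]
one obtains an open in the system on which $F|_{U_F}$ has projective dimension $\leq c$—equivalently, it admits a locally free resolution of length $\leq c$, obtained by truncating any free resolution of $F$ at its $c$-th syzygy, which is locally free over $U_F$. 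In particular, $\mathcal{E}xt^q(F|_U,G|_U)=0$ on any $U\subseteq U_F$ for $q>c$, killing all columns of the local-to-global spectral sequence past degree $c$.

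The second ingredient is a geometric cover lemma: for any projective variety $Y$ and any open $V\subseteq Y$ with $\codim(Y\setminus V)>c'$, there exists $V'\subseteq V$ in the same system admitting an affine cover by $c'+1$ opens of $Y$. Choose $c'+1$ general divisors $D_1,\ldots,D_{c'+1}$ in $|\altI_{Y\setminus V}\otimes\altL^{\otimes m}|$ for $\altL$ very ample on $Y$ and $m\gg 0$; Bertini ensures $\bigcap_i D_i$ has codimension $c'+1$ in $Y$ while containing $Y\setminus V$, so $V':=Y\setminus\bigcap_i D_i=\bigcup_i(Y\setminus D_i)$ is a $(c'+1)$-fold affine cover of an open still in the system. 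Cech cohomology with respect to this cover then gives $H^p(V',\altH|_{V'})=0$ for every quasi-coherent $\altH$ and all $p>c'$, whence $\varinjlim_V H^p(V,\altH|_V)=0$ for $p>c'$.

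Combine through the local-to-global spectral sequence $E_2^{p,q}=H^p(U,\mathcal{E}xt^q(F|_U,G|_U))\Rightarrow\Ext^{p+q}_{\Coh(U)}(F|_U,G|_U)$ on $U\subseteq U_F$: columns $q>c$ vanish by the first ingredient. For $0\leq q\leq c$, the sheaf $\mathcal{E}xt^q(F,G)$ is supported on a projective subvariety $Y_q\subseteq X$ of codimension $\geq q$, and $U\cap Y_q$ is open in $Y_q$ with complement of codimension $\geq c-q+1$; the cover lemma on $Y_q$ with $c':=c-q$ thus yields $\varinjlim_U H^p(U,\mathcal{E}xt^q|_U)=0$ for $p>c-q$. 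Every bidegree with $p+q>c$ and $q\leq c$ satisfies $p>c-q$, so all contributing $E_2$-terms die in the limit, whence $\Ext^i_{\Cohc(X)}(F,G)=0$ for $i>c$; sharpness of the value $c$ is witnessed by the structure sheaf of a codimension-$c$ subvariety. The principal difficulty is that neither bound in isolation suffices—projective dimension $\leq c$ and cohomological dimension $\leq c$ individually give only $\Ext^i=0$ for $i>2c$ via the spectral sequence—and the tightening to $i>c$ hinges on the diagonal estimate $p>c-q$, which exploits the fact that $\supp\mathcal{E}xt^q(F,G)$ gains codimension linearly in $q$; a subsidiary point is that the Bertini step must be carried out on the possibly singular supports $Y_q$, which is standard over an infinite base field but deserves verification.
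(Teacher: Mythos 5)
Your argument follows essentially the same route as the paper: the direct-limit description of $\Ext^i_{(c)}$, the Auslander--Buchsbaum bound $\codim\supp\altE xt^q(F,G)\ge q$ to kill the columns $q>c$, the affine-cover claim (choosing $\kappa+1$ ample divisors containing the complement) to get $\varinjlim_U\Ho^p(U,\altE xt^q|_U)=0$ for $p>c-q$, and the combination of the two via the diagonal estimate in the local-to-global spectral sequence; the sharpness example is also the paper's. The one step you pass over with a ``whence'' is the step the paper singles out as delicate: since $\Ho^p(V,E|_V)=0$ is not stable under shrinking $V$, and the good open depends on $(p,q)$, one cannot work on a single $U$; one must justify that the local-to-global spectral sequences for the various $U$ form an inductive system (the paper does this with a fixed Cartan--Eilenberg resolution on $X$ restricted to each $U$) and that the limit spectral sequence converges to $\varinjlim_U\Ext^{p+q}(F|_U,G|_U)$ with $E_2$-term the limit of the $E_2$-terms (Lemma 3.11 and Appendix B). Your formulation of the second ingredient as a vanishing \emph{in the limit} is the correct one, so this is a genuine but fillable gap rather than a flaw; your reduction to a single $U_F$ on which all $\altE xt^q$ with $q>c$ vanish at once is a small simplification of the paper's treatment of the $q>c$ case, but it does not remove the need for the limit spectral sequence, which is forced by the columns $q\le c$.
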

However, for $\dim(X) > c$ the quotient category is not of finite type over $k$ (cf.\ Proposition \ref{HomExt}) and possesses no Serre functor.
We will use Theorem 1.2 to classify all locally-finite numerical stability conditions on $\Der(\Cohc(X))\cong\Derc(X)$ in the case $c=1$. Moreover, we show that the space of stability conditions is as disconnected as it could be. The precise statement is the following.
\begin{theorem}
Let $X$ be an irreducible smooth projective variety of dimension $\dim(X)\ge 2$ and $\stab(\DER^b_{(1)}(X))$ the space of locally-finite numerical stability conditions on $\DER^b_{(1)}(X)$ equipped with the usual $\Glt$-action. Then, $\Glt$ acts free and any $\Glt$-orbit is a connected component of the complex manifold $\stab(\DER^b_{(1)}(X))$. The space of connected components is parametrized by the set of rays in the convex cone 
\[ C(X)=\big\{\omega\in \Num_1(X)_\mathbb{R} \mid \inf\{ \omega.D\mid D\subset X\mbox{ an effective divisor on }X \}>0 \big\}.\]
For each $\omega\in C(X)$ there is a unique stability condition in the component associated to $\mathbb{R}_{>0}\omega$ with heart $\Coh_{(1)}(X)$ and central charge $Z(E)=-\omega.\ce_1(E)+i\rk(E)$. 
\end{theorem}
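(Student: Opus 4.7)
\emph{Strategy.} The proof splits into three parts: construction of the stability conditions for each $\omega \in C(X)$; classification, showing every locally-finite numerical stability condition arises this way up to $\Glt$; and verification that each $\Glt$-orbit is open and closed. The key inputs are Theorem~1.2 (so $\Coh_{(1)}(X)$ is hereditary) and Bridgeland's deformation theorem.

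\emph{Existence.} Fix $\omega \in C(X)$ and set $Z(E) := -\omega.\ce_1(E) + i\,\rk(E)$. That $Z$ is a stability function on $\Coh_{(1)}(X)$ is immediate: torsion-free objects satisfy $\Ima Z > 0$ via $\rk > 0$, and nonzero torsion objects are represented by sheaves of pure codimension one, so $\ce_1$ is a nonzero effective class and the bound $\inf_D \omega.D > 0$ gives $Z \in \mathbb{R}_{<0}$. The Harder--Narasimhan property follows from Bridgeland's noetherian criterion: $\Coh_{(1)}(X)$ is noetherian as a Serre quotient of $\Coh(X)$, torsion-free chains are bounded by rank, and torsion chains by the positive infimum in $C(X)$. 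Local finiteness and factorization of $Z$ through $\Num$ are immediate.

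\emph{Classification.} Let $\sigma = (W, \mathcal{P})$ be arbitrary. By Theorem~1.2 the category $\Coh_{(1)}(X)$ is hereditary, so every object of $\Derc(X)$ splits as a direct sum of shifted objects of $\Coh_{(1)}(X)$. Structure sheaves $\ho_D$ of irreducible divisors are simple objects of $\Coh_{(1)}(X)$, hence indecomposable in $\Derc(X)$ and $\sigma$-semistable; line bundles $L$ are indecomposable for the same reason. The surjection $\ho_X \twoheadrightarrow \ho_D$ and the extension class of $D$ in $\Ext^1(\ho_D, \ho_X(-D))$ give phase inequalities trapping the $\ho_D$ and line bundles in a common window of length one. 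After applying a uniquely determined $g \in \Glt$ we may assume $W(\ho_X) = i$ and $\mathcal{P}((0,1]) = \Coh_{(1)}(X)$. The central charge is then determined by $\rk$ and by its restriction to codimension-one classes, yielding $W(E) = -\omega.\ce_1(E) + i\,\rk(E)$ for some $\omega \in \Num_1(X)_{\mathbb{R}}$. Positivity of $W$ on each $\ho_D$ gives $\omega.D > 0$ for every effective $D$, and local finiteness of $\sigma$ forces the uniform bound $\inf_D \omega.D > 0$, i.e.\ $\omega \in C(X)$.

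\emph{Free action, components, and main obstacle.} If $g \in \Glt$ fixes $\sigma = (Z, \mathcal{P})$, its underlying linear map fixes $Z$; since $(\ce_1, \rk)$ generates a two-dimensional $\mathbb{R}$-subspace of $\mathbb{R}^2$ (witnessed by any line bundle together with any $\ho_D$), this linear map is trivial, and the lifted phase shift must then vanish, so $g = 1$. Openness of each $\Glt$-orbit is Bridgeland's deformation theorem: small perturbations of $Z$ lift to $\sigma$ and are absorbed by the two-dimensional $\Glt$-action. Closedness follows from the classification, since the ray $\mathbb{R}_{>0}\omega$ is a continuous invariant — it is the kernel of $\Ima W$ restricted to divisor classes — so distinct rays cannot be deformed into each other. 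The hardest step is the classification: one must leverage hereditarity carefully to pin down the heart and to identify enough $\sigma$-semistable geometric objects, despite $\Coh_{(1)}(X)$ not being of finite type over $k$.
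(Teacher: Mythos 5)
Your overall architecture (existence, classification up to $\Glt$, topology of orbits) matches the paper's, and several ingredients are correctly identified: the use of homological dimension one to split complexes, the role of the simple objects $\ho_D$, and the derivation of $\omega\in C(X)$ from locally-finiteness. However, there are two genuine gaps where you declare as ``immediate'' or ``absorbed'' precisely the steps that carry the real content of the proof.

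First, local-finiteness of the constructed stability conditions is not immediate; it is the hardest part of the argument. The heart $\Coh_{(1)}(X)$ is noetherian but \emph{not} artinian, the image of $Z$ is not discrete for irrational $\omega$, and the categories $\altP((\phi-\eta,\phi+\eta))$ contain genuine two-term complexes. The paper devotes roughly two pages to proving that these quasi-abelian categories are artinian and noetherian, via a delicate analysis of the long exact cohomology sequences attached to chains of strict subobjects (bounding ranks of the kernels $K^l_m$, stabilizing the images $I^l_m$, and using the hypothesis $\inf_D\omega.D=\varepsilon>0$ to force $Z$-jumps of size at least $\varepsilon$ at each non-stationary step). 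Your one-sentence dismissal skips the step where the defining inequality of $C(X)$ is actually used in the converse direction.

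Second, your topology argument is circular. Saying that perturbations of $Z$ ``are absorbed by the two-dimensional $\Glt$-action'' presupposes that the deformation space $V(\Sigma)=\{U : \|U\|_\sigma<\infty\}$ is only two-dimensional, but $\Hom_{\mathbb Z}(\mathbb Z\oplus\Num^1(X),\mathbb C)$ has dimension $1+\rho(X)$, which is typically larger. Likewise, the ray $\mathbb R_{>0}\omega$ is \emph{not} obviously a locally constant invariant: $C(X)$ is connected and $\omega$ varies continuously with the central charge, so nothing a priori prevents a path of stability conditions from rotating the ray. The paper closes this hole by exhibiting, for any $\omega'\notin\mathbb R\omega$, a sequence of $\sigma$-semistable line bundles $\ho_X(r_mD_m)$ with $|\omega.D_m|$ small and $\omega'.D_m\to\infty$, so that $\|Z'\|_\sigma=\infty$; this is what forces $\dim_{\mathbb C}V(\Sigma)=2$ and hence both openness and closedness of the orbit. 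Without such an argument neither conclusion follows. (A smaller gap: in the classification you trap only $\ho_D$ and line bundles in a common phase window; the paper's Lemma on monotonicity of shifts with phase is needed to see that \emph{all} stable objects are shifted sheaves with coherent shifts, so that the Harder--Narasimhan filtration refines the cohomology filtration.)
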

Our definition of `numerical' is explained in Section 4. Moreover, we also obtain a classification of all locally-finite stability conditions not necessarily numerical. \\
As already mentioned, there is a close relationship between the quotient category and questions in birational geometry. It is not difficult to show (cf.\ Corollary \ref{birational}) that the quotient category $\DER^b_{(0)}(X)$ encodes the birational type of $X$. To be precise, any exact equivalence between these categories is up to a shift induced by a unique  birational transformation. How can we interpret the quotient categories $\Derc(X)$ in the case $c>0$? It turns out that any rational map $\psi:X\dashrightarrow Y$ such that $\codim(\overline{\psi^{-1}(Z)})< c$ for any closed $Z\subset Y$ with $\codim(Z)<c$ induces an exact functor $\psi^\ast:\Derc(Y)\longrightarrow \Derc(X)$. In particular, any birational transformation which is an isomorphism in codimension $c$ gives rise to an equivalence of the quotient categories. This leads directly to the following question. Can we use the quotient category $\Derc(X)$ to determine $X$ up to this stronger version of birational equivalence? In addition to the case $c=0$ this is only possible for $c=1$. The key observation to prove this is the following decomposition theorem.
\begin{theorem} 
Let $X$ and $Y$ be two irreducible smooth projective varieties of dimension at least two. Any exact $k$-equivalence $\Psi:\DER^b_{(1)}(X)\longrightarrow \DER^b_{(1)}(Y)$ has a unique decomposition $\Psi=[n]\circ L\circ \psi^\ast$ by a shift functor, a tensor product with a line bundle $L\in \Pic(X)$ and a pullback functor induced by a birational map $\psi:Y \dashrightarrow X$ which is an isomorphism in codimension one.
\end{theorem}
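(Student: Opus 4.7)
My plan is to prove the theorem in three stages: (i) use Theorem~1.3 to normalize $\Psi$ by a shift so that it preserves the standard heart $\Coh_{(1)}$; (ii) descend the resulting abelian equivalence to one between categories of coherent sheaves on a pair of big open subsets; (iii) apply Gabriel's reconstruction to extract both the birational map and the line bundle twist.

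\textbf{Step 1: Preservation of the heart.} Apply $\Psi$ to a canonical stability condition $\sigma_\omega$ from Theorem~1.3. Its image is again a locally-finite numerical stability condition on $\DER^b_{(1)}(Y)$, and hence by Theorem~1.3 lies in a unique connected component $\Glt\cdot\sigma_{\omega'}$ for some $\omega'\in C(Y)$. Tracking how $\Psi_\ast$ acts on numerical K-theory and matching central charges of the specific form $Z(E)=-\omega.\ce_1(E)+i\,\rk(E)$ on both sides, one verifies that up to a shift $[n]$ the image $\Psi[n](\sigma_\omega)$ equals $\sigma_{\omega'}$, so that $\Psi[n]$ restricts to an exact equivalence of abelian categories $\bar\Psi:\Coh_{(1)}(X)\to\Coh_{(1)}(Y)$. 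The simple objects of $\Coh_{(1)}(X)$ are precisely the structure sheaves $\mathcal{O}_D$ of prime divisors $D\subset X$: every simple lifts to a pure coherent sheaf of generic rank one supported on a single prime divisor, and any further line bundle on $D$ becomes trivial upon restriction to a sufficiently large open of $D$ (hence to a big open of $X$). Consequently $\bar\Psi$ induces a bijection of prime divisors.

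\textbf{Step 2: Descent and Gabriel reconstruction.} By the abelian analogue of Proposition~1.1, $\Coh_{(1)}(X)\simeq\varinjlim_U\Coh(U)$ over big opens $U\subset X$, and similarly for $Y$. Using noetherianity and the existence of finitely many generators of $\Coh_{(1)}(X)$ (e.g.\ $\mathcal{O}_X$ together with finitely many twists), one verifies that $\bar\Psi$ is induced by an abelian equivalence $\Coh(U)\to\Coh(V)$ between suitably chosen big opens $U\subset X$, $V\subset Y$. Gabriel's reconstruction theorem then yields a scheme isomorphism $\psi:V\to U$ and a line bundle $L'\in\Pic(V)$ so that this equivalence equals $(L'\otimes-)\circ\psi^\ast$. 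Because the complements of the big opens have codimension at least two in the smooth varieties $X$ and $Y$, the map $\psi$ extends uniquely to a birational map $\psi:Y\dashrightarrow X$ which is an isomorphism in codimension one, and via $\Pic(X)=\Pic(U)$ together with $\psi^\ast:\Pic(X)\to\Pic(Y)$ the line bundle $L'$ corresponds to a unique $L\in\Pic(X)$. Assembling these pieces gives the desired factorization $\Psi=[n]\circ L\circ\psi^\ast$. Uniqueness of the triple $(n,L,\psi)$ is immediate: $n$ is recovered as the cohomological degree of $\Psi(\mathcal{O}_X)$, $L$ as $\Psi[-n](\mathcal{O}_X)$ (transported to $\Pic(X)$), and $\psi$ from the induced bijection on prime divisors.

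\textbf{Main obstacle.} The principal difficulty lies in Step~1, namely in showing that the $\Glt$-element relating $\Psi(\sigma_\omega)$ to the canonical stability $\sigma_{\omega'}$ lies in the cyclic subgroup generated by the shift. This requires an invariant characterization of the functionals $\rk$ and $\ce_1$ inside the numerical K-theory of $\Coh_{(1)}$, so that their specific appearance in the canonical central charge can be shown to transport under $\Psi_\ast$ in a constrained way. Once the heart is pinned down, the descent in Step~2 follows from standard noetherian arguments and Gabriel's reconstruction is classical, so the remainder of the argument is routine.
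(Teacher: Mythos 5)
Your proposal has two genuine gaps, the first of which you have flagged yourself. (a) The reduction to a heart-preserving equivalence via stability conditions does not go through as written: $\Psi(\sigma_\omega)$ lies in some orbit $\sigma_{\omega'}\cdot\Glt$, but the element of $\Glt$ relating the two could a priori rotate the slicing by a non-integral amount, so that $\Psi(\Coh_{(1)}(X))$ would be a genuine tilt of $\Coh_{(1)}(Y)$ rather than a shift of it. ``Matching central charges'' does not exclude this, because $\rk$ and $\ce_1$ are not invariantly characterized inside the K-group, and the Euler pairing is not even defined on $\DER^b_{(1)}$ since many Hom-groups are infinite-dimensional (Proposition \ref{HomExt}); it is also not automatic that $\Psi$ preserves the chosen numerical quotient. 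The paper avoids stability conditions here entirely: since $\Coh_{(1)}$ has homological dimension one, every object splits into shifted indecomposable sheaves, and the finite/infinite dichotomy for $\dim\Hom_{(1)}$ and $\dim\Ext^1_{(1)}$ between torsion and torsionfree sheaves pins down all the shifts at once (Proposition \ref{autoequiv}). Note moreover that the paper explicitly observes that the slicing determined by $\Psi(\Coh_{(1)}(X))$ is \emph{not} locally finite, so the slicing classification of Section 4 cannot be invoked for it.

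(b) The second gap is the descent in Step 2. An equivalence $\Coh_{(1)}(X)\to\Coh_{(1)}(Y)$ of direct limit categories is not in general induced by an equivalence $\Coh(U)\to\Coh(V)$ at any finite stage: morphisms in $\Coh_{(1)}$ are germs along big opens, the functor and its quasi-inverse cannot be realized on a fixed pair $(U,V)$ without argument, and ``one verifies'' is doing all the work. Gabriel reconstruction applies to $\Coh$ of a noetherian scheme, and $\Coh_{(1)}(X)$ is not such a category. The paper's substitute for this step is the actual content of the proof: it extracts $\psi$ from the induced equivalence $\Coh_{(0)}(X)\simeq \Vect^{f.d.}_{K(X)}$, proves $\psi$ has no exceptional divisor by a linear-system argument exploiting that the equivalence preserves simple objects (so zero divisors of transported sections stay irreducible), reduces to an autoequivalence fixing $\ho_X$ and acting trivially on $K(X)$, shows it fixes the section ring $\bigoplus_m \Ho^0(X,\ho_X(mH))$, and then runs a \emph{modified} Bondal--Orlov ample-sequence argument --- modified because $\ho_X(mH)$ is not an ample sequence in $\Coh_{(1)}(X)$ (condition (ii) fails as $\Ext^1_{(1)}(\ho_X(mH),F)\neq 0$ for torsionfree $F$), so the standard uniqueness theorem cannot simply be quoted. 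None of this is routine, so the concluding claim that the remainder of the argument is standard substantially undersells what still has to be proved.
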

This leads directly to the following two results.
\begin{cor}
Two irreducible smooth projective varieties $X$ and $Y$ are isomorphic in codimension one if and only if their quotient categories $\DER^b_{(1)}(X)$ and $\DER^b_{(1)}(Y)$ are equivalent as $k$-linear triangulated categories.
\end{cor}
\begin{cor} 
Two irreducible smooth projective surfaces $X$ and $Y$ are isomorphic if and only if there is an exact $k$-equivalence between their quotient categories $\DER^b_{(1)}(X)$ and $\DER^b_{(1)}(Y)$. 
\end{cor}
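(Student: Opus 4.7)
The converse direction (isomorphism implies equivalence) is formal: an isomorphism $X\cong Y$ induces an equivalence of derived categories preserving the full subcategory of complexes supported in codimension greater than one, hence descends to an equivalence of the quotient categories. For the main direction, my plan is to invoke Theorem~1.4, which applies since $\dim X = \dim Y = 2 \geq 2$. It yields a decomposition $\Psi = [n]\circ (L\otimes -) \circ \psi^\ast$ for some $n\in\mathbb{Z}$, a line bundle $L \in \Pic(X)$, and a birational map $\psi:Y \dashrightarrow X$ which is an isomorphism in codimension one. All that remains is to upgrade such a $\psi$ to a genuine isomorphism of surfaces.

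To do this I would choose a smooth projective common resolution of $\psi$: a surface $W$ equipped with birational morphisms $f:W\to Y$ and $g:W\to X$ satisfying $g = \psi\circ f$ as rational maps (e.g.\ desingularize the closure of the graph of $\psi$ in $Y\times X$). Let $U\subset Y$ and $V\subset X$ be open subsets with $\psi:U\xrightarrow{\sim}V$ and with finite complements. Comparing the morphisms $f$ and $\psi^{-1}\circ g$ on the non-empty open $f^{-1}(U)\subset g^{-1}(V)$ and using separatedness of $Y$, one checks that in fact $f^{-1}(U) = g^{-1}(V)$. I would then verify the crucial claim that every irreducible curve $C\subset W$ contracted by $f$ is also contracted by $g$: either $C\subset f^{-1}(U)$, so $g(C)=\psi(f(C))$ is a point; or $C\subset W\setminus f^{-1}(U) = W\setminus g^{-1}(V)$, in which case $g(C)\subset X\setminus V$ is finite and hence a single point by connectedness of $C$.

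Since $g$ is thereby constant on the (connected) fibers of $f$ and $f_\ast\mathcal{O}_W=\mathcal{O}_Y$, the rigidity lemma supplies a morphism $h:Y\to X$ with $g = h\circ f$, which automatically extends $\psi$. Finally, I would argue that $h$ cannot contract any divisor: an irreducible curve $D\subset Y$ collapsed by $h$ would meet the dense open $U$, forcing $\psi|_U$ to fail injectivity along $D\cap U$. Hence $h$ is a birational morphism of smooth projective surfaces that contracts no divisor, and by the Zariski factorization of such morphisms into blow-ups at smooth points this forces $h$ to be an isomorphism. Thus $X\cong Y$. The only non-formal input, and therefore the main obstacle, is precisely this surface-specific passage from ``isomorphism in codimension one'' to an honest isomorphism, which is what makes Corollary~1.6 strictly stronger than Corollary~1.5.
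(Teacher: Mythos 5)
Your proposal is correct and follows the paper's route: both directions reduce to Theorem \ref{functordecomp}, which produces a birational map $\psi:Y\dashrightarrow X$ that is an isomorphism in codimension one, and then to the fact (stated in the paper in Subsection 3.1 without proof, for $\dim\le c+1$) that such a map between smooth projective surfaces is an honest isomorphism. Your resolution-of-the-graph/rigidity-lemma/Zariski-factorization argument merely supplies the details of that last standard step, which the paper treats as known, so there is nothing to object to.
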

The last Corollary is the dimension two version of the well know fact that two irreducible smooth projective curves $X$ and $Y$ are isomorphic if there are birational equivalent, i.e.\ if and only if there is an exact $k$-linear equivalence between their quotient categories $\DER^b_{(0)}(X)$ and $\DER^b_{(0)}(Y)$. It would be interesting to know whether an analogue statement holds in higher dimensions.\\
\\
\textbf{Question.} \it Are two irreducible smooth projective varieties $X$ and $Y$ of dimension $d$ isomorphic if and only if their quotient categories $\DER^b_{(d-1)}(X)$ and $\DER^b_{(d-1)}(Y)$ are equivalent as $k$-linear triangulated categories? More general, are $X$ and $Y$ isomorphic in codimension $c<d$ if and only if $\Cohc(X)$ and $\Cohc(Y)$ are derived equivalent?\rm\\
\\
\\
\\
\textbf{Acknowledgements.} We wish to thank D. Huybrechts for useful discussions as well as the following institutions for their support: Bonn International Graduate School in Mathematics, Imperial College London and SFB/TR 45.

\newpage

\section{Slicings and stability conditions}

We will start by recalling the definitions of a slicing and a stability condition as introduced by T. Bridgeland in \cite{Bridgeland02}. We fix an algebraically closed field $k$ and denote by $\altD$ a $k$-linear triangulated category. 
\begin{defin}[\cite{Bridgeland02}, Definition 3.3] A slicing on $\altD$ is a family $\altP=(\altP(\phi))_{\phi\in \mathbb{R}}$ of full additive subcategories of $\altD$ satisfying the following three axioms:
\begin{enumerate}
 \item $\altP(\phi+1)=\altP(\phi)[1]$ for all $\phi\in \mathbb{R}$,
 \item if $\phi_1>\phi_2$ then $\Hom_\altD(A_1,A_2)=0$ for all $A_1\in \altP(\phi_1)$ and all $A_2\in \altP(\phi_2)$,
 \item for $0\neq E\in \altD$ there is a Harder--Narasimhan filtration, i.e.\  a finite sequence of real numbers $\phi_1 > \phi_2 > \ldots > \phi_n$ and a collection of triangles
\[ \xymatrix @C=0.3cm @R=0.7cm { 0=E_0 \ar[rr] & & E_1 \ar[rr] \ar[dl] & & E_2 \ar[rr] \ar[dl] & & \ldots \ar[rr] & & E_{n-1} \ar[rr] & & E_n=E \ar[dl] \\
 & A_1 \ar@{-->}[ul] & & A_2 \ar@{-->}[ul] & & & & & & A_n \ar@{-->}[ul] } \]
with $A_j\in \altP(\phi_j)$ for all $j$. 
\end{enumerate}
A nonzero object of $\altP(\phi)$ is called semistable of phase $\phi\in \mathbb{R}$.
\end{defin}
Every bounded t-structure on $\altD$ defines a slicing. Indeed, if we denote the heart of the t-structure by $\altA$, the family $\altP(\phi)\mathrel{\mathop{:}}=\altA[\phi-1]$ for $\phi\in \mathbb{Z}$ and $\altP(\phi)=0$ for $\phi\in \mathbb{R}\setminus \mathbb{Z}$ satisfies the upper axioms. The Harder--Narasimhan filtration is the usual cohomology filtration with respect to the heart $\altA$. There is an inverse of this construction which we will explain below. Other more continuous examples of slicings are provided by stability conditions which we will introduce now.
\begin{defin}[\cite{Bridgeland02}, Definition 1.1]
A stability condition $(Z,\altP)$ on a triangulated category $\altD$ consists of a linear map $Z:\Ka(\altD)\rightarrow \mathbb{C}$, called the central charge, and a slicing $\altP$ such that for any semistable object $E$ of phase $\phi$ there is some $m(E)\in \mathbb{R}_{>0}$ with $Z(E)=m(E)\exp(i\pi\phi)$.
\end{defin}
For any interval $I\subseteq \mathbb{R}$, define  $\altP(I)$ to be the extension-closed full subcategory of $\altD$ generated by the subcategories $\altP(\phi)$ for $\phi\in I$. Bridgeland has shown that the categories $\altP(I)$ are quasi-abelian for every interval $I\subset \mathbb{R}$ of length $<1$ (\cite{Bridgeland02}, Lemma 4.3). A quasi-abelian category is an additive category with kernels and cokernels such that every pullback of a strict epimorphism is a strict epimorphism, and every pushout of a strict monomorphism is a strict monomorphism. In contrast to an abelian category, the image of a morphism is not necessarily isomorphic to its coimage. Morphisms with this additional property are called strict. Subobjects with a strict embedding are called strict and similarly for quotients. It can be shown that the additive subcategories $\altP(\phi)$ and $\altP((\phi,\phi+1])$ as well as $\altP([\phi,\phi+1))$ are always abelian for every $\phi\in \mathbb{R}$. Simple objects of $\altP(\phi)$ are called stable objects of phase $\phi$. Moreover, the pair $(\altD^{\le 0}, \altD^{\ge 0}):=(\altP((0,\infty)), \altP((-\infty,1])$ is a bounded t-structure on $\altD$ with heart $\altA:=\altP((0,1])$. Furthermore, the linear map $Z:\Ka(\altA)=\Ka(\altD) \rightarrow \mathbb{C}$ satisfies
\begin{itemize} \it
\item[(i)]  if $0\neq E\in \altA$ then $Z(E)\in H=\{r\exp(i\pi\phi)\mid r>0, 0<\phi\le 1\}\subseteq \mathbb{C}$, 
\item[(ii)]  for $0\neq E\in \altA$ there is a Harder--Narasimhan filtration, i.e.\  a finite chain of subobjects $0=E_0 \subset E_1 \subset \ldots \subset E_{n-1}\subset E_n=E$ whose factors $F_j=E_j/E_{j-1}$ are semistable objects of $\altA$ with $\phi(F_1) > \phi(F_2) > \ldots > \phi(F_n)$. An object $F\in \altA$ is said to be semistable (with respect to $Z$) if $\phi(G)\le \phi(F)$ for every subobject $0\neq G\subset F$. 
\end{itemize}
Giving a stability condition on a triangulated category $\altD$ is equivalent to giving a bounded t-structure $(\altD^{\le 0}, \altD^{\ge 0})$ on $\altD$ with heart $\altA:=\altD^{\le 0}\cap\altD^{\ge 0}$ and a linear map $Z:\Ka(\altA) \rightarrow \mathbb{C}$ satisfying the two properties (i) and (ii) (\cite{Bridgeland02}, Proposition 5.3). There is a very useful criterion to check the Harder--Narasimhan property (ii)  (see \cite{Bridgeland02}, Proposition 2.4).
\begin{itemize} \it
\item[(ii.1)] There are no infinite sequences $\ldots \subset E_{j+1} \subset E_j \subset \ldots \subset E_0$ of subobjects in $\altA$ with $\phi(E_{j+1})>\phi(E_j)$ for all $j$, and
\item[(ii.2)] there are no infinite sequences $E^0 \sur \ldots  \sur E^{j} \sur E^{j+1} \sur \ldots $ of quotients in $\altA$ with $\phi(E^{j})>\phi(E^{j+1})$ for all $j$.
\end{itemize}
The following technical property is very important to control deformations of stability conditions. 
\begin{defin}
A slicing $\altP$ is called locally-finite if there exists a real number $\eta>0$ such that for all $\phi\in\mathbb{R}$ the quasi-abelian category $\altP((\phi-\eta,\phi+\eta))\subseteq \altD$ is of finite length. A stability condition $(Z,\altP)$ is called locally-finite if the underlying slicing has this property.
\end{defin}
Note that a quasi-abelian category is called of finite length if it is artinian and noetherian, i.e.\  any descending sequence and any ascending sequence of strict subobjects becomes stationary. This is equivalent to the fact that every object of the category has a finite filtration by strict subobjects, the so-called Jordan--H\"older filtration, such that the successive quotients are simple, i.e.\  they have no strict subobjects. Since $\altP(\phi)$ is an abelian subcategory of $\altP((\phi-\eta,\phi+\eta))$, every descending respectively ascending sequence of subobjects becomes stationary. Thus, $\altP(\phi)$ is of finite length for every $\phi\in \mathbb{R}$. In particular, every semistable object has a Jordan--H\"older filtration by stable objects of the same phase if the slicing is locally-finite. \\

There is a natural topology on the set of all locally-finite stability conditions on $\altD$ (\cite{Bridgeland02}, Section 6). In order to get a finite-dimensional space of stability conditions, we fix a projection $\Ka(\altD)\sur \altN(\altD)$ onto a free abelian group of finite rank and restrict ourselves to locally-finite stability conditions whose central charge factorizes over the projection $\Ka(\altD)\sur \altN(\altD)$. We call a stability condition with this property numerical and we denote by $\stab(\altD)$ the topological subspace of all locally-finite numerical stability conditions on $\altD$. In the following all stability conditions on $\altD$ are assumed to be numerical and locally-finite.
\begin{theorem}[\cite{Bridgeland02}, Corollary 1.3] \label{bigtheorem}
For each connected component $\Sigma\subset \stab(\altD)$ there is a complex linear subspace 
\[V(\Sigma)\subset \Hom_\mathbb{Z}(\altN(\altD),\mathbb{C})\subset \Hom_\mathbb{Z}(\Ka(\altD),\mathbb{C})\] and a local homeomorphism $\pi:\Sigma \rightarrow V(\Sigma)$ which maps a stability condition to  its central charge. In particular, $\stab(X)$ has a natural structure of a finite-dimensional complex manifold such that $\pi$ is locally biholomorphic. 
\end{theorem}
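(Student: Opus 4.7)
This is Bridgeland's deformation theorem, and the plan is to construct local inverses of $\pi$ by deforming the central charge freely inside a finite-dimensional complex vector space and reading off the corresponding slicing. First I would equip $\stab(\altD)$ with the generalized metric
\[ d(\sigma_1,\sigma_2)=\sup_{0\neq E\in\altD} \bigl\{|\phi^-_{\sigma_1}(E)-\phi^-_{\sigma_2}(E)|,\ |\phi^+_{\sigma_1}(E)-\phi^+_{\sigma_2}(E)|,\ \bigl|\log\tfrac{m_{\sigma_1}(E)}{m_{\sigma_2}(E)}\bigr|\bigr\}, \]
where $\phi^\pm_\sigma(E)$ denote the extremal Harder--Narasimhan phases of $E$ with respect to $\sigma$ and $m_\sigma(E)$ its total mass. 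With respect to this topology the map $\pi$ is automatically continuous, since $|Z_\sigma([E])-Z_{\sigma'}([E])|$ is controlled by the masses and the phase shifts above.

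The main content is the construction of a local inverse. I would fix $\sigma_0=(Z_0,\altP_0)\in\Sigma$, choose $\eta>0$ witnessing local finiteness of $\altP_0$, and show that for every $Z\in\Hom_{\mathbb{Z}}(\altN(\altD),\mathbb{C})$ sufficiently close to $Z_0$ — close enough that the image of every $\altP_0$-stable object still lies in the open upper half plane near its old position — there is a unique slicing $\altP$ near $\altP_0$ such that $(Z,\altP)$ is a locally-finite stability condition. The candidate $\altP$ is built from the Jordan--H\"older factors already present in each window: since $\altP_0((\phi-\eta,\phi+\eta))$ is of finite length, any $E\in\altD$ has only finitely many $\altP_0$-stable constituents in a given window, and $Z$ merely permutes their phases slightly; refining each $\altP_0$-Harder--Narasimhan filtration according to the new ordering produces the $\altP$-Harder--Narasimhan filtration of $E$.

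The subset $V(\Sigma)$ is then defined as the image of all such admissible central charges glued over base points of $\Sigma$, and it is an open subset of a complex linear subspace because the construction is compatible with the $\Glt$-action on $\stab(\altD)$, which covers a complex linear action on $\Hom_{\mathbb{Z}}(\altN(\altD),\mathbb{C})$. Exhibiting a continuous inverse on a small ball in $V(\Sigma)$ shows that $\pi$ is a local homeomorphism, and pulling back the flat complex structure of $\Hom_{\mathbb{Z}}(\altN(\altD),\mathbb{C})$ equips $\Sigma$ with the desired finite-dimensional complex manifold structure with $\pi$ locally biholomorphic.

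The hard part is verifying the slicing axioms for the candidate $\altP$ and, in particular, establishing the Harder--Narasimhan property globally, i.e.\ for arbitrary objects whose $\altP_0$-support is not confined to a single $\eta$-window. Local finiteness is the essential tool: it ensures that the perturbed phases form a discrete set on each window so the reordering is well defined, and it reduces the chain conditions (ii.1) and (ii.2) for the new stability to the finite Jordan--H\"older filtrations already controlled by $\altP_0$. The argument is then really a bookkeeping of how finitely many semistable factors rearrange under the perturbed central charge, with the locally-finite assumption doing all the hard analytical work.
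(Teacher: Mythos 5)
The paper does not prove this statement at all: it is quoted directly from Bridgeland (Corollary 1.3 of \cite{Bridgeland02}), with the description of $V(\Sigma)$ as $\{U \mid \|U\|_\sigma<\infty\}$ recorded immediately afterwards. So the only thing to check is whether your reconstruction of Bridgeland's deformation argument would actually close, and two of your steps would not survive as written. First, the new slicing is not produced by ``refining each $\altP_0$-Harder--Narasimhan filtration according to the new ordering'' of the old Jordan--H\"older constituents. In Bridgeland's proof one defines $\altP(\psi)$ to be the set of objects of the thin quasi-abelian category $\altP_0((\psi-\eta,\psi+\eta))$ that are semistable for the new central charge $W$ restricted to that category. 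New semistable objects typically arise as extensions of old stable objects whose perturbed phases become aligned, so the new filtration is in places a \emph{coarsening} rather than a refinement, and the perturbed phases of the old stable factors need not even be totally ordered compatibly with the old filtration. One must separately establish the Harder--Narasimhan property for $W$ on each thin category (using finite length to verify the chain conditions), and then --- the step your sketch omits entirely --- prove the Hom-vanishing axiom between the locally defined slices in order to glue them into a single global slicing and to get global Harder--Narasimhan filtrations for objects whose old phases are spread over more than one window. That gluing is where most of the content of Bridgeland's proof sits.

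Second, your explanation of why $V(\Sigma)$ is a complex linear subspace is not correct: compatibility with the $\Glt$-action plays no role. The subspace is $V(\Sigma)=\{U\in\Hom_{\mathbb{Z}}(\altN(\altD),\mathbb{C}) \mid \|U\|_\sigma<\infty\}$, which is linear simply because finiteness of the sup-norm is preserved under linear combinations. The crucial point, which your phrase ``for every $Z$ sufficiently close to $Z_0$'' glosses over, is that the deformation theorem only applies to perturbations $W=Z_0+U$ satisfying $|W(E)-Z_0(E)|<\sin(\pi\epsilon)\,|Z_0(E)|$ uniformly over all $\sigma_0$-semistable $E$; for a direction $U$ with $\|U\|_{\sigma_0}=\infty$ no rescaling achieves this, which is precisely why $\pi$ is a local homeomorphism onto a \emph{proper} subspace in general. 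Independence of $V$ from the choice of base point in $\Sigma$ then follows from local constancy along the connected component, not from gluing images over base points.
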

Giving a stability condition $\sigma\in \Sigma$, the space $V(\Sigma)$ is characterized by 
\[ V(\Sigma)=\{U\in \Hom_\mathbb{Z}(\altN(\altD),\mathbb{C})\mid  \|U\|_\sigma < \infty\}, \] 
where
\[ \| U \|_\sigma \mathrel{\mathop{:}}= \sup\left\{ \left. \frac{|U(E)|}{|Z(E)|} \;\right| E \mbox{ semistable in } \sigma\right\}, \]
and $\| \cdot\|_\sigma$ is a norm on $V(\Sigma)$ which can be used to define the (standard) topology on $V(\Sigma)$ \cite{Bridgeland02}.\\ 
If $\altD$ is the bounded derived category $\Der(X)$ of coherent sheaves on an irreducible smooth projective variety $X$, we define $\altN(\Der(X))=\mathrel{\mathop{:}}\altN(X)$ to be the image of $\Ka(\Der(X))=\Ka(X)$ under the Mukai map $v:\Ka(X) \longrightarrow \Ho^\ast(X,\mathbb{Q})$ which associates to every element $e\in \Ka(\Der(X))$ its Mukai vector $v(e)=\ch(e)\sqrt{\td(X)}$. The set of all locally-finite numerical stability conditions on $\Der(X)$ is denoted by $\stab(X)$.\\

The group of $k$-linear exact autoequivalences $\Aut(\altD)$ of $\altD$ and the universal covering group of $\Gl2$ act continuously on $\stab(\altD)$ (\cite{Bridgeland02}, Lemma 8.2). An autoequivalence $\Psi$ acts on $\stab(\altD)$ from the left by $\Psi\cdot(Z,\altP)=(Z\circ \psi^{-1}, \altP')$, where $\psi$ is the induced action on the K-group $\Ka(\altD)$ and $\altP'(\phi)=\Psi(\altP(\phi))$. Furthermore, there is a natural action of the universal covering group $\Glt$ of $\Gl2$ from the right. The group $\Glt$ can be thought of as the set of pairs $(g,f)$, where $f:\mathbb{R}\rightarrow \mathbb{R}$ is an increasing map with $f(\phi+1)=f(\phi)+1$, and $g\in \Gl2$ is an orientation-preserving linear isomorphism on $\mathbb{R}^2$ such that the induced maps on $S^1=\mathbb{R}/2\mathbb{Z}=\mathbb{R}^2\setminus\{0\}/\mathbb{R}_{>0}$ are the same. Then $(Z,\altP)\cdot(g,f)=(Z',\altP')$, where $Z'=g^{-1}\circ Z$ and $\altP'(\phi)=\altP(f(\phi))$. The group $\Gl2$ acts in a similar way on $\Hom_\mathbb{Z}(\Ka(\altD),\mathbb{C})$ and $\pi:\stab(\altD) \rightarrow \Hom_\mathbb{Z}(\Ka(\altD),\mathbb{C})$ intertwines both actions. The action of $\Aut(\altD)$ on $\stab(\altD)$ commutes with the one of $\Glt$.\\

We will close this section by discussing the historical example of stability introduced by Mumford in \cite{MumfordGIT}. For this we consider the bounded derived category of a smooth projective curve $X$ of genus $g$ with its standard t-structure. It is left to the reader to check that $Z(E)=-\deg(E)+i\rk(E)$ satisfies the properties (i) and (ii) and defines, thus, a central charge on $\Coh(X)$. Finally, we obtain a stability condition $\sigma_{(0)}$. The (semi)stable sheaves of phase $\phi\in (0,1)$ coincide with the (semi)stable vector bundles of slope $-\cot(\phi\pi)$ in the sense of Mumford. E. Macr\`{i} \cite{Macri} has shown  that for $g\ge 1$ the group $\Glt$ acts free and transitive on $\stab(X)$, i.e.\ 
\begin{equation} \label{curves} \stab(X)=\sigma_{(0)}\cdot\Glt\,. \end{equation}
Mumford's notion of stability has been generalized in a similar way to sheaves on smooth projective varieties $X$ of any dimension $d$ and is know as $\mu$-stability. Unfortunately, the function $Z(E)=-\deg(E)+i\rk(E)$ is not a central charge on $\Coh(X)$ for $d\ge 2$ due to the existence of torsion sheaves of degree zero, i.e.\ those whose support has codimension $c\ge 2$. The vanishing of $Z$ on these sheaves violates condition (i) of a central charge. There is an easy way to overcome this difficulty. Instead of taking $\Coh(X)$, one should concider the quotient category of $\Coh(X)$ modulo torsion sheaves supported in codimension $c\ge 2$. This should motivate the next section, where we study this quotient category in detail.
\setcounter{claim3}{\value{section}}

\newpage

\section{Quotients of the derived category}

Let $X$ be an irreducible smooth projective variety of dimension $\dim(X)=d$ over an algebraically closed field $k$. For every $0\le c\le d$ we will consider the quotient of the abelian category of coherent sheaves on $X$ by the full subcategory of sheaves $E$ with $\codim\supp(E)> c$. We aim to show that this quotient category has homological dimension $c$. For further applications we compute the $K$-group of this category and some $\Ext$-groups in the case $c=1$.

\subsection{The different quotient categories}
For this subsection we allow $X$ to be quasi-projective and introduce several full subcategories of $\Coh(X)$ and $\Der(X)=\Der(\Coh(X))$.
\begin{defin}
For any natural number $0\le c \le d+1$ we define  $\Ch{c}(X)$ to be the full subcategory of $\Coh(X)$ consisting of sheaves whose support has codimension $\ge c$. Similarly, we denote by $\Dr{c}(X)$  the full subcategory of $\Der(X)$ consisting of complexes whose support has codimension $\ge c$. Note that the support of a complex is the union of the supports of its cohomology sheaves. 
\end{defin}
In particular, $\Ch{0}(X)=\Coh(X)$ and $\Ch{1}(X)$ is the the category of coherent torsion sheaves, whereas $\Ch{2}(X)$ is the category of torsion sheaves $T$ with $\codim \supp(T)\ge 2$. The category $\Ch{d+1}(X)$ is the full subcategory of $\Coh(X)$ consisting of the zero sheaf. For technical reasons we need to generalize the definitions to the case of $\ho_X$-modules and unbounded complexes in this section. We obtain the categories $\QCh{c}(X)$ and, for example, $\DER^{+,c}(\QCoh(X))$ by requiring that the closure of the support has at least codimension $c$. \\
The following lemma is an easy consequence of the fact that for every short exact sequence
\[0\longrightarrow E' \longrightarrow E \longrightarrow E'' \longrightarrow 0\]
 of $\ho_X$-modules on $X$ the supports satisfy $\supp(E) = \supp(E')\cup \supp(E'')$. 
\begin{lemma}
The categories $\Dr{c}(X)$ resp.\ $\DER^{+,c}(\QCoh(X))$ are thick triangulated subcategories of $\Der(X)$ resp.\ $\DER^+(\QCoh(X))$. Similarly, the categories $\Ch{c}(X)$ resp. $\QCh{c}(X)$ are Serre subcategories of $\Coh(X)$ resp.\ $\QCoh(X)$. 
\end{lemma}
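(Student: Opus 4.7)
The plan is to verify the Serre subcategory claim for the abelian variants first, then bootstrap it to the triangulated variants via the long exact cohomology sequence. The $\QCoh$/unbounded statements follow identically from the bounded/coherent ones once one works with the closure of the support instead of the support itself.

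For $\Ch{c}(X)\subseteq\Coh(X)$, consider a short exact sequence $0\to E'\to E\to E''\to 0$. The inclusions $\supp E',\supp E''\subseteq \supp E$ immediately give closure under subobjects and quotients. For closure under extensions, the support formula stated just before the lemma gives $\supp E=\supp E'\cup \supp E''$, and the elementary fact $\codim(A\cup B)=\min(\codim A,\codim B)$ then yields $\codim\supp E\ge c$ whenever both $E',E''\in\Ch{c}(X)$. Since the zero sheaf has empty support and hence codimension $+\infty$, the category $\Ch{c}(X)$ is a Serre subcategory of $\Coh(X)$. The same argument applied to $\overline{\supp(-)}$ establishes that $\QCh{c}(X)$ is a Serre subcategory of $\QCoh(X)$, using that topological closure commutes with finite unions.

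For the triangulated claim, closure under shifts is immediate from $\supp E[n]=\supp E$, and closure under direct summands is immediate from $\supp E_i\subseteq \supp(E_1\oplus E_2)$. The two-out-of-three property in a distinguished triangle $E'\to E\to E''\to E'[1]$ follows from the long exact cohomology sequence, which for each $i$ presents $\ho^i(E)$ as an extension
\[ 0\longrightarrow A_i \longrightarrow \ho^i(E) \longrightarrow B_i \longrightarrow 0,\]
where $A_i$ is a quotient of $\ho^i(E')$ and $B_i$ is a subobject of $\ho^i(E'')$. Combined with the Serre property of $\Ch{c}(X)$ just established, this shows that if $E'$ and $E''$ lie in $\Dr{c}(X)$ (equivalently, all their cohomology sheaves lie in $\Ch{c}(X)$), then all $\ho^i(E)$ lie in $\Ch{c}(X)$ too, so $E\in\Dr{c}(X)$. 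The remaining two cases are handled by rotating the triangle, which reduces them to the one already treated. The argument for $\DER^{+,c}(\QCoh(X))\subseteq \DER^+(\QCoh(X))$ is verbatim the same.

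There is no real obstacle here: the content of the lemma is entirely concentrated in the single support identity $\supp E=\supp E'\cup \supp E''$ stated in the hint, and the standard trick of reducing distinguished triangles to short exact sequences on cohomology propagates this identity from the abelian level to the triangulated level. The only minor point of care is to use closures of supports in the quasi-coherent setting, but this costs nothing because closure commutes with finite unions.
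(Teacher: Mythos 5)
Your proof is correct and is exactly the argument the paper intends: the lemma is stated there without proof as an immediate consequence of the support identity $\supp(E)=\supp(E')\cup\supp(E'')$, and you have filled in the routine details (Serre property at the abelian level, then the long exact cohomology sequence for the triangulated level). One small caveat for the case of $\DER^{+,c}(\QCoh(X))$: your parenthetical equivalence ``$E$ lies in the subcategory if and only if every $H^i(E)$ lies in $\QCh{c}(X)$'' does not transport verbatim, since the closure of an \emph{infinite} union of sets of codimension $\ge c$ need not have codimension $\ge c$; there you should conclude directly from $\overline{\supp(E)}\subseteq\overline{\supp(E')}\cup\overline{\supp(E'')}$ (a union of only two closed sets) rather than passing through the individual cohomology sheaves.
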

Recall that a full subcategory $\altD'$ of a triangulated category $\altD$ is called thick if $E'\oplus E''\in \altD'$ implies $E',E''\in \altD'$. A full subcategory $\altA'$ of an abelian category $\altA$ is a Serre subcategory if for every short exact sequence 
\[ 0\longrightarrow E' \longrightarrow E \longrightarrow E'' \longrightarrow 0\]
in $\altA$ the object $E$ belongs to $\altA'$ if and only if $E'$ and $E''$ do so. In particular, $\altA'$ is an abelian subcategory.\\
The next propositions are special cases of results of Serre respectively Verdier.
\begin{prop}[\cite{Neeman}, Lemma A.2.3] \label{abelianquotient}
For every $0 \le c \le d$ there is an abelian category $\Cohc(X)$ and an exact functor $P:\Coh(X) \longrightarrow \Cohc(X)$ whose kernel is the subcategory $\Ch{c+1}(X)$ and which is universal among all exact functors $\tilde{P}:\Coh(X) \longrightarrow \altA$ between abelian categories vanishing on $\Ch{c+1}(X)$. The category $\Cohc(X)$ is called the quotient category of $\Coh(X)$ by $\Ch{c+1}(X)$. The objects of $\Cohc(X)$ are those of $\Coh(X)$, and a morphism between two objects $E$ and $F$ is represented by a roof
\[ \xymatrix @C=0.5cm @R=0.5cm { & E' \ar[dl]_s \ar[dr]^f & \\ E & & F\;, } \]
where $s$ and $f$ are morphisms in $\Coh(X)$ with $\ker(s), \coker(s) \in \Ch{c+1}(X)$.
\end{prop}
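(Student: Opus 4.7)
The plan is to carry out the Gabriel--Serre quotient construction for the Serre subcategory $\Ch{c+1}(X)\subset \Coh(X)$. Let $\Sigma$ denote the class of morphisms $s$ in $\Coh(X)$ with $\ker(s),\coker(s)\in \Ch{c+1}(X)$. The first step is to verify that $\Sigma$ admits a bilateral calculus of fractions. Closure under composition follows from the long exact sequence of kernels and cokernels of a composite, combined with the fact that $\Ch{c+1}(X)$ is closed under subobjects, quotients, and extensions (Lemma 3.2). The key Ore condition says that for any $f:G\to E$ and $s:F\to E$ in $\Sigma$, the pullback $F\times_E G$ in $\Coh(X)$ yields a commutative square whose projection $F\times_E G\to G$ has kernel isomorphic to $\ker(s)$ and a cokernel that is a subobject of $\coker(s)$; both belong to $\Ch{c+1}(X)$, so this projection lies in $\Sigma$. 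The dual statement for pushouts is analogous.

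Given the calculus of fractions, define $\Cohc(X)$ to have the same objects as $\Coh(X)$, with morphisms given by equivalence classes of roofs $E\xleftarrow{s}E'\xrightarrow{f}F$, $s\in \Sigma$, where two roofs are identified whenever they are dominated by a third. Composition is defined via pullback, using the Ore condition just verified, and $P:\Coh(X)\to \Cohc(X)$ is the identity on objects, sending $f:E\to F$ to the roof with identity left leg. That the resulting category is $k$-linear, that $P$ is additive, and that $P(s)$ is an isomorphism for each $s\in \Sigma$ are all formal consequences of the Ore conditions.

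To show $\Cohc(X)$ is abelian one observes that every roof is isomorphic, in $\Cohc(X)$, to the image under $P$ of a genuine morphism of $\Coh(X)$ (replace the roof by its right leg, which is legitimate because the left leg became invertible). Kernels and cokernels are then computed by applying $P$ to the corresponding kernels and cokernels in $\Coh(X)$. The image-equals-coimage axiom reduces to the tautology that a morphism in $\Coh(X)$ with kernel and cokernel in $\Ch{c+1}(X)$ becomes an isomorphism after $P$. Exactness of $P$ is automatic, and the identification $\ker(P)=\Ch{c+1}(X)$ is immediate in one direction, since $E\to 0$ lies in $\Sigma$ whenever $E\in \Ch{c+1}(X)$; conversely, if $P(E)\cong 0$ then $\Id_E$ is equal as a roof to $0$, and spelling this out through the equivalence relation forces $E\in \Ch{c+1}(X)$.

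For the universal property, let $\tilde P:\Coh(X)\to \altA$ be an exact functor into an abelian category with $\tilde P(\Ch{c+1}(X))=0$. For each $s\in \Sigma$ the four-term exact sequence $0\to \ker(s)\to F\to E\to \coker(s)\to 0$ shows that $\tilde P(s)$ has zero kernel and cokernel in $\altA$, hence is an isomorphism. One then defines the required factorization on morphisms by $[E\xleftarrow{s}E'\xrightarrow{f}F]\mapsto \tilde P(f)\circ \tilde P(s)^{-1}$; well-definedness on equivalence classes, functoriality, and uniqueness (forced by agreement with $\tilde P$ on trivial roofs) are routine verifications. The only substantive ingredient in the whole argument is establishing the Ore conditions, and this is precisely the place where the Serre-subcategory property of $\Ch{c+1}(X)$ inside $\Coh(X)$ is used.
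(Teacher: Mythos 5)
The paper itself gives no proof of this proposition: it is quoted as a special case of Serre's theorem on quotients by Serre subcategories, with a citation to Neeman's book (Lemma A.2.3). Your proposal reconstructs exactly that standard Gabriel--Serre localization argument, and it is correct: the verification of the Ore condition via the pullback (kernel of the pulled-back map isomorphic to $\ker(s)$, cokernel a subobject of $\coker(s)$) is the only genuinely non-formal input, and you carry it out properly. The one axiom of a bilateral calculus of fractions you do not address explicitly is the cancellation axiom (if $f\circ s=0$ with $s\in\Sigma$ then $t\circ f=0$ for some $t\in\Sigma$); it is not a formal consequence of the Ore condition, but it follows here just as easily --- $f$ then factors through $\coker(s)\in\Ch{c+1}(X)$, so $\im(f)\in\Ch{c+1}(X)$ and one may take $t:F\sur F/\im(f)$ --- and it is needed to make the equivalence relation on roofs and the composition well defined. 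With that one line added, your sketch matches the cited standard proof.
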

\begin{prop}[\cite{Neeman}, Theorem 2.1.8]
For every $0 \le c \le d$ there is a triangulated category $\Derc(X)$ and an exact functor $Q:\Der(X) \longrightarrow \Derc(X)$ whose kernel is the subcategory $\Dr{c+1}(X)$ and which is universal among all exact functors $\tilde{Q}:\Der(X) \longrightarrow \altT$ between triangulated categories vanishing on $\Dr{c+1}(X)$. The category $\Derc(X)$ is called the quotient category of $\Der(X)$ by $\Dr{c+1}(X)$. The objects of $\Derc(X)$ are those of $\Der(X)$, and a morphism between two objects $E$ and $F$ is represented by a roof
\[ \xymatrix @C=0.5cm @R=0.5cm { & E' \ar[dl]_s \ar[dr]^f & \\ E & & F\;, } \]
where $s$ and $f$ are morphisms in $\Der(X)$ with $C(s)\in \Dr{c+1}(X)$.
\end{prop}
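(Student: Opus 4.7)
The plan is to apply Verdier's localization theorem for thick triangulated subcategories, specialized to $\Dr{c+1}(X)\subset\Der(X)$, which was already identified as such in Lemma 3.2. The category $\Derc(X)$ will be obtained by formally inverting the multiplicative system
\[ S=\{s\in\mathrm{Mor}(\Der(X))\mid C(s)\in\Dr{c+1}(X)\}, \]
and the description of morphisms as roofs then comes out automatically from the Gabriel--Zisman calculus of fractions.

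First I would verify that $S$ is a saturated multiplicative system compatible with the triangulation. Closure under composition follows from the octahedral axiom: for composable $s,t\in S$ there is a distinguished triangle $C(s)\to C(ts)\to C(t)$, so thickness of $\Dr{c+1}(X)$ (an extension-closed, shift-closed, summand-closed subcategory by Lemma 3.2) forces $C(ts)\in\Dr{c+1}(X)$. For the left Ore condition, given $s\colon E'\to E$ in $S$ and $f\colon F\to E$, complete $s$ to a triangle $E'\to E\to C(s)\to E'[1]$, then complete the composite $F\to E\to C(s)$ to a triangle, whose base $E''$ fits into a commutative square with $s$ and $f$ and differs from $F$ by a shift of $C(s)$; in particular the resulting $E''\to F$ lies in $S$. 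The two-out-of-three compatibility with distinguished triangles (if two of three morphisms between triangles are in $S$, so is the third) is then the standard triangulated nine-lemma, again relying on thickness.

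With $S$ in hand I would define $\Derc(X)$ to have the same objects as $\Der(X)$, with morphisms the equivalence classes of roofs $E\xleftarrow{s}E'\xrightarrow{f}F$, $s\in S$, two roofs being identified whenever they are dominated by a common third roof, and with composition defined by the Ore condition. The functor $Q$ is the identity on objects and sends $f\colon E\to F$ to the roof with $s=\Id_E$. A triangle in $\Derc(X)$ is declared distinguished exactly when it is isomorphic to the $Q$-image of a distinguished triangle in $\Der(X)$. Checking Verdier's four axioms for this candidate structure is the longest part of the argument; the rotation and morphism axioms come down to lifting roofs to honest morphisms via the Ore conditions, while the octahedral axiom has to be established by a direct diagram chase in $\Der(X)$ after replacing the given roofs by a common refinement.

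The universal property is then formal: if $\tilde Q\colon\Der(X)\to\altT$ is exact and vanishes on $\Dr{c+1}(X)$, then for any $s\in S$ the cone $\tilde Q(C(s))$ is zero in $\altT$, and a morphism in a triangulated category with vanishing cone is an isomorphism, so $\tilde Q(s)$ is invertible. This forces $\tilde Q$ to factor uniquely through $Q$, and exactness of the factorization follows from the definition of distinguished triangles in $\Derc(X)$. The main obstacle throughout is the interplay between the fraction calculus and the triangulation: essentially every verification — associativity of composition, well-definedness of the triangulated structure, and the octahedral axiom in $\Derc(X)$ — reduces to repeatedly using the Ore condition together with the octahedral axiom in $\Der(X)$ to replace a given configuration of roofs by a diagram of genuine morphisms, at which point thickness of $\Dr{c+1}(X)$ does the rest.
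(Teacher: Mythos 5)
Your proposal is correct and is precisely the standard Verdier localization argument; the paper itself offers no proof of this proposition but simply cites Neeman's Theorem 2.1.8, which is exactly the calculus-of-fractions construction you sketch (multiplicative system of morphisms with cone in the thick subcategory, Ore conditions via the octahedral axiom, roofs, and the formal universal property). The only point worth making explicit in your write-up is the identification of the kernel: $Q(E)=0$ forces $E$ to be a direct summand of an object of $\Dr{c+1}(X)$, and it is thickness (Lemma 3.2) that then puts $E$ itself in $\Dr{c+1}(X)$.
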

For the quotient $\Derc(X)$ as well as for $\Cohc(X)$ two roofs 
\[ \xymatrix @C=0.5cm @R=0.5cm { & E' \ar[dl]_s \ar[dr]^f & \\ E & & F } \qquad\mbox {and }\qquad \xymatrix @C=0.5cm @R=0.5cm { & E'' \ar[dl]_t \ar[dr]^g & \\ E & & F } \]
represent the same morphism if there is commutative diagram
\[ \xymatrix @C=0.5cm @R=0.5cm { & & E''' \ar[dl]_u \ar[dr]^v & & \\ & E' \ar[dl]_s \ar[drrr]_f & & E'' \ar[dlll]^t \ar[dr]^g & \\ E & & & & F } \]
with $C(su)\in \Dr{c+1}(X)$ respectively $\ker(su),\coker(su)\in \Ch{c+1}(X)$. \\
Similar quotients exist in the case of $\ho_X$-modules and we denote the quotient categories by $\QCohc(X)$ and $\QDerc$.\\
The inclusion functors $I:\Coh(X)\hookrightarrow \QCoh(X)$ and $\Der(X) \hookrightarrow \DER^+(\QCoh(X))$ induce natural exact functors $I_{(c)}:\Cohc(X)\longrightarrow \QCohc(X)$ and $\Derc(X)\longrightarrow \QDerc$. Furthermore, the functor $\Der(P):\Der(X)\longrightarrow \Der(\Cohc(X))$ factorizes over the quotient functor $Q:\Der(X)\longrightarrow \Derc(X)$ and similarly in the case of $\ho_X$-modules. Indeed, $\Der(P)$ commutes with the cohomology 
functors
\[ \xymatrix @C=3cm @R=1cm { {\Der(X)} \ar[r]^{\Der(P)} \ar[d]_{H^i} & \Der(\Cohc(X)) \ar[d]^{H^i} \\ \Coh(X) \ar[r]^P & \Cohc(X). } \]
For $E\in \Dr{c+1}(X)$ we have $H^i (\Der(P)(E))=P( H^i(E))=0$ for all $i\in \mathbb{Z}$ and, therefore, $\Der(P)(E)=0$. The existence of the functor $T:\Derc(X) \longrightarrow \Der(\Cohc(X))$ follows by the universal property of $Q:\Der(X)\longrightarrow \Derc(X)$. We can summarize all functors in the following commutative diagram
\[ \xymatrix @C=1cm @R=1cm { {\Derc(X)} \ar[rrr] \ar[dd]_T & & & {\QDerc} \ar[dd] \\ & {\Der(X)} \ar[ul]_Q \ar[dl]_{\Der(P)} \ar[r]^(0.4){\Der(I)} & {\DER^{+}(\QCoh(X))} \ar[ur] \ar[dr] & \\ {\Der(\Cohc(X))} \ar[rrr]^{\Der(I_{(c)})} & & & {\DER^{+}(\QCohc(X)).} } \]
\begin{prop} \label{equivcat}
The functors $\Der(I)$ and $\Der(I_{(c)})$ are equivalences of $\Der(X)$ resp.\ $\Der(\Cohc(X))$ with the full subcategories $\DER^b_{coh}(\QCoh(X))$ resp.\ $\DER^b_{coh}(\QCohc(X))$ of bounded complexes with  cohomology sheaves in $\Coh(X)$ resp.\ $\Cohc(X)$. Furthermore, the functor $T$ is an equivalence of categories. Hence
\[ \Derc(X) \cong \Der(\Cohc(X))\cong \DER^b_{coh}(\QCohc(X)) \subset \DER^+(\QCohc(X)).\]
\end{prop}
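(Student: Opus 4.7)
The proposition combines three equivalences. I would treat them in order: the two involving $\Der(I)$ and $\Der(I_{(c)})$ are applications of a standard comparison theorem, while the equivalence $T$ is the genuinely new point.

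For $\Der(I)$ I would cite the classical fact (Huybrechts, \textit{Fourier--Mukai Transforms in Algebraic Geometry}, Proposition~3.5; ultimately SGA~6) that on a Noetherian scheme the natural functor $\Der(X)\to\DER^b_{coh}(\QCoh(X))$ is an equivalence. Its proof exploits that every quasi-coherent sheaf is a filtered colimit of its coherent subsheaves: given $F^\bullet\in\DER^b_{coh}(\QCoh(X))$ concentrated in degrees $[a,b]$, pick a coherent subsheaf $G^b\subset F^b$ surjecting onto $H^b(F^\bullet)$, pull back its preimage in $F^{b-1}$ and enlarge to a coherent subsheaf there, and iterate downwards to produce a quasi-isomorphic coherent subcomplex; full faithfulness is handled analogously. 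For $\Der(I_{(c)})$ I would repeat the same argument after verifying that $\QCohc(X)$ is a Grothendieck category, that $I_{(c)}$ preserves filtered colimits, and that every object of $\QCohc(X)$ is a filtered colimit of objects from the image of $I_{(c)}$. These properties follow from the Gabriel--Serre construction of the quotient together with noetherianness of $\Coh(X)$: the quotient functor $\QCoh(X)\to\QCohc(X)$ is exact and preserves filtered colimits, so the colimit presentation of any quasi-coherent sheaf descends.

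The equivalence $T$ is the main step. Its existence is automatic from the universal property of $Q$, so I must check it is fully faithful and essentially surjective. For this I would invoke the theorem of Miyachi (\textit{Localization of triangulated categories and derived categories}, J.~Algebra 141 (1991), Theorem~3.2): for any Serre subcategory $\altB\subset\altA$ of an abelian category, the canonical functor
\[ \DER^b(\altA)/\DER^b_\altB(\altA) \longrightarrow \DER^b(\altA/\altB) \]
is an equivalence provided every subobject of $P(A)$ in $\altA/\altB$ lifts to a subobject of $A$ in $\altA$. In our setting $(\altA,\altB)=(\Coh(X),\Ch{c+1}(X))$ the lifting is straightforward: a subobject of $P(A)$ is represented by a roof $C\xleftarrow{s}C_1\xrightarrow{f}A$ with $\ker(s),\coker(s),\ker(f)\in\Ch{c+1}(X)$, and one takes $B=\im(f)\subset A$; then $P(B)$ is the desired subobject. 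Since $\Dr{c+1}(X)=\DER^b_{\Ch{c+1}(X)}(\Coh(X))$ by definition, Miyachi's theorem identifies $\Derc(X)$ with $\Der(\Cohc(X))$ via $T$.

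I expect the most delicate points to be the verification in Part~2 that $\QCohc(X)$ behaves sufficiently like a category of quasi-coherent sheaves on a Noetherian scheme to support the truncation-and-approximation argument, and in Part~3 the careful checking of Miyachi's hypothesis beyond the immediate case treated above. Once these are settled, chaining the three equivalences and invoking commutativity of the diagram preceding the proposition yields the displayed string of isomorphisms.
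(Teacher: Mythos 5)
Your decomposition into three steps matches the paper's, and Step 1 is handled identically (citation of Huybrechts, Prop.\ 3.5). The other two steps deserve comment.

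For $\Der(I_{(c)})$ your justification has a real gap: you propose to transfer the Grothendieck property and filtered colimits from $\QCoh(X)$ to $\QCohc(X)$ "from the Gabriel--Serre construction", but $\QCh{c+1}(X)$ is \emph{not} a localizing subcategory of $\QCoh(X)$ --- it is not closed under infinite direct sums (already $\bigoplus_{x}k(x)$ over all closed points has dense support, hence does not lie in $\QCh{c+1}(X)$) --- so the quotient functor has no right adjoint, and neither the Grothendieck property of $\QCohc(X)$ nor preservation of filtered colimits by $P$ comes for free. The paper avoids all of this: it only needs the concrete statement that every epimorphism $G\sur F$ in $\Qcohc(X)$ with $F\in\Cohc(X)$ is dominated by a coherent subsheaf $G'\subseteq G$, which it proves by representing the map by a roof, passing to an open $U$ with $\codim(X\setminus U)>c$ where the classical coherent approximation applies, and extending the resulting coherent subsheaf over $X$; together with the fact that injective $\ho_X$-modules stay injective in $\QCohc(X)$ (proved via the $\Hom_{(c)}$-description of morphisms), this lets the Huybrechts argument go through verbatim. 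Your Step 2 should be repaired along these lines rather than through abstract colimit transfer.

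For the equivalence $T$ your route is genuinely different from the paper's and is legitimate. The paper does not cite Miyachi; Appendix A gives a self-contained proof: every bounded complex over $\Cohc(X)$ is straightened, as a complex, into one coming from $\Coh(X)$ by iteratively replacing differentials by roofs and then dividing out the maximal subcomplex $\torssh(E)$ supported in codimension $>c$ (which exists because $\Coh(X)$ is noetherian), after which fullness and faithfulness are checked by hand. Miyachi's theorem subsumes this special case; the lifting hypothesis you verify is in fact automatic for any Gabriel--Serre quotient, since subobjects of $P(A)$ are exactly the $P(A')$ with $A'\subseteq A$, so there is nothing case-specific to check beyond identifying the induced equivalence with $T$ via the universal property of $Q$. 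The citation buys brevity; the paper's explicit argument buys independence from the literature and a concrete normal form for objects of $\Derc(X)$ (complexes with $\torssh=0$) that it reuses later, and its closing remark that the method fails for $\ho_X$-modules (where $\torssh$ need not exist) explains why the appendix is restricted to the coherent case --- which is all either approach needs.
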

\begin{proof} 
The assertion for $\Der(I)$ is well known (see e.g.\ \cite{HuybFourMuk}, Corollary 3.4 and Proposition 3.5). The case of $\Der(I_{(c)})$ is proved in the same way. For this we introduce the full subcategory $\Qcohc(X)$ of $\QCohc(X)$ whose objects are quasi-coherent sheaves. Every quasi-coherent sheaf $F$ has a resolution 
\[ 0 \longrightarrow F\longrightarrow I^0 \longrightarrow I^1 \longrightarrow I^2 \longrightarrow \ldots \]
by quasi-coherent sheaves $I^i$ which are injective as $\ho_X$-modules (see \cite{Hartshorne66}, II, 7.18). As injective $\ho_X$-modules remain injective in $\QCohc(X)$ (see the end of the proof of the next Lemma), we obtain an equivalence 
\[ \Der(\Qcohc(X)) \cong \DER^b_{qcoh}(\QCohc(X)) \subset  \DER^+(\QCohc(X))\]
of $\Der(\Qcohc(X))$ with the full subcategory $\DER^b_{qcoh}(\QCohc(X))$ of bounded complexes with quasi-coherent cohomology sheaves (cf.\ \cite{HuybFourMuk}, Proposition 2.42). It remains to show that the category $\Der(\Cohc(X))$ is equivalent to the full subcategory $\DER^b_{coh}(\Qcohc(X))$ of $\Der(\Qcohc(X))$ consisting of bounded complexes with coherent cohomology sheaves. The functor is induced by the inclusion $\Cohc(X)\hookrightarrow \Qcohc(X)$. The proof of Proposition 3.5 in \cite{HuybFourMuk} applies literally to this case. Indeed, for every epimorphism $f:G\sur F$ in $\Qcohc(X)$ with $F\in \Cohc(X)$ there is a coherent subsheaf $G'\subseteq G$ such that the restriction $f:G' \longrightarrow F$ remains an epimorphism. For this, we represent $f$ by a roof
\[ \roof{G}{E}{F}{s}{\tilde{f}} \]
with $\ker(s),\coker(s),\coker(\tilde{f})\in \QCh{c+1}(X)$. Thus, $\tilde{f}$ defines a surjective sheaf homomorphism $\hat{f}:G|_U \sur F|_U$ on some open subset $U\subset X$ with $\codim(X\setminus U)>c$. Let $\hat{G}\subset G|_U$ be a coherent subsheaf of $G|_U$ such that $\hat{f}:\hat{G} \rightarrow F|_U$ is still surjective (cf.\ \cite{HuybFourMuk}, Prop. 3.5). Denote by $G'\subset G$ a coherent subsheaf extending $\hat{G}$ (cf. \cite{Hartshorne}, II, Exec. 5.15).  The roof
\[ \roof{G'}{s^{-1}(G')}{F}{s}{\tilde{f}}\]
represents the required restriction. The rest of the argument is the same as in \cite{HuybFourMuk}. \\
The proof that $T$ is an equivalence is more involved. As the assertion is not used in the sequel, we will skip the proof. The interested reader will find it in Appendix A.
\end{proof}
For two $\ho_X$-modules $F$ and $G$ we introduce the following $k$-vector space
\[ \HOM_{(c)}(F,G)\mathrel{\mathop{:}}=\varinjlim_{\codim(X\setminus U)>c} \Hom(F|_U,G|_U), \]
where the limes is taken over the directed set of open subsets $U\subset X$ with $\codim(X\setminus U)>c$. We leave it to the reader to check that $\HOM_{(c)}(-,-)$ is indeed a functor from $\QCoh(X)^{op}\times \QCoh(X)$ into the category of $k$-vector spaces. Moreover, it is left exact in the second variable. Thus, we can construct for a fixed $\ho_X$-module $F$ the right derived functors 
\[ R^i\HOM_{(c)}(F,-)=\mathrel{\mathop{:}}\EXT^i_{(c)}(F,-)\] 
of the left exact functor $\HOM_{(c)}(F,-)$ on $\DER^+(\QCoh(X))$ since $\QCoh(X)$ has enough injective objects. Furthermore, every morphism $F\longrightarrow F'$ induces functor morphisms $\EXT^i_{(c)}(F',-) \longrightarrow \EXT^i_{(c)}(F,-)$ compatible with the long exact sequences associated to a short exact sequence $0\longrightarrow G' \longrightarrow G \longrightarrow G'' \longrightarrow 0$. The following lemma shows the importance of these functors.
\begin{lemma}
For $F',G'\in \QCohc(X)$ and $i\in \mathbb{N}$ we introduce the following shorthand
\[ \Ext^i_{(c)}(F',G')\mathrel{\mathop{:}}=\Hom_{\DER^+(\QCohc(X))}(F',G'[i]). \]
Then, there are functor isomorphisms $\EXT^i_{(c)}(F,-)\cong \Ext^i_{(c)}(P(F),P(-))$, natural in $F\in \QCoh(X)$, where $P:\QCoh(X)\longrightarrow \QCohc(X)$ is the quotient functor which maps $G\in \QCoh(X)$ to $G\in \QCohc(X)$. 
\end{lemma}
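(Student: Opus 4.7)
The plan is to reduce the assertion to the case $i=0$ via a universal $\delta$-functor argument. By construction $\EXT^i_{(c)}(F,-)=R^i\HOM_{(c)}(F,-)$ is the universal cohomological $\delta$-functor extending $\HOM_{(c)}(F,-)$ on $\QCoh(X)$. Since the quotient functor $P$ is exact, any short exact sequence in $\QCoh(X)$ induces a short exact sequence in $\QCohc(X)$, so $G\mapsto \Ext^i_{(c)}(P(F),P(G))$ is likewise a cohomological $\delta$-functor. It therefore suffices to verify \textbf{(a)} that the two $\delta$-functors agree at $i=0$, and \textbf{(b)} that the right-hand side is effaceable, i.e.\ vanishes for $i>0$ on every injective $I\in\QCoh(X)$.

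For \textbf{(a)} I would identify $\HOM_{(c)}(F,G)$ with $\Hom_{\QCohc(X)}(P(F),P(G))$ via the roof description of the quotient (the $\QCoh$-analogue of Proposition 3.3). A quasi-coherent sheaf lies in $\QCh{c+1}(X)$ precisely when it vanishes on some open $U\subset X$ with $\codim(X\setminus U)>c$, so a roof $F\xleftarrow{s}E\xrightarrow{f}G$ with $\ker(s),\coker(s)\in\QCh{c+1}(X)$ is exactly one for which $s$ is an isomorphism on such a $U$; the roof then restricts to $F|_U\xrightarrow{\sim}E|_U\xrightarrow{f|_U}G|_U$, giving an element of the colimit $\HOM_{(c)}(F,G)$. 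Conversely, any $\phi:F|_U\to G|_U$ is realised by the fibre product $E:=F\times_{j_\ast(G|_U)}G$ in $\QCoh(X)$ (where $j:U\hookrightarrow X$, and $j_\ast(G|_U)$ is quasi-coherent since $X$ is noetherian), which restricts on $U$ to the graph of $\phi$ and maps isomorphically onto $F|_U$. Compatibility with the equivalence of roofs and with shrinking $U$ promotes this to a bijection, functorial in both arguments.

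The main obstacle is \textbf{(b)}, which I would deduce from the claim that $P$ sends injective quasi-coherent sheaves to injective objects of $\QCohc(X)$. Given a monomorphism $\iota:P(F')\hookrightarrow P(F)$ in $\QCohc(X)$ and a morphism $\alpha:P(F')\to P(I)$, step \textbf{(a)} represents them by honest sheaf maps $\phi:F'|_U\to F|_U$ and $\psi:F'|_U\to I|_U$ for some $U$ with $\codim(X\setminus U)>c$; shrinking $U$ if necessary one may assume $\phi$ is an honest injection, since its kernel lies in $\QCh{c+1}(X)$ and therefore dies on a smaller open. I would then invoke the classical fact (cf.\ Hartshorne, \emph{Residues and Duality}, II.7) that the restriction of an injective quasi-coherent sheaf to an open subscheme remains injective in $\QCoh(U)$. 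Consequently $\psi$ extends along $\phi$ to some $F|_U\to I|_U$, and reading this element of $\HOM_{(c)}(F,I)$ back through \textbf{(a)} produces the required lift $P(F)\to P(I)$.

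Putting the pieces together, I would choose an injective resolution $G\to I^\bullet$ in $\QCoh(X)$; by \textbf{(b)} and the exactness of $P$, the complex $P(G)\to P(I^\bullet)$ is an injective resolution in $\QCohc(X)$. Thus $\Ext^i_{(c)}(P(F),P(G))$ is the $i$-th cohomology of $\Hom_{\QCohc(X)}(P(F),P(I^\bullet))$, which by \textbf{(a)} equals $H^i\HOM_{(c)}(F,I^\bullet)=\EXT^i_{(c)}(F,G)$. Naturality in $F$ is visibly inherited from the roof-level identification in \textbf{(a)}, so the resulting comparison is an isomorphism of $\delta$-functors natural in both variables.
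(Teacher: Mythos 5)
Your proposal is correct and follows essentially the same route as the paper: the identification at $i=0$ via roofs (your fibre product $F\times_{j_\ast(G|_U)}G$ is literally the paper's canonical choice $\rho^{-1}(j_\ast\Gamma_f)$), the key fact that $P$ sends injectives to injectives via the restriction-of-injectives lemma, and the conclusion by comparing $\delta$-functors — the paper invokes uniqueness of universal $\delta$-functors where you compute directly with the image resolution $P(I^\bullet)$, which is an equivalent repackaging. The only caveat is notational: in this paper $\QCoh(X)$ denotes the category $\altM od(X)$ of all $\ho_X$-modules, so the relevant injectivity statement is Hartshorne III, Lemma 6.1 rather than its quasi-coherent analogue, but your argument goes through verbatim in that setting.
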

\begin{proof}
Let us start by considering the case $i=0$. An element of $\HOM_{(c)}(F,G)$ is represented by a sheaf homomorphism $f:F|_U\longrightarrow G|_U$ with $U\subset X$ open and $\codim(X\setminus U)>c$. Take a sheaf $E\subset F\oplus G$ on $X$ with $E|_U=\Gamma_f$, where $\Gamma_f\subset F|_U\oplus G|_U$ denotes the graph of $f$.\footnote{There is a canonical choice given by $E=\rho^{-1}(j_\ast \Gamma_f)$ with $j_\ast:U \hookrightarrow X$ and $\rho:F\oplus G \longrightarrow j_\ast(F|_U\oplus G|_U)$.} We associate to the element represented by $f$ the homomorphism $\phi\in \Hom_{\QCohc(X)}(P(F),P(G))$ represented by the roof
\[ \roof{F}{E}{G.}{pr_1}{pr_2}\]
The morphism $\phi$ is independent of the choice of $f$ and $E$. To see this, let us consider another choice $f':F|_{U'}\longrightarrow G|_{U'}$ and $E'\subset F\oplus G$ with $E'|_{U'}=\Gamma_{f'}$. Since $f$ and $f'$ represent the same element in $\HOM_{(c)}(F,G)$, there is an open subset $V\subset U\cap U'$ with $\codim(X\setminus V)>c$ and $f|_V=f'|_V$. We take a subsheaf $E''$ of $E\cap E' \subset F\oplus G$ extending $\Gamma_f|_V=\Gamma_{f'}|_V$. With the inclusion maps $\imath:E''\hookrightarrow E$ and $\imath':E''\hookrightarrow E'$ we get the following commutative diagram
\[ \xymatrix @C=0.5cm @R=0.5cm { & & E'' \ar[dl]_\imath \ar[dr]^{\imath'} & & \\ & E \ar[dl]_{pr_1} \ar[drrr]_{pr_2} & & E' \ar[dlll]^{pr_1} \ar[dr]^{pr_2} & \\ F & & & & G } \]
which shows the equivalence of the two roofs. There is an inverse of this construction given as follows. Take some $\phi\in \Hom_{\QCohc(X)}(P(F),P(G))$ and represent it by some roof
\[ \roof{F}{E}{G.}{t}{g} \]
We associate to $\phi$ the element of $\HOM_{(c)}(F,G)$ represented by $f=g|_U\circ (t|_U)^{-1}:F|_U \longrightarrow G|_U$, where $U$ is the complement of $\supp(\ker(t))\cup \supp(\coker(t))$. It is left to the reader to show that this element is independent of the choice of the roof, that these bijections are additive and that they form a natural transformation between $\HOM_{(c)}(-,-)$ and $\Hom_{\QCohc(X)}(P(-),P(-))$. \\
As $P:\QCoh(X)\longrightarrow \QCohc(X)$ is exact, both sequences of functors are $\delta$-functors and the sequence $\EXT^i_{(c)}(F,-)$ is universal by construction. Note that for every injective $\ho_X$-module $I$ on $X$ the sheaf $I|_U$ is injective in $\QCoh(U)$ for every open subset $U\subset X$ (\cite{Hartshorne}, III, Lemma 6.1). Using the first part of the proof, we see that $P(I)$ is injective in $\QCohc(X)$ and, therefore, $\Ext^i_{(c)}(P(F),P(I))=0$ for all $i>0$. Since every $\ho_X$-module is a subsheaf of an injective $\ho_X$-module, the functors $\Ext^i_{(c)}(P(F),P(-))$ are effaceable for all $i>0$. Hence, they are universal (see \cite{Grothendieck57}, II, 2.2.1). As universal $\delta$-functors are unique up to isomorphism, the assertion follows from the first part of the proof.    
\end{proof}
Note that the functor 
\[\varinjlim_{\codim(X\setminus U)>c}\] 
is exact since the index set is directed. Using this and the exactness of the restriction functors $|_U$, a simple analysis of the construction of the derived functors $\EXT^i_{(c)}$ reveals 
\[ \EXT^i_{(c)}(F,G) \cong \varinjlim_{\codim(X\setminus U)>c} \Ext^i(F|_U,G|_U). \]
Combining this with the lemma yields the following corollary.
\begin{cor} \label{Ext}
With the shorthands of the last lemma we obtain for all $F,G\in \QCoh(X)$ and $i\in \mathbb{N}$ natural isomorphisms
\[ \Ext^i_{(c)}(F,G)  \cong  \varinjlim_{\codim(X\setminus U)>c} \Ext^i(F|_U,G|_U),\]
where we suppressed $P$ from the notion. In particular,
\[ \Hom_{(c)}(F,G) \cong \varinjlim_{\codim(X\setminus U)>c} \Hom(F|_U,G|_U).\]
\end{cor}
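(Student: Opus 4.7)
The plan is to identify the corollary as a direct combination of the preceding lemma with the observation, stated just above the corollary, that $\EXT^i_{(c)}(F,G)$ is itself a direct limit of ordinary $\Ext$-groups over shrinking open subsets. So the work splits naturally into two steps: first justify that observation carefully, then invoke the lemma.

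For the first step I would pick an injective resolution $G \to I^\bullet$ in $\QCoh(X)$, which exists because $\QCoh(X)$ has enough injectives. Applying $\HOM_{(c)}(F,-)$ gives a complex of $k$-vector spaces whose cohomology computes $\EXT^i_{(c)}(F,G)$ by definition. By the definition of $\HOM_{(c)}$, this complex is
\[ \HOM_{(c)}(F,I^\bullet) \;=\; \varinjlim_{\codim(X\setminus U)>c} \Hom(F|_U, I^\bullet|_U). \]
Since the indexing set is directed, the filtered colimit is exact and therefore commutes with taking cohomology, so
\[ \EXT^i_{(c)}(F,G) \;\cong\; \varinjlim_{\codim(X\setminus U)>c} H^i\bigl(\Hom(F|_U, I^\bullet|_U)\bigr). \]
The key input now is that for every open $U \subset X$ the restriction $I^j|_U$ remains injective in $\QCoh(U)$ (Hartshorne III, Lemma 6.1), so $I^\bullet|_U$ is an injective resolution of $G|_U$ and the inner cohomology group is exactly $\Ext^i(F|_U, G|_U)$.

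The second step is to feed this into the previous lemma, which produces a natural isomorphism $\EXT^i_{(c)}(F,-) \cong \Ext^i_{(c)}(P(F),P(-))$. Substituting yields
\[ \Ext^i_{(c)}(P(F),P(G)) \;\cong\; \varinjlim_{\codim(X\setminus U)>c} \Ext^i(F|_U, G|_U), \]
which is the claimed formula once one suppresses $P$ from the notation, as in the statement. The $i=0$ case is already the identity of definition, recovering the second displayed isomorphism.

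The only genuine subtlety is the interchange of filtered colimits with cohomology and the preservation of injectivity under restriction; both are standard but deserve to be spelled out as above. Beyond this the argument is formal.
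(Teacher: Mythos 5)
Your proof is correct and follows the same route as the paper: the authors also obtain $\EXT^i_{(c)}(F,G)\cong\varinjlim\Ext^i(F|_U,G|_U)$ from the exactness of the filtered colimit and of restriction (via a ``simple analysis of the construction of the derived functors'', which is precisely your injective-resolution argument using that $I^j|_U$ stays injective), and then combine this with the preceding lemma. You merely spell out the details that the paper leaves implicit.
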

\newpage
\textbf{Remarks.} 
\begin{itemize} 
\item We can use the latter isomorphisms to define the categories $\QCohc(X)$ and $\Cohc(X)$ such that the homomorphism groups are given by the expression on the right hand side of the equation. Composition is defined by composing representing $\ho_X$-module homomorphisms.
\item Although the left hand side of the equations is naturally defined on the quotient category $\QCohc(X)$, the right hand side is not as $\Ext^i$ makes no sense on $\QCohc(U)$. Hence, the equations are isomorphisms of functors on $\QCoh(X)$. 
\end{itemize}
There is a nice interpretation of Corollary \ref{Ext} in terms of birational geometry. For this we define a subcategory $\Var^c$ of the category $\Var$ of smooth irreducible quasi-projective varieties over a fixed field $k$. Objects of $\Var^c$ are smooth irreducible varieties and a morphism $f:X\longrightarrow Y$ belongs to $\Var^c$ if  $\codim(f^{-1}(Z))>c$ for any  closed subset $Z\subset Y$ with $\codim(Z)>c$. Note that this is an empty condition for $c=0$ or $c>\dim(X)$. Let us denote by $O^c$ the class of open embeddings $U\hookrightarrow X$ such that $\codim(X\setminus U)>c$. The class of morphisms $O^c$ is localizing in $\Var^c$ and we denote by $\Var_{(c)}$ the localization of $\Var^c$ with respect to $O^c$. Thus, morphisms in $\Var_{(c)}$ are represented by roofs
\[ \roof{X}{U}{Y,}{i}{f}\]
where $i:U\hookrightarrow X$ is an open embedding with $\codim(X\setminus U)>c$ and $f$ is a morphism in $\Var^c$. In other words, morphisms in $\Var_{(c)}$ are rational maps $f:X\dashrightarrow Y$ defined in codimension $c$ such that $\codim(\overline{f^{-1}(Z)})>c$ for any closed $Z\subset Y$ with $\codim(Z)>c$. In particular, $\Var_{(0)}$ is the category of birational maps. Moreover, the full category $\Var_c\subset \Var$ of smooth varieties $X$ of dimension $\dim(X)\le c$ is also a full subcategory of $\Var_{(c)}$, i.e.\ 
\[ \Var_c \subset \Var_{(c)} \subset \Var_{(0)}.\]
Two varieties $X$ and $Y$ are isomorphic in $\Var_{(c)}$ if there is a birational map $f:X\dashrightarrow Y$ which is an isomorphism in codimension $c$. Note that if $\dim(X)=\dim(Y)\le c+1$ this implies $X\cong Y$ as varieties.\\
With respect to the (derived) pullback, the bounded derived category $\Der(-)$ is a contravariant functor on $\Var$ with values in the category of essentially small triangulated categories. This functor does not descent to a functor on $\Var_{(c)}$, but there are two natural ways to handle this problem. The first approach uses the quotient categories $\Derc(-)$ to define a contravariant functor on $\Var_{(c)}$. Given a morphism $f:X\dashrightarrow Y$ represented by a roof as above, we define $f^\ast:\Derc(Y) \longrightarrow \Derc(X)$ as follows. For any complex $E\in \Derc(Y)$ the usual derived pullback $Lf^\ast(E)$ is a complex of coherent sheaves on the open subset $U\subset X$. The direct image $i_\ast( Lf^\ast(E))$ is a complex of quasi-coherent sheaves on $X$. By standard arguments this complex contains a subcomplex $F\subset i_\ast( Lf^\ast(E))$ of coherent sheaves such that $F|_U=Lf^\ast(E)$. Considered as an object of $\Derc(X)$ this complex $F$ is (up to isomorphism) independent of the chosen roof representing the rational map $f:X \dashrightarrow Y$ and the choice of the subcomplex $F$. Using this, one can construct a functor $f^\ast:\Derc(Y) \longrightarrow \Derc(X)$ with $F=f^\ast(E)$.\\ 
Another way to define a contravariant functor on $\Var_{(c)}$ into the category of essentially small triangulated categories is to consider the direct limit category
\[ \altD_{(c)}(X)\mathrel{\mathop{:}}= \varinjlim_{\codim(X\setminus U)>c} \Der(U),\]
where the limit is taken over the directed set of open subsets $U$ of $X$ with $\codim(X\setminus U)>c$. Here we use the natural restriction functors $i^\ast:\Der(U)\longrightarrow \Der(V)$ for $i:V\hookrightarrow U$ to form a direct system of triangulated categories. It is easy to see that $\altD_{(c)}(-)$ defines a covariant functor on $\Var_{(c)}$. There is a natural transformation $\varepsilon:\Derc(-)\longrightarrow \altD_{(c)}(-)$ induced by the natural functor $\Der(X)\longrightarrow \altD_{(c)}(X)$. 
\begin{prop}
The natural transformation $\varepsilon_X:\Derc(X)\longrightarrow \altD_{(c)}(X)$ is an equivalence of triangulated categories for any smooth irreducible quasi-projective variety X.
\end{prop}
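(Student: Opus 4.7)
The plan is to identify both sides of $\varepsilon_X$ with the Verdier quotient $\Der(X)/\Dr{c+1}(X)$, and then invoke a formal lemma about directed colimits of Verdier quotients. First, the natural functor $\Der(X) \to \altD_{(c)}(X)$ kills any complex $E \in \Dr{c+1}(X)$: setting $U = X \setminus \supp(E)$ produces an open subset with $\codim(X \setminus U) > c$ on which $E$ restricts to zero, so $E$ becomes zero in the colimit. By the universal property of the Verdier quotient, this functor factors uniquely through $\Derc(X)$, producing $\varepsilon_X$.

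For each closed $Z \subset X$ with $\codim Z > c$ and open complement $U = X \setminus Z$, I would establish the localization equivalence
$$\Der(X)/\Der_Z(X) \xrightarrow{\sim} \Der(U),$$
where $\Der_Z(X) \subset \Der(X)$ is the thick subcategory of complexes supported on $Z$. Essential surjectivity rests on the fact that every bounded complex of coherent sheaves on $U$ extends to one on $X$, by pushing forward along $j : U \hookrightarrow X$ and extracting a coherent subcomplex of $j_\ast(\cdot)$ restricting to the original, via the same extension technique used in the proof of Proposition \ref{equivcat} (ultimately Hartshorne II, Ex.\ 5.15). Full faithfulness is the roof calculus: a morphism in $\Der(X)/\Der_Z(X)$ is represented by $E \xleftarrow{s} E' \xrightarrow{f} F$ with $C(s)$ supported on $Z$, and restriction to $U$ makes $s|_U$ an isomorphism, yielding $f|_U \circ (s|_U)^{-1}$; conversely, any morphism in $\Der(U)$ is represented by a roof on $U$ that extends to a roof on $X$ by the same coherent-extension procedure.

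Next, I would observe that $\Dr{c+1}(X)$ is the directed union $\bigcup_U \Der_{X\setminus U}(X)$ over open $U$ with $\codim(X\setminus U) > c$, since any $E$ with $\codim \supp(E) > c$ is supported on the closure of its support. Then a purely formal fact applies: for any triangulated category $\altT$ and directed family of thick subcategories $\{\altS_i\}_i$ with union $\altS$, the canonical comparison
$$\varinjlim_i \altT/\altS_i \longrightarrow \altT/\altS$$
is an equivalence. This is immediate from the calculus of fractions: every defining roof in $\altT/\altS$ has cone lying in some $\altS_i$, and two roofs already agree in $\altT/\altS_i$ for sufficiently large $i$ whenever they agree in $\altT/\altS$.

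Chaining these identifications gives
$$\altD_{(c)}(X) = \varinjlim_U \Der(U) \simeq \varinjlim_U \Der(X)/\Der_{X\setminus U}(X) \simeq \Der(X)/\Dr{c+1}(X) = \Derc(X),$$
and one checks that the composite equivalence coincides with $\varepsilon_X$. The main technical obstacle is the localization equivalence $\Der(U) \simeq \Der(X)/\Der_Z(X)$, whose essential surjectivity and extension of morphisms both rely on the coherent-extension machinery; once that is in place, the passage to the colimit is formal.
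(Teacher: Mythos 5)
Your argument is correct, but it takes a genuinely different route from the paper's. The paper proves the equivalence by a direct computation of morphism groups: it reduces the claim $\Hom_{\Derc(X)}(F,G)\cong \varinjlim_U \Hom_{\Der(U)}(F|_U,G|_U)$ to the case of sheaves by induction on the length of the complexes (cohomology filtration plus the five lemma), settles the sheaf case by Corollary \ref{Ext} (the $\delta$-functor identification $\Ext^i_{(c)}(F,G)\cong\varinjlim_U\Ext^i(F|_U,G|_U)$ already in hand), and gets essential surjectivity from the coherent-extension trick. You instead interpolate the classical one-open-set localization $\Der(X)/\DER^b_Z(X)\simeq\Der(U)$ for each closed $Z=X\setminus U$ of codimension $>c$, observe that $\Dr{c+1}(X)=\bigcup_Z\DER^b_Z(X)$ is a directed union of thick subcategories, and then invoke the purely formal fact that Verdier quotients commute with such directed unions. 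What your route buys is modularity and a transparent colimit structure; what it costs is that all the analytic content is concentrated in the localization theorem for a single $U$, which is essentially as hard as what the paper does. In particular, your sketch of that theorem covers essential surjectivity and fullness (both via coherent extension, and note that for fullness one must extend not just the middle object of a roof but both legs, keeping the cone of the left leg supported on $Z$), but says nothing about faithfulness --- that a roof which becomes zero on $U$ is already zero in $\Der(X)/\DER^b_Z(X)$; this is true and classical for Noetherian schemes, so citing it is legitimate, but it is not supplied by the roof calculus alone. Two smaller points a careful write-up should address: the equivalences $\Der(X)/\DER^b_{X\setminus U}(X)\simeq\Der(U)$ must be checked to be compatible (up to coherent natural isomorphism) with the transition functors of the two directed systems --- further localization on one side, restriction on the other --- before the colimits can be identified; and one should verify that the composite equivalence agrees with $\varepsilon_X$, which follows from the universal property of $Q:\Der(X)\to\Derc(X)$ since both functors are induced by the same functor $\Der(X)\to\altD_{(c)}(X)$.
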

\begin{proof}
We have to show that $\varepsilon_X$ induces an isomorphism 
\[\Hom_{(c)}(F,G)\cong \varinjlim_{\codim(X\setminus U)>c} \Hom_{\Der(U)}(F|_U,G|_U) \quad\mbox{for any } F,G\in \Der(X).\] 
Using the long exact $\Hom$-sequences associated to the cohomology filtration of $F$ and $G$ as well as the five-lemma, we can prove this by induction on the length of the complexes $F$ and $G$. For the initial case of length one we use Corollary \ref{Ext}. It remains to show that every object in $\altD_{(c)}(X)$ is isomorphic to some complex of coherent sheaves on $X$. Any object of $\altD_{(c)}(X)$ is represented by some complex $F$ of coherent sheaves on an open subset $i:U\hookrightarrow X$. As above, the direct image $i_\ast( F)$ contains a subcomplex $F'$ of coherent sheaves with $F'|_U\cong F$. As an object in $\Derc(X)$, $F'$ is independent (up to isomorphism) of the choice of the complex $F$ and the choice of the subcomplex in $i_\ast(F)$.
\end{proof}
Note that every bounded derived category appearing in the definition of $\altD_{(c)}(X)$ has homological dimension $\dim(X)$. We will show in the next subsection that the homological dimension of the limit category $\altD_{(c)}(X)$ is $c$ and, thus, independent of the dimension of $X$. Moreover, we will prove in Section 5 that for $c\le 1$ the triangulated categories $\altD_{(c)}(X)\cong \Derc(X)$ contain enough information to classify $X$ as an object in $\Var_{(c)}$.

\subsection{Properties of the quotient category}

Let $X$ be an irreducible smooth projective variety. This subsection contains the proofs that $\Cohc(X)$ is noetherian and has homological dimension $c$. Before going into details we will make some remarks about the $\Ka$-group of $\Cohc(X)$. If $\altA$ is a Serre subcategory of an abelian category $\altB$, we can construct the quotient category $\altB/\altA$ and there is the following exact sequence of $\Ka$-groups (cf. \cite{Weibel}, II, Theorem 6.4)
\[ \Ka(\altA) \longrightarrow \Ka(\altB) \longrightarrow \Ka(\altB/\altA) \longrightarrow 0.\]
The images $F^c\Ka(X)$ of $\Ka(\Ch{c}(X))$ in $\Ka(X)=\Ka(\Coh(X))$ form a descending filtration, called the topological filtration. Hence, 
\[ \Ka(\Cohc(X))= \Ka(X)/F^{c+1}\Ka(X).\] 
The Chern character induces a natural isomorphism 
\[ \Ka(\Cohc(X))_{\mathbb{Q}}\cong  \bigoplus_{i\le c} A^i(X)_\mathbb{Q},\]
where $A^i(X)_\mathbb{Q}=A^i(X)\otimes \mathbb{Q}$ is the $i$-th rational Chow-group of $X$. See \cite{Murre} for more details.
\begin{prop}
For every $0\le c\le d=\dim(X)$ the abelian category $\Cohc(X)$ is noetherian. 
\end{prop}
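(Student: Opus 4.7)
The plan is to deduce noetherianity of $\Cohc(X)$ from the classical fact that $\Coh(X)$ is a noetherian abelian category (which holds since $X$ is a noetherian scheme). The underlying principle is that Serre quotients of noetherian abelian categories remain noetherian; by Lemma 3.2, $\Ch{c+1}(X)$ is a Serre subcategory of $\Coh(X)$, so it suffices to verify this general categorical statement in our situation.

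Given an ascending chain $F_1 \subseteq F_2 \subseteq \ldots$ of subobjects of some $P(F) \in \Cohc(X)$, the first step is to lift each $F_i$ to an honest coherent subsheaf of $F$. By Proposition \ref{abelianquotient}, every object of $\Cohc(X)$ has the form $P(H)$ with $H \in \Coh(X)$, so $F_i = P(H_i)$ for some coherent $H_i$, and the inclusion $F_i \hookrightarrow P(F)$ is represented by a roof
\[ \roof{H_i}{E_i}{F}{s_i}{f_i} \]
with $\ker(s_i), \coker(s_i) \in \Ch{c+1}(X)$. Since $P(s_i)$ is then an isomorphism in $\Cohc(X)$, the morphism $P(f_i)$ agrees with the given inclusion after the identification $F_i \cong P(E_i)$, and in particular it is a monomorphism with image $F_i$. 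Setting $G_i \mathrel{\mathop{:}}= \im(f_i) \subseteq F$ inside $\Coh(X)$ and invoking exactness of $P$ yields $P(G_i) = F_i$ as subobjects of $P(F)$.

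The coherent subsheaves $G_i \subseteq F$ need not form an ascending chain, so I would replace them with their partial sums $G'_i \mathrel{\mathop{:}}= \sum_{j \le i} G_j \subseteq F$. Exactness of $P$ preserves sums of subobjects, so $P(G'_i) = \sum_{j \le i} P(G_j) = \sum_{j \le i} F_j = F_i$, where the last equality uses $F_j \subseteq F_i$ for $j \le i$. The chain $G'_1 \subseteq G'_2 \subseteq \ldots$ lies inside the noetherian coherent sheaf $F$ and therefore stabilizes, which forces the original chain $F_i$ in $\Cohc(X)$ to stabilize as well.

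The main obstacle is the lifting step in the second paragraph: one must carefully translate an abstract subobject $F_i$ of $P(F)$ in the quotient category into a concrete coherent subsheaf of $F$, which requires the roof description of morphisms together with exactness of $P$. Once this translation is in place, the noetherian property is a purely formal consequence of noetherianity of $\Coh(X)$, independent of $c$ and of any further geometric input, which is why the statement holds uniformly in the range $0 \le c \le d$.
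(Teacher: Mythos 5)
Your proof is correct, but it takes a genuinely different route from the paper. You prove the general categorical fact that a Serre quotient of a noetherian abelian category is noetherian: every subobject of $P(F)$ in $\Cohc(X)$ is of the form $P(G)$ for a coherent subsheaf $G\subseteq F$ (via the roof description and exactness of $P$), an ascending chain downstairs is lifted termwise, and the partial sums $G'_i=\sum_{j\le i}G_j$ repair the failure of the lifts to be nested while still satisfying $P(G'_i)=F_i$ because $P$ preserves images and hence sums of subobjects; the chain $G'_i$ then stabilizes inside the noetherian object $F$. This is the classical argument going back to Gabriel, and it is shorter, uniform in $c$, and uses no geometry beyond the noetherianity of $\Coh(X)$. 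The paper instead works with the equivalent formulation via descending chains of quotient objects $E=E_0\sur E_1\sur\ldots$ and argues geometrically: it fixes an ample divisor, uses that the top Hilbert coefficients $\alpha_i$ for $d-c\le i\le d$ descend to additive $\mathbb{N}$-valued functions on $\Cohc(X)$, splits each $E_j$ by its torsion filtration $\torssh(E_j)\subset E_j$, and runs an induction on $\dim\supp(E)$, using purity of $E_j/\torssh(E_j)$ to kill the kernels $\ker(g_j)$ once $\alpha_e$ has stabilized. The paper's proof is considerably more involved but keeps the argument internal to the sheaf-theoretic description of $\Cohc(X)$ and exhibits the numerical invariants $\alpha_i$ that control the category; your proof buys brevity and generality, and makes transparent why the statement holds for every $c$ at once. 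Both are valid; you might just make explicit the (easy) verification that a morphism in $\Cohc(X)$ represented by a roof with $P(s_i)$ invertible has image $P(\im f_i)$, which is the one point where the roof calculus genuinely enters.
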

\begin{proof}
If we fix some ample divisor $H$ on $X$, we can define the Hilbert polynomial 
\[ P(E,m)= \sum_{i=0}^d \alpha_i(E)\frac{m^i}{i!} \] 
for any coherent sheaf $E$ by $P(E,m)\mathrel{\mathop{:}}=\chi(X,E\otimes\ho_X(mH))$. The coefficients $\alpha_i(E)$ have the following properties (cf.\  \cite{HuybLehn}, Lemma 1.2.1)
\begin{enumerate}
 \item $\alpha_i(E)\in \mathbb{Z}$ for all $0\le i\le d$,
 \item $\alpha_i(E)=0$ for all $i>\dim(E)\mathrel{\mathop{:}}=\dim \supp(E)$ and $\alpha_{\dim(E)}(E)>0$,
 \item the function $E\longmapsto \alpha_i(E)$ is additive on $\Coh(X)$ for all $0\le i\le d$,
 \item the additive function $\alpha_i(-)$ descends to an additive function  on $\Cohc(X)$ for all $d-c \le i \le d$.
\end{enumerate}
Note that $\dim(E'),\dim(E'')\le \dim(E)$ for every short exact sequence $0 \longrightarrow E' \longrightarrow E \longrightarrow E''\longrightarrow 0$ in $\Cohc(X)$ with $E\neq 0$. Let us denote by $\torssh(E)$ the biggest subsheaf $E'$ of the sheaf $E$ with $\dim(E')<\dim(E)$. A sheaf $E$ with $\torssh(E)=0$ is called pure. In particular, $E/\torssh(E)$ is a pure sheaf. If $U\subset X$ is an open subset with $\dim(X\setminus U)<\dim(E)$ then $E$ is pure if and only $E|_U$ is pure.\\
Let us consider a sequence $E=E_0 \sur E_1 \sur E_2 \sur \ldots$ of quotients in $\Cohc(X)$. By induction on $\dim(E)$ we show that the sequence is stationary. The assertion is trivial for $\dim(E)<d-c$. Let us consider the following commutative diagram in $\Cohc(X)$ with exact rows and columns
\begin{equation} \label{noether} \xymatrix @R=1cm @C=1cm { & \ker(f_j) \ar@{^{(}->}[d]  &  & \ker(g_j) \ar@{^{(}->}[d] & \\
 0 \ar[r] & \torssh(E_j) \ar[r] \ar[d]^{f_j} & E_j \ar[d] \ar[r] & E_j/\torssh(E_j) \ar[d]^{g_j} \ar[r] & 0 \\
0 \ar[r] & \torssh(E_{j+1}) \ar@{->>}[d] \ar[r] & E_{j+1} \ar[d] \ar[r] & E_{j+1}/\torssh(E_{j+1}) \ar@{->>}[d] \ar[r] & 0 \\
& \coker(f_j)  & 0  & \coker(g_j)  &  } 
\end{equation}
By the snake lemma, $\coker(g_j)=0$ for all $j\in \mathbb{N}$. Without loss of generality we can assume $\dim(E)=\dim(E_j)=\dim(E_j/\torssh(E_j))=\mathrel{\mathop{:}}e\ge d-c$ for all $j\in \mathbb{N}$ and we conclude 
\[ \alpha_e(E_j/\torssh(E_j))=\alpha_e(E_{j+1}/\torssh(E_{j+1})) + \alpha_e(\ker(g_j))\]
with all values of $\alpha_e$ in $\mathbb{N}$. Thus, the sequence $(\alpha_e(E_j/\torssh(E_j)))_{j\in \mathbb{N}}$ is descending and $\alpha_e(\ker(g_j))=0$ follows for all $j\gg0$. Hence $\dim(\ker(g_j))<\dim(E)=\dim(E_j/\torssh(E_j))$ for $j\gg0$. If $\ker(g_j)\neq 0$ in $\Cohc(X)$, there is an open subset $U_j\subset X$ with $\codim(X\setminus U_j)>c$ such that $\ker(g_j)|_{U_j}$ is a subsheaf of $E_j/\torssh(E_j)$ (cf.\ Corollary \ref{Ext}) of smaller dimension for $j\gg0$ which is a contradiction to the purity of $E_j/\torssh(E_j)$. Hence, $\coker(f_j)=\ker(g_j)=0$ for $j\gg 0$. By  applying  the induction hypothesis to the sequence $\torssh(E_n) \sur \torssh(E_{n+1}) \sur \ldots$ with $n\gg 0$ and the five lemma to the diagram (\ref{noether}), we conclude the assertion. 
\end{proof}
\textbf{Remark.} As in the case $\Coh(X)$ one can show that $\Cohc(X)$ is not artinian for $c>0$. The exceptional case $c=0$ will be studied in the next subsection. \\

Finally, we come to the main theorem of this section about quotient categories.
\begin{theorem}
The homological dimension of $\Cohc(X)$ is at most $c$. 
\end{theorem}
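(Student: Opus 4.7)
The approach is to construct, for every coherent sheaf $G$ on $X$, an injective resolution of $P(G)$ in $\QCohc(X)$ of length at most $c$. Combined with the identification $\Ext^i_{(c)}(F,G) \cong \Ext^i_{\QCohc(X)}(P(F),P(G))$ from the preceding lemma (together with Proposition \ref{equivcat} to match $\Ext$-groups of $\Cohc(X)$ with those computed in $\QCohc(X)$), this immediately yields $\Ext^i_{\Cohc(X)}(F,G)=0$ for all coherent $F,G$ and all $i>c$, which is exactly the claim.

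First I would take a minimal injective resolution $0 \to G \to I^0 \to I^1 \to \cdots$ of $G$ in $\QCoh(X)$. By the classical Matlis structure theorem, globalised to the noetherian scheme $X$, each $I^j$ splits as
\[ I^j = \bigoplus_{x\in X} J_x^{\,\mu^j(x,G)}, \qquad \mu^j(x,G) = \dim_{k(x)} \Ext^j_{\ho_{X,x}}(k(x), G_x), \]
where $J_x$ denotes the quasi-coherent injective hull of the residue field $k(x)$, supported set-theoretically on $\overline{\{x\}}$, and $\mu^j(x,G)$ is the $j$-th Bass number of $G$ at $x$.

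The decisive input is smoothness. Write $c(x)$ for the codimension of $\overline{\{x\}}$. Since $X$ is smooth, $\ho_{X,x}$ is regular local of Krull dimension $c(x)$, hence has global homological dimension exactly $c(x)$. Consequently $\Ext^j_{\ho_{X,x}}(k(x),-)=0$ for $j>c(x)$, so $\mu^j(x,G)=0$ whenever $j>c(x)$. On the other hand $P(J_x)=0$ in $\QCohc(X)$ as soon as $c(x)>c$, because $J_x$ is supported in codimension $c(x)$. A summand $J_x^{\mu^j(x,G)}$ can therefore contribute to $P(I^j)$ only when simultaneously $c(x)\le c$ and $j\le c(x)$, forcing $j\le c$. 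Hence $P(I^j)=0$ for every $j>c$. Since $P$ is exact and (by the argument at the end of the preceding lemma's proof) sends injective $\ho_X$-modules to injective objects of $\QCohc(X)$, the truncated complex
\[ 0 \to P(G) \to P(I^0) \to P(I^1) \to \cdots \to P(I^c) \to 0 \]
is an injective resolution of $P(G)$ in $\QCohc(X)$ of length at most $c$, so $\Ext^i_{\QCohc(X)}(-,P(G))=0$ for $i>c$.

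\textbf{Main obstacle.} The only ingredient beyond what has already been verified in the excerpt is the Matlis/Bass-number description of minimal injective resolutions and its globalisation to coherent sheaves on the noetherian scheme $X$; once that is invoked, the argument reduces to a matching-of-degrees bookkeeping between the support codimension $c(x)$ and the homological degree $j$. Exactness of $P$, preservation of injectivity by $P$, and the support computation for $J_x$ have all been dealt with in the preceding lemma, so no further technical work is required.
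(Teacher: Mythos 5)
Your overall strategy --- resolving $G$ by a minimal injective resolution, invoking the Matlis--Gabriel decomposition $I^j=\bigoplus_x J_x^{\mu^j(x,G)}$ and the vanishing of the Bass numbers $\mu^j(x,G)$ for $j>\codim\overline{\{x\}}$ over the regular local rings $\ho_{X,x}$ --- is genuinely different from the paper's proof, which instead passes through the local-to-global $\Ext$ spectral sequence, a geometric lemma (covering a suitable open set by the complements of $\kappa+1$ ample divisors to kill $\Ho^p$ for $p>\kappa$), and a direct limit of spectral sequences. Your route is in principle cleaner and more local-algebraic, but it contains one step that is false as stated.

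The step that fails is the claim that $P(I^j)=0$ for $j>c$. The kernel of $P$ is $\QCh{c+1}(X)$, the category of $\ho_X$-modules whose support has \emph{closure} of codimension at least $c+1$, and this subcategory is not closed under infinite direct sums: the closure of an infinite union of subsets of codimension $>c$ can be all of $X$. Concretely, for $X$ a surface, $c=1$ and $G=\ho_X$, one has $\mu^2(x,\ho_X)=\dim_{k(x)}\Ext^2_{\ho_{X,x}}(k(x),\ho_{X,x})=1$ for every closed point $x$, so $I^2$ contains $\bigoplus_{x\ \mathrm{closed}}J_x$ as a direct summand; the support of this sheaf is the dense set of all closed points, hence $I^2\notin\QCh{2}(X)$ and $P(I^2)\neq 0$. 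Consequently the truncated complex $0\to P(G)\to P(I^0)\to\cdots\to P(I^c)\to 0$ is not a resolution. The argument can be repaired: what you actually need, and what is true, is $\HOM_{(c)}(F,I^j)=0$ for all \emph{coherent} $F$ and all $j>c$. Indeed, since $F$ is coherent on a noetherian scheme, $\Hom(F|_U,-)$ commutes with arbitrary direct sums, and direct sums commute with the filtered colimit over $U$, so $\HOM_{(c)}(F,I^j)=\bigoplus_x\HOM_{(c)}(F,J_x)^{\mu^j(x,G)}$; each summand indexed by an $x$ with $\codim\overline{\{x\}}>c$ vanishes because $J_x|_{X\setminus\overline{\{x\}}}=0$. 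Since $\EXT^i_{(c)}(F,G)$ is by definition the $i$-th cohomology of $\HOM_{(c)}(F,I^\bullet)$, this yields the required vanishing for $i>c$ without ever asserting $P(I^j)=0$. With that correction (and keeping the restriction to coherent $F$ explicit, since the statement concerns $\Cohc(X)$), your proof goes through.
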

\begin{proof}
We will use Corollary \ref{Ext} to show that $\Ext^i_{(c)}(F,G)=0$ for all $F,G\in \Cohc(X)$ and all $i>c$. In order to compute $\Ext^i(F|_U,G|_U)$ in the expression
\begin{equation} \label{Ext2} \Ext^i_{(c)}(F,G) \cong  \varinjlim_{\codim(X\setminus U)>c} \Ext^i(F|_U,G|_U),\end{equation}
we use the spectral sequence
\begin{equation} \label{specseq} \Ho^p(U,\altE xt^q(F|_U,G|_U)) \Longrightarrow \Ext^{p+q}(F|_U,G|_U).\end{equation}
The crucial argument is given by the following lemma.
\begin{lemma}
Let us fix $F,G\in \Coh(X)$, an open subset $U\subset X$ with $\codim(X\setminus U)>c$ and two numbers $p,q\in \mathbb{N}$ with $p+q>c$. Then, there is an open subset $V\subset U$ with $\codim(X\setminus V)>c$ such that $\Ho^p(V,\altE xt^q(F|_V,G|_V))=0$.   
\end{lemma}
\begin{proof}
If the stalk $\altE xt^q(F,G)_x=\Ext^q_{\ho_{X,x}}(F_x,G_x)$ is not zero for $x\in X$, we conclude $\dimh(F_x)\ge q$, where $\dimh(F_x)$ is the homological dimension of $F_x$, i.e.\ the minimal length of a projective resolution of $F_x$. By the Auslander--Buchsbaum formula $\dimh(F_x)+\depth(F_x)=\dim(\ho_{X,x})$, where $\depth(F_x)\ge 0$ denotes the depth of $F_x$, we conclude $\dim(\ho_{X,x})\ge q$. Hence, $\codim\supp(\altE xt^q(F,G))\ge q$. If $q>c$, we take $V=U\setminus \supp(\altE xt^q(F,G))$ and the assertion follows. In the remaining case $q\le c$ we use the following claim to prove the lemma.\\

\textbf{Claim.} \it Let $Y$ be a projective scheme, $E$ a coherent sheaf on $Y$ and let $W\subset Y$ be an open subset with $\codim(Y\setminus W)>\kappa$. There is an open subset $W'\subset W$ with $\codim(Y\setminus W')>\kappa$ such that $\Ho^p(W',E|_{W'})=0$ for all $p>\kappa$. \rm \\

Indeed, take $Y=\supp(\altE xt^q(F,G))$, $E=\altE xt^q(F,G)$, $W=Y\cap U$ and $\kappa=c-q$ and apply the claim. The open subset $V\subset U$ is given by $U\setminus (Y \setminus W')$. \\
To prove the claim, we choose $\kappa+1$ effective divisors $D_0,\ldots,D_\kappa$ in the linear system associated to some very ample divisor $H$ on $Y$ such that $Y\setminus U\subset D_0 \cap \ldots \cap D_\kappa$ and $\codim(D_0 \cap \ldots \cap D_\kappa)>\kappa$. We can use the affine cover $(Y\setminus D_i)_{i=0}^\kappa$ of $W'\mathrel{\mathop{:}}=\bigcup_{i=0}^\kappa Y\setminus D_i$ to compute the cohomology of $E|_{W'}$ (see \cite{Hartshorne}, III, Theorem 4.5). Thus, $\Ho^p(W',E|_{W'})=0$ for all $p>\kappa$. 
\end{proof}
If $V$ in the lemma is independent of $p$ and $q$, we obtain $\Ext^i(F|_V,G|_V)=0$ for all $i>c$ by the spectral sequence (\ref{specseq}). The equation (\ref{Ext2}) would prove the theorem. Unfortunately, $V$ depends at least on $q$ and we cannot replace $V$ by the intersection of the $V_q$'s associated to the values of $q$ since $\Ho^p(V,E|_V)=0$ is not valid after replacing $V$ by an open subset of $V$. For example $\Ho^1(\mathbb{A}_k^2\setminus\{0\},\ho_{\mathbb{A}_k^2\setminus\{0\}})\neq 0$ by the local cohomology sequence associated to $\{0\}\subseteq \mathbb{A}_k^2$. The correct interpretation of the lemma is the formula
\[ \varinjlim_{\codim(X\setminus U)>c} \Ho^p(U,\altE xt^q(F|_U,G|_U)) = 0 \qquad\mbox{ for all }p+q>c.\]
Using this and equation (\ref{Ext2}), we can prove the theorem by applying the following lemma which is proved in Appendix B. 
\end{proof}
\begin{lemma} \label{limitsequence}
For every $F,G\in \QCoh(X)$ the spectral sequences 
\[\Ho^p(U,\altE xt^q(F|_U,G|_U)) \Longrightarrow \Ext^{p+q}(F|_U,G|_U)\] 
form an inductive system. There is a limit spectral sequence converging against 
\[ E^i = \varinjlim_{\codim(X\setminus U)>c} \Ext^i(F|_U,G|_U)\]
with $E_2$-term 
\[ E_2^{p,q} = \varinjlim_{\codim(X\setminus U)>c} \Ho^p(U,\altE xt^q(F|_U,G|_U)). \]
\end{lemma}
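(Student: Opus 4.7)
My plan is to realize all the local-to-global Ext spectral sequences simultaneously from a single inductive system of double complexes, and then invoke exactness of filtered colimits of abelian groups to extract the limit spectral sequence.

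First, I fix an injective resolution $G \to I^\bullet$ in $\QCoh(X)$. Since every open immersion $j:U\hookrightarrow X$ is flat, restriction is exact and preserves injective $\ho_X$-modules (cf.\ \cite{Hartshorne}, III, Lemma 6.1), so $I^\bullet|_U$ is an injective resolution of $G|_U$ in $\QCoh(U)$ for every admissible open $U\subset X$, i.e.\ every open with $\codim(X\setminus U)>c$. I then choose a Cartan--Eilenberg resolution $\altH om(F,I^\bullet)\to J^{\bullet,\bullet}$ by injective $\ho_X$-modules. Applying $\Gamma(U,(-)|_U)$ yields the double complex
\[ C^{p,q}_U \mathrel{\mathop{:}}= \Gamma(U,J^{p,q}|_U), \]
one of whose two associated spectral sequences is precisely the standard local-to-global spectral sequence with $E_2^{p,q}(U)=\Ho^p(U,\altE xt^q(F|_U,G|_U))$ abutting to $\Ext^{p+q}(F|_U,G|_U)$.

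For any inclusion $V\subset U$ of admissible opens, restriction of sections induces a morphism of double complexes $C^{\bullet,\bullet}_U\to C^{\bullet,\bullet}_V$, and hence a morphism of spectral sequences on every page. Since any two admissible opens intersect in another admissible open, this gives an inductive system of spectral sequences over the directed set of admissible opens (ordered by reverse inclusion). Taking the filtered colimit of the double complexes produces a double complex
\[ C^{p,q} \mathrel{\mathop{:}}= \varinjlim_{\codim(X\setminus U)>c} C^{p,q}_U, \]
whose two associated spectral sequences I would identify with the sought limit spectral sequence.

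The heart of the argument, and the main technical obstacle, is to verify that formation of the $E_r$-page of a double complex spectral sequence commutes with filtered colimits. Since filtered colimits of abelian groups are exact, they preserve kernels, cokernels, and hence cohomology; an induction on $r$ then yields natural isomorphisms
\[ E_r^{p,q} \cong \varinjlim_{\codim(X\setminus U)>c} E_r^{p,q}(U) \]
for all $r\ge 1$, compatible with the differentials $d_r$. Specializing to $r=2$ gives the claimed description of the $E_2$-term, and Corollary \ref{Ext} together with exactness of filtered colimits identifies the cohomology of the total complex of $C^{\bullet,\bullet}$ with $E^{p+q}=\varinjlim_U\Ext^{p+q}(F|_U,G|_U)$. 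For the convergence, the filtrations on each $\Ext^{p+q}(F|_U,G|_U)$ coming from the total complex of $C^{\bullet,\bullet}_U$ are finite and exhaustive; passing to the filtered colimit preserves both properties and yields a bounded exhaustive filtration on $E^{p+q}$ whose associated graded is $\varinjlim_U E_\infty^{p,q}(U)\cong E_\infty^{p,q}$, establishing the stated convergence of the limit spectral sequence.
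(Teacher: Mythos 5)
Your proposal is correct and follows essentially the same route as the paper: a single injective resolution of $G$ and a Cartan--Eilenberg resolution over $X$, restricted to the admissible opens (using that restriction is exact and preserves injectives) and fed through $\Gamma(U,-|_U)$ to produce an inductive system of double complexes, after which exactness of filtered colimits of abelian groups identifies every page and the abutment of the limit spectral sequence. The only cosmetic difference is that the paper packages the colimit step as a standalone lemma on direct limits in the category of spectral sequences (under a uniform bound on the degeneration page), whereas you take the colimit at the level of double complexes and argue page by page; both hinge on the same exactness of filtered colimits.
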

\addtocounter{prop}{1}
\setcounter{cor4}{\value{prop}}
\textbf{Remark \arabic{section}.\arabic{prop}.} It is not difficult to show that the homological dimension of $\Cohc(X)$ is exactly $c$. For $c=d$ we just mention $\Cohc(X)=\Coh(X)$. For $c<d$ take a smooth subvariety $Y\subset X$ of codimension $c$. Using a Koszul resolution of $\ho_Y$, which exists at least locally, one shows that $\altE xt^c(\ho_Y,\ho_Y)$ is a line bundle $L$ on $Y$. Now, we define $U\subset X$ to be $(X\setminus Y) \cup (X\setminus D)$, where $D$ is some effective very ample divisor on $X$ intersecting $Y$ transversely. Since $U\cap Y= Y\setminus D$ is affine, we have got infinitely many sections of $L$ on $U$, i.e. $\dim\Ho^0(U,\altE xt^c(\ho_Y,\ho_Y)|_U)=\infty$. These sections do not vanish if we restrict them to open subsets $V$ of $U$ with $\codim(X\setminus V)>c$ since $\codim_Y(Y\setminus(V\cap Y))>0$. Thus, 
\[  E_2^{0,c} = \varinjlim_{\codim(X\setminus U)>c} \Ho^0(U,\altE xt^c(\ho_Y|_U,\ho_Y|_U))\]
is an infinite-dimensional vector space which survives in the limit spectral sequence, and $\dim\Ext^c_{(c)}(\ho_Y,\ho_Y)=\infty$ follows.

\subsection{The cases $c=0$ and $c=1$}

In the last part of Section \arabic{section} we consider the cases $c=0$ and $c=1$ more carefully. The case $c=0$ is well known, but we will include it for the sake of completeness. The results for $c=1$ are important for the classification of stability conditions and autoequivalences on $\Der(\Coh_{(1)}(X))$ in the next two sections. As before, $X$ is an irreducible smooth projective variety over an algebraically closed field $k$.

\begin{prop} \label{case0}
The category $\Coh_{(0)}(X)$ is equivalent to the abelian category of finite-dimensional $K(X)$-vector spaces, where $K(X)$ is the function field of $X$. In particular, $\Ka(\Coh_{(0)})\stackrel{\rk}{=}\mathbb{Z}$. 
\end{prop}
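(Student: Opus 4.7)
The plan is to exhibit an explicit equivalence via the generic stalk functor. Let $\eta \in X$ denote the generic point; since $X$ is irreducible, $\ho_{X,\eta} = K(X)$. Define
\[ \Phi : \Coh(X) \longrightarrow \Vect_{K(X)}^{fd}, \qquad \Phi(E) = E_\eta = E \otimes_{\ho_X} K(X). \]
Each $E_\eta$ is finite dimensional because $E$ is coherent, and $\Phi$ is exact (it is a localization). A coherent sheaf $E$ lies in $\Ch{1}(X)$ precisely when $\eta \notin \supp(E)$, i.e.\ iff $E_\eta = 0$. Hence $\Phi$ kills the Serre subcategory $\Ch{1}(X)$, and by the universal property of Proposition \ref{abelianquotient} it descends to an exact $k$-linear functor
\[ \bar\Phi : \Coh_{(0)}(X) \longrightarrow \Vect_{K(X)}^{fd}. \]

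For fully faithfulness, I would apply Corollary \ref{Ext} with $c=0$: the open subsets $U \subset X$ with $\codim(X\setminus U) > 0$ are exactly the nonempty open subsets, and the directed system of restrictions realizes the generic stalk. Concretely,
\[ \Hom_{(0)}(F,G) \cong \varinjlim_{U\neq\emptyset} \Hom_{\ho_U}(F|_U,G|_U) \cong \Hom_{K(X)}(F_\eta,G_\eta), \]
where the last isomorphism uses that $F$ is coherent (a $K(X)$-linear map between $F_\eta$ and $G_\eta$ spreads out to a morphism on some nonempty open set, and two such morphisms agreeing at $\eta$ agree on some smaller open). This identifies $\bar\Phi$ on Hom-sets with the canonical map, proving fully faithfulness.

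For essential surjectivity, any finite-dimensional $K(X)$-vector space is isomorphic to $K(X)^n$ for some $n \ge 0$, and $K(X)^n = \Phi(\ho_X^{\oplus n})$. Thus $\bar\Phi$ is an equivalence of abelian categories. Since the category of finite-dimensional $K(X)$-vector spaces has $\Ka$-group $\mathbb{Z}$ with the dimension over $K(X)$ as the isomorphism (and this dimension equals the generic rank $\rk(E)$ of a sheaf $E$), the second assertion $\Ka(\Coh_{(0)}(X)) \stackrel{\rk}{=} \mathbb{Z}$ follows immediately.

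The only delicate point is the identification of the direct limit of $\Hom$-spaces with $\Hom_{K(X)}(F_\eta, G_\eta)$; this is where coherence of $F$ and $G$ is used, via the standard fact that any $\ho_{X,\eta}$-linear map between finitely generated stalks extends to an $\ho_U$-linear map on a sufficiently small open neighborhood of $\eta$. Everything else is formal from Corollary \ref{Ext} and the universal property of the Serre quotient.
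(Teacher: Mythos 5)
Your proof is correct, but it runs in the opposite direction from the paper's. The paper first computes $\Hom_{(0)}(\ho_X,\ho_X)\cong K(X)$ via Corollary \ref{Ext} and builds an additive exact functor $\Vect^{f.d.}_{K(X)}\to \Coh_{(0)}(X)$ sending $K(X)$ to $\ho_X$; full faithfulness is then automatic from additivity and that single endomorphism computation, and the real work is essential surjectivity, proved by Serre's theorem: a twist $\ho_X(-mH)^{\oplus\rk(E)}$ embeds in $E$ with torsion cokernel, and $\ho_X(-mH)\cong\ho_X$ in the quotient via the sequence $0\to\ho_X(-mH)\to\ho_X\to\ho_{mH}\to 0$. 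You instead use the generic-stalk functor $E\mapsto E_\eta$, for which essential surjectivity is trivial ($K(X)^n=\Phi(\ho_X^{\oplus n})$) and the work shifts to full faithfulness, namely the identification $\varinjlim_{U\neq\emptyset}\Hom(F|_U,G|_U)\cong\Hom_{K(X)}(F_\eta,G_\eta)$, which is the standard spreading-out statement for finitely presented sheaves (equivalently, that $\altE xt^0$-sheaves of coherent sheaves commute with stalks). Both arguments are sound; yours buys independence from projectivity (no ample divisor is needed, so it works verbatim for irreducible quasi-projective or even noetherian integral schemes) and avoids choosing $H$ and $m$, while the paper's keeps the general Hom-limit machinery to the single case $F=G=\ho_X$ at the cost of invoking Serre's theorem. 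The reduction of the $\Ka$-group statement to $\dim_{K(X)}=\rk$ is the same in both.
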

\begin{proof} Using Corollary \ref{Ext} we get
 \[ \Hom_{(0)}(\ho_X,\ho_X)\quad=\varinjlim_{\emptyset \neq U\subset X \,\mbox{\scriptsize open}} \Hom(\ho_U,\ho_U) \quad= \quad K(X). \]
Thus, there is an additive fully faithfull functor $\Phi:\Vect^{f.d.}_{K(X)} \longrightarrow \Coh_{(0)}(X)$ from the category of finite-dimensional $K(X)$-vector spaces to $\Coh_{(0)}(X)$ mapping $K(X)$ to $\ho_X$. As every short exact sequence in $\Vect^{f.d.}_{K(X)}$ splits, $\Phi$ is an exact functor. It remains to show that every coherent sheaf $E$ is isomorphic to $\ho_X^{\oplus \rk(E)}$ in $\Coh_{(0)}$. By Serre's theorem there is a short exact sequence
\[ 0\longrightarrow \ho_X(-mH)^{\oplus\rk(E)} \longrightarrow E \longrightarrow T \longrightarrow 0\]
in $\Coh(X)$ for some effective ample divisor $H$ on $X$, some $m\in \mathbb{N}$ and  some torsion sheaf $T$. Hence, $E\cong \ho_X(-mH)^{\oplus\rk(E)}$ in $\Coh_{(0)}$. Using the exact sequence
\[ 0\longrightarrow \ho_X(-mH) \longrightarrow \ho_X \longrightarrow \ho_{mH}\longrightarrow 0\]
we obtain $\ho_X(-mH)\cong \ho_X$ in $\Coh_{(0)}(X)$ and the assertion follows.
\end{proof}
\begin{cor} \label{birational}
Two irreducible smooth projective varieties are birational if and only if their quotient categories $\DER^b_{(0)}(X)$ and $\DER^b_{(0)}(Y)$ are equivalent.
\end{cor}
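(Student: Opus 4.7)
The plan is to use Proposition \ref{case0} to reduce the problem to a statement about function fields, and then to extract the function field $K(X)$ intrinsically from $\DER^b_{(0)}(X)$ as an endomorphism $k$-algebra. By that proposition, there is an exact $k$-linear equivalence $\Coh_{(0)}(X)\cong \Vect^{f.d.}_{K(X)}$, and since the target is a semisimple abelian category of Krull dimension zero, every object of $\DER^b_{(0)}(X)\cong \Der(\Vect^{f.d.}_{K(X)})$ decomposes uniquely up to isomorphism as a finite direct sum $\bigoplus_n V_n[n]$ of shifts of finite-dimensional $K(X)$-vector spaces, with
\[ \Hom(V[i],W[j]) \;\cong\; \begin{cases} \Hom_{K(X)}(V,W) & \text{if } i=j,\\ 0 & \text{if } i\neq j. \end{cases}\]
In particular, the indecomposable objects of $\DER^b_{(0)}(X)$ are precisely the shifts $K(X)[n]=\ho_X[n]$, each with endomorphism $k$-algebra canonically isomorphic to $K(X)$.

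For the forward implication, if $X$ and $Y$ are birational, then $K(X)\cong K(Y)$ as $k$-algebras, and this isomorphism induces an equivalence $\Vect^{f.d.}_{K(X)}\cong \Vect^{f.d.}_{K(Y)}$ of abelian categories. Passing to bounded derived categories and invoking Proposition \ref{case0} on both sides yields the desired exact $k$-linear equivalence $\DER^b_{(0)}(X)\cong \DER^b_{(0)}(Y)$.

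For the reverse implication, let $\Psi:\DER^b_{(0)}(X)\longrightarrow \DER^b_{(0)}(Y)$ be an exact $k$-linear equivalence. Since $\Psi$ is an equivalence of $k$-linear categories, it preserves indecomposable objects and induces $k$-algebra isomorphisms on endomorphism rings. Applying this to $\ho_X$ and using the intrinsic description above, $\Psi(\ho_X)\cong \ho_Y[n]$ for some $n\in\mathbb{Z}$, so that
\[ K(X) \;\cong\; \End_{\DER^b_{(0)}(X)}(\ho_X) \;\cong\; \End_{\DER^b_{(0)}(Y)}(\Psi(\ho_X)) \;\cong\; K(Y) \]
as $k$-algebras. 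By the classical fact that two smooth projective varieties over $k$ are birational if and only if their function fields are isomorphic as $k$-algebras, $X$ and $Y$ are birational.

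The only point requiring care is that the identification of the endomorphism rings is as $k$-algebras rather than as abstract rings; this is immediate from the hypothesis that $\Psi$ is $k$-linear. No serious obstacle arises, and the argument is essentially a translation of Proposition \ref{case0} into the language of derived categories.
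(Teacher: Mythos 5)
Your proposal is correct and follows essentially the same route as the paper: both directions rest on Proposition \ref{case0}, the decomposition of every object of $\DER^b_{(0)}(X)$ into shifted copies of the indecomposable object $\ho_X$, and the recovery of the function field $K(X)$ as the endomorphism $k$-algebra $\Hom_{(0)}(\ho_X,\ho_X)$. The paper phrases the forward direction as functoriality of $\DER^b_{(0)}(-)$ on $\Var_{(0)}$ and additionally records that any equivalence decomposes uniquely as $[n]\circ\psi^\ast$, but the substance of the argument is identical to yours.
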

\begin{proof}
Note that $\DER^b_{(0)}(-)$ is a covariant functor on the category $\Var_{(0)}$ of rational maps (see Subsection \arabic{section}.1). On the other hand, every equivalence $\Psi:\DER^b_{(0)}(X) \longrightarrow \DER^b_{(0)}(Y)$ has (up to isomorphism) a unique decomposition $[n]\circ \psi^\ast$, where $\psi:Y\dashrightarrow X$ is birational map. Indeed, every object in $\DER^b_{(0)}(Y)$ is the direct sum of the indecomposable objects $\ho_Y[n]$. Thus, the image of the indecomposable object $\ho_X$ is (isomorphic to) $\ho_Y[n]$ for some integer $n\in \mathbb{Z}$. It follows, that $[-n]\circ \Psi$ is determined by the isomorphism $K(X)\cong \Hom_{(0)}(\ho_X,\ho_X) \longrightarrow \Hom_{(0)}(\ho_Y,\ho_Y)\cong K(Y)$ given by a birational map $\psi:Y \dashrightarrow X$.
\end{proof}
We will generalize this result to the case $c=1$ and discuss the cases $c>1$ in Section 5. In the remaining part of this subsection we prove some results necessary for later sections. Let us start with the computation of the $\Ka$-group.\\
To compute the $\Ka$-group of $\Coh_{(1)}(X)$ one can use the remarks at the beginning of Subsection \arabic{section}.2. The result is well know (cf.\ \cite{Manin1},\cite{Murre}), but we will include the proof for completeness.
\begin{prop}   The $\Ka$-group of the abelian category $\Coh_{(1)}$ is 
 \[ \Ka(\Coh_{(1)}(X))\cong \mathbb{Z} \oplus\Pic(X),\]
and the isomorphism is given by the additive function $\rk\oplus \det$.
\end{prop}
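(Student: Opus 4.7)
The plan is to combine the Quillen-type localization sequence for $\Ka$-groups under the Serre quotient $\Coh(X)/\Ch{2}(X)\cong \Coh_{(1)}(X)$ with an inductive analysis of locally free sheaves, following the outline already given at the start of Subsection~3.2. The opening remarks of that subsection yield
\[
\Ka(\Coh_{(1)}(X))\;\cong\;\Ka(X)/F^2\Ka(X),
\]
where $F^2\Ka(X)$ is the image of $\Ka(\Ch{2}(X))$ in $\Ka(X)$. Since $X$ is smooth and projective, every coherent sheaf admits a finite locally free resolution, so the rank and determinant extend to additive maps $\rk:\Ka(X)\to \mathbb{Z}$ and $\det:\Ka(X)\to \Pic(X)$.

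First I would verify that $\rk\oplus\det$ vanishes on $F^2\Ka(X)$. Rank vanishes trivially on any sheaf with codimension-$\ge 2$ support. For the determinant, if $E\in \Ch{2}(X)$ with locally free resolution $E_\bullet \to E$, then restricting to $U=X\setminus\supp(E)$ gives an exact complex of vector bundles, so $\det(E)|_U\cong \ho_U$; since $X$ is smooth and $\codim(X\setminus U)\ge 2$, the restriction $\Pic(X)\to \Pic(U)$ is injective, hence $\det(E)\cong \ho_X$. Thus $\rk\oplus\det$ descends to a well-defined map $\Ka(\Coh_{(1)}(X))\to \mathbb{Z}\oplus \Pic(X)$, and this map is surjective since $(n,L)$ is the image of $[L]+(n-1)[\ho_X]$.

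Injectivity reduces, via locally free resolutions, to the claim that for every locally free sheaf $E$ of rank $r\ge 1$,
\[
[E]\;\equiv\;(r-1)[\ho_X]+[\det E]\pmod{F^2\Ka(X)}.
\]
I would argue by induction on $r$; the case $r=1$ is trivial. For $r\ge 2$, choose a very ample $L$ so that $E\otimes L$ is globally generated; a sufficiently general section vanishes in expected codimension $r\ge 2$, giving a short exact sequence
\[
0\to \ho_X\to E\otimes L\to F\to 0
\]
with $F$ torsion-free of rank $r-1$ and locally free on a codimension-$\ge 2$ complement. The inductive hypothesis, applied to a locally free sheaf that agrees with $F$ outside a codimension-$\ge 2$ subset (and therefore has the same class in $\Ka(\Coh_{(1)}(X))$ and the same determinant as $F$), gives $[F]\equiv (r-2)[\ho_X]+[\det F]$. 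Since $\det F\cong \det(E)\otimes L^{\otimes r}$, summing yields $[E\otimes L]\equiv (r-1)[\ho_X]+[\det(E)\otimes L^{\otimes r}]$. Using the ring structure $[E\otimes L]=[E]\cdot[L]$ in $\Ka(X)$ together with the multiplicativity of the topological filtration, one sees that tensoring by $L$ acts on $\Ka(X)/F^2\Ka(X)$ compatibly with the $\Pic(X)$-action on the determinant summand; untwisting then yields the desired identity for $[E]$.

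The main obstacle is the bookkeeping in the inductive step: one must handle the torsion-free quotient $F$ correctly (using that it becomes locally free after removing a codimension-$\ge 2$ subset, and that this modification is invisible in $\Ka(\Coh_{(1)}(X))$), and one must then disentangle the effect of the line bundle twist. Both ingredients rely on the purity of $\Pic$ along codimension-$\ge 2$ loci on smooth varieties and on the compatibility of the topological filtration with products in $\Ka(X)$.
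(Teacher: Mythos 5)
Your route is genuinely different from the paper's: you compute $\Ka(X)/F^2\Ka(X)$ directly by induction on the rank of locally free sheaves, whereas the paper constructs an explicit splitting $\Psi:\Pic(X)\to\Ka(\Coh_{(1)}(X))$, $L\mapsto\cl L-\cl\ho_X$, out of the Weil divisor group, checks that $\det$ is a left inverse, and uses Serre's theorem to identify $\im\Psi$ with $\ker(\rk)$ by reducing every torsion sheaf to a sum of classes $\cl\ho_{D_i}$. However, your inductive step contains a step that fails as written. You apply the inductive hypothesis to ``a locally free sheaf that agrees with $F$ outside a codimension-$\ge 2$ subset''. Such a sheaf need not exist. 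The cokernel $F$ of a regular section $\ho_X\to E\otimes L$ vanishing in codimension $r\ge 3$ is reflexive but not locally free along the zero locus; since a locally free sheaf is in particular normal, any locally free $E'$ with $E'|_V\cong F|_V$ for $\codim(X\setminus V)\ge 2$ satisfies $E'\cong j_{V\ast}(F|_V)\cong F\dd\cong F$, which is not locally free --- so no such $E'$ exists once $r\ge 3$ and $\dim X\ge 3$ (on surfaces you are saved because every reflexive sheaf is locally free). The repair is to strengthen the inductive claim to all torsion-free sheaves: for torsion-free $F$ of rank $s$ one shows $\cl F\equiv(s-1)\cl\ho_X+\cl\det F\pmod{F^2\Ka(X)}$, with base case $F\hookrightarrow F\dd=\det F$ and cokernel supported in codimension $\ge 2$, and with the additional check in the inductive step that a general section of the twist has torsion-free cokernel away from a codimension-$\ge 2$ set (the non-locally-free locus of $F$ itself also has codimension $\ge 2$, so it does no harm).

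A second point you leave implicit is the ``untwisting''. What you actually need is $\big(\cl E-r\cl\ho_X\big)\cdot\big(\cl L-\cl\ho_X\big)\in F^2\Ka(X)$, i.e.\ $F^1\cdot F^1\subseteq F^2$; the multiplicativity of the topological filtration is a theorem of SGA6 and is not formal. In this special case it can be checked by hand by writing $\cl E-r\cl\ho_X$ as a difference of classes of torsion sheaves (Serre's theorem, as in the paper) and choosing the divisor in $|L|$ to meet their supports properly, but some such argument must be supplied. The paper's proof sidesteps both issues entirely --- it never tensors, never invokes the ring structure of $\Ka(X)$, and reduces everything to line bundles and structure sheaves of divisors in one application of Serre's theorem --- at the modest price of having to verify by hand that $D\mapsto\cl\ho_X(D)-\cl\ho_X$ kills principal divisors, which your approach gets for free from the additivity of $\det$.
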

\setcounter{claim1}{\value{prop}}
\begin{proof}
If we associate to a Weil divisor $D=\sum_{i=1}^p n_iD_i$ with irreducible components $D_i$ the class $\sum_{i=1}^p n_i \cl\ho_{D_i}$ in the $\Ka$-group $\Ka(\Coh_{(1)}(X))$, we obtain a group homomorphism $\tilde{\Psi}$ from the Weil group $\Weil(X)$ of $X$ into this $\Ka$-group. The short exact sequence 
\[ 0\longrightarrow \ho_X \longrightarrow \ho_X(D) \longrightarrow \ho_X(D)|_D\cong \ho_D \longrightarrow 0 \]
in $\Coh_{(1)}(X)$ shows $\tilde{\Psi}(D) = \cl \ho_X(D)- \cl\ho_X$ for every effective Weil divisor $D$. If $f=s/t$ is a rational function given by a quotient of two nonzero sections $s,t\in \Ho^0(X,\ho_X(D))$, we get $\tilde{\Psi}(\divisor(f))=\tilde{\Psi}(\divisor(s))-\tilde{\Psi}(\divisor(t))=0$. Thus, we obtain a group homomorphism $\Psi:\Pic(X) \longrightarrow \Ka(\Coh_{(1)}(X))$ mapping a line bundle $L$ to $\cl L-\cl\ho_X$. The morphism $\Psi$ maps $\Pic(X)$ onto a direct summand of $\Ka(\Coh_{(1)}(X))$ because $\det:\Ka(\Coh_{(1)}(X))\longrightarrow \Pic(X)$ is a left inverse of $\Psi$. The image of $\Psi$ is contained in the kernel of the rank homomorphism $\rk:\Ka(\Coh_{(1)}(X))\longrightarrow \mathbb{Z}$. \\
Due to Serre's theorem, every coherent sheaf $G$ on $X$ fits into a short exact sequence
\begin{equation} 0 \longrightarrow \ho_X(mH)^{\oplus \rk(G)} \longrightarrow G\longrightarrow T\longrightarrow 0 ,
\end{equation}
where $H$ is some fixed ample divisor, $m$ some sufficiently small integer and $T$ is a torsion sheaf. If we regard $T$ as an object in $\Coh_{(1)}(X)$, we can assume that $T$ is a successive extension of torsionfree sheaves $T_i$ on irreducible divisors $D_i$. Repeating the argument with the short exact sequence (\arabic{equation}) with $H|_{D_i}$, we see that $T_i$ is a direct sum of line bundles $\ho_{D_i}(m_iH|_{D_i})$ in $\Coh_{(1)}(X)$. The latter are isomorphic to $\ho_{D_i}$ in $\Coh_{(1)}(X)$ and we see that $\cl T$ is a sum of classes  $\cl\ho_{D_i}$ in $\Ka(\Coh_{(1)}(X))$, i.e.\  contained in the image of $\Psi$. If some object $\cl E - \cl F$ in $\Ka(\Coh_{(1)}(X))$ has rank zero, we get
\[ \cl E -\cl F = \big(\cl \ho_X(mH)^{\oplus r} + \cl T_E\big) - \big(\cl \ho_X(mH)^{\oplus r} + \cl T_F\big) = \cl T_E -\cl T_F \in \im\Psi. \]
Thus, the following short sequence is exact and splits
\[ 0 \longrightarrow \Pic(X) \xrightarrow{\;\Psi\;} \Ka(\Coh_{(1)}(X)) \xrightarrow{\;\rk\;} \mathbb{Z} \longrightarrow 0. \]  
\end{proof}
Note that the category $\Coh_{(1)}(X)$ contains the Serre subcategory $\Coh_{(1)}^1(X)$ of torsion sheaves modulo torsion sheaves supported in codimension greater than one. This category is of finite length and the simple objects are (the structure sheaves of) irreducible divisors on $X$. The quotient category is $\Coh_{(0)}$. The associated short exact sequence of $\Ka$-groups is 
\[ \Weil(X) \longrightarrow \Pic(X)\oplus \mathbb{Z} \longrightarrow \mathbb{Z} \longrightarrow 0,\]
where the map on the left hand side is the usual map associating to each Weil divisor its line bundle. As $K(X)\cong \Hom_{(0)}(\ho_X,\ho_X)$ and $k\cong \Hom_{(1)}(\ho_X,\ho_X)$ (cf.\ Proposition \ref{HomExt} (4)), we can extend this sequence to the following exact sequence
\[ 0\longrightarrow \Aut_{(1)}(\ho_X) \longrightarrow \Aut_{(0)}(\ho_X) \longrightarrow \Weil(X) \longrightarrow \Pic(X)\oplus \mathbb{Z} \longrightarrow \mathbb{Z} \longrightarrow 0.\]
The two automorphism groups can be regarded as the first higher $\Ka$-groups of the categories $\Coh_{(1)}(X)$ resp.\ $\Coh_{(0)}(X)$.\\

The next result is a very important tool for the classifications in Section 4 and 5.

\begin{prop} \label{HomExt}
Let $E,E'$ and $T, T'$ be two torsionfree respectively torsion sheaves on $X$ such that $\supp(T)\cap\supp(T')$ contains no divisor. In the case $\dim(X)\ge 2$ we get
\begin{enumerate}
\item $\Hom_{(1)}(T,T')=0$,
\item $\Hom_{(1)}(\ho_D,\ho_D)=K(D)$, where $D\subset X$ is an irreducible effective divisor on $X$ with function field $K(D)$,
\item $\Hom_{(1)}(T,E)=0$, but $\dim\Hom_{(1)}(E,T)=\infty$,
\item $\Hom_{(1)}(E,E')=\Hom(E\dd,E'^{\vee\vee})$, where $E\dd$ denotes the reflexive hull of $E$ and analogue for $E'$, in particular, $\dim\Hom_{(1)}(E,E')<\infty$,
\item $\Ext^1_{(1)}(T,T')=0$, but $\dim\Ext^1_{(1)}(T,T)=\infty$,
\item $\Ext^1_{(1)}(E,T)=0$, but $\dim\Ext^1_{(1)}(T,E)=\infty$,
\item $\dim \Ext^1_{(1)}(E,E')=\infty$.
\end{enumerate}
For $\dim(X)=1$ the results are the same if we replace $\infty$ by some natural number. 
\end{prop}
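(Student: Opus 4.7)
The plan is to reduce every assertion to a direct-limit calculation via Corollary \ref{Ext} and, for the $\Ext^1$ statements, to combine it with the local-to-global spectral sequence $\Ho^p(U,\altE xt^q(F|_U,G|_U))\Rightarrow\Ext^{p+q}(F|_U,G|_U)$, then pass to the limit over opens $U\subset X$ with $\codim(X\setminus U)>1$. Two standing facts will be used throughout: (a) any torsionfree sheaf $F$ becomes isomorphic to its reflexive hull $F\dd$ in $\Cohc(X)$ since $\coker(F\to F\dd)$ has support of codimension $\ge 2$; and (b) a reflexive sheaf on a smooth variety is locally free outside a closed set of codimension $\ge 3$.

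All vanishing assertions---(1) and the first halves of (3), (5), (6)---follow by choosing $U$ carefully. For (1) and the first half of (5), pick $U$ whose complement absorbs $\supp(T)\cap\supp(T')$, which is permissible since this intersection has codimension $\ge 2$; then $T|_U$ and $T'|_U$ have disjoint supports, so every $\altE xt^q(T|_U,T'|_U)$ vanishes and both $\Hom$ and $\Ext^1$ are zero over $U$. The first half of (3) is immediate: morphisms from a torsion sheaf to a torsionfree sheaf vanish on every open. For the first half of (6), use (a) and (b) to restrict to the locally free locus of $E\dd$; there $\altE xt^{q\ge 1}(E|_U,T|_U)=0$, so the spectral sequence degenerates to $\Ext^1(E|_U,T|_U)\cong\Ho^1(U,\altH om(E,T)|_U)$. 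The divisorial part of $\supp(T)$ is a projective subscheme $Y\subset X$, and restricting the Čech/very-ample-divisors vanishing claim inside the proof of the homological-dimension theorem to $Y$ with $\kappa=0$ shows $\varinjlim\Ho^1(Y\cap U,\altH om(E,T)|_{Y\cap U})=0$.

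Claim (2) is a direct calculation: $\Hom(\ho_D|_U,\ho_D|_U)=\Gamma(D\cap U,\ho_{D\cap U})$, and as $U$ ranges over opens of $X$ with $\codim(X\setminus U)>1$, $D\cap U$ ranges exactly over opens of $D$ with codimension-$\ge 1$ complement in $D$, so the limit is $K(D)$. For (4), apply (a) to replace $E,E'$ by $E\dd,E'\dd$, then invoke the $S_2$/Hartogs extension: for reflexive $F,G$ on $X$ and $U$ with $\codim(X\setminus U)\ge 2$, the restriction $\Hom(F,G)\to\Hom(F|_U,G|_U)$ is bijective. The limit stabilizes at $\Hom(E\dd,E'\dd)$, which is finite-dimensional by projectivity of $X$.

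The four infinite-dimensionality statements all exploit that $K(D)$ has infinite $k$-dimension whenever $\dim X\ge 2$. For the second part of (3), a divisor $D\subset\supp(T)$ with $T\twoheadrightarrow\ho_D$ in $\Cohc(X)$ reduces matters to $\Hom_{(1)}(E,\ho_D)$, which by the calculation of (2) contains the $K(D)$-module of rational sections of $E^\vee|_D$ of rank $\rk E$. For the second half of (5), the isomorphism $\altE xt^1(\ho_D,\ho_D)\cong N_{D/X}$ feeds $K(D)$ into the $E_2^{0,1}$-term of the limit spectral sequence. For the second half of (6), after restricting to the locally free locus of $E\dd$ one has $\altE xt^1(\ho_D,E)\cong E|_D\otimes N_{D/X}$, and its sections in the limit contain $K(D)^{\rk E}$. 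Finally, for (7), replace $E,E'$ by their reflexive hulls and use the local-cohomology exact sequence
\[\Ho^1(X,\altH om(E,E'))\to\Ho^1(U,\altH om(E,E')|_U)\to\Ho^2_{X\setminus U}(X,\altH om(E,E'))\to\Ho^2(X,\altH om(E,E'));\]
choosing $X\setminus U$ to contain a smooth codimension-$2$ subvariety $W$ produces infinite-dimensional contributions from $\Ho^2_W$, and one verifies that enough of these classes persist under further shrinking of $U$. The main technical obstacle is exactly this last bookkeeping in (7) and, to a lesser extent, reducing the general $T$ in (6) cleanly to the model case $T=\ho_D$; the case $\dim(X)=1$ requires only replacing the infinite-dimensional $K(D)$ by the finite number $[K(D):k]$ at each step.
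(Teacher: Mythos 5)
Your proposal is correct and, for parts (1)--(6), follows essentially the same route as the paper: Corollary \ref{Ext} plus the limit local-to-global spectral sequence, reduction of torsionfree sheaves to their reflexive hulls, reduction of torsion sheaves to $\ho_D$, and the observation that sections of $N_{D/X}$-type sheaves over affine opens $U\cap D$ survive in the limit. (In fact your treatment of the first half of (6) is slightly more complete than the paper's: the paper only notes $\altE xt^{q\ge 1}(E|_U,T|_U)=0$, whereas you also kill the $E_2^{1,0}$-term $\varinjlim\Ho^1(U,\altH om(E,T)|_U)$ by applying the \v{C}ech vanishing claim to the divisorial support with $\kappa=0$, which is genuinely needed.) The one place you diverge is (7): the paper reduces it to (6) by a short exact sequence with torsionfree sub and torsion quotient (in effect $0\to E(-D)\to E\to E\otimes\ho_D\to 0$, so that $\Hom_{(1)}(\mbox{torsionfree},E')$ is finite by (4) while $\Ext^1_{(1)}(\mbox{torsion},E')$ is infinite by (6), forcing $\Ext^1_{(1)}(E,E')$ to be infinite), which is shorter and avoids local cohomology entirely. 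Your local-cohomology route does work, and the ``bookkeeping'' you flag as the main obstacle is in fact easy to close: since $H=\altH om(E\dd,E'^{\vee\vee})$ is reflexive, hence satisfies $S_2$, one has $\Ho^0_Z(U,H)=\Ho^1_Z(U,H)=0$ for every closed $Z$ of codimension $\ge 2$, so all restriction maps $\Ho^1(U,H)\to\Ho^1(V,H)$ in the direct system are injective and every class produced by $\Ho^2_W(X,H)$ persists in the limit. You should state this injectivity explicitly; as written, that step is the only genuine gap, and the paper's reduction to (6) is the more economical alternative.
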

\setcounter{claim4}{\value{prop}}
\begin{proof} The proposition is well known for $\dim(X)=1$ as $\Coh_{(1)}(X)=\Coh(X)$ in this case. 
To prove it for $\dim(X)\ge 2$, we use Lemma \ref{limitsequence}, in particular
\[ E_2^{p,q} = \varinjlim_{\codim(X\setminus U)>c} \Ho^p(U,\altE xt^q(F|_U,G|_U)), \]
together with Corollary \ref{Ext}. The vanishing of $\Hom_{(1)}(T,E)$ is obvious. If we choose $U$ such that $U\cap\supp(T)\cap\supp(T')=\emptyset$, the equations $\Hom_{(1)}(T,T')=\Ext^1_{(1)}(T,T')=0$  follow immediately. \\
For $\Hom_{(1)}(\ho_D,\ho_D)=K(D)$ we just mention that every nonempty open subset of $D$ is an intersection of $D$ with an open subset $U\subset X$ of $\codim(X\setminus U)>1$. Thus, every rational function on $D$ is contained in $\Hom(\ho_{D\cap U},\ho_{D \cap U})=\Ho^0(U,\altH om(\ho_D|_U,\ho_D|_U))$ for a suitable $U$. \\
For every torsionfree sheaf $E$ there is a short exact sequence in $\Coh(X)$
\[ 0 \longrightarrow E \longrightarrow E\dd \longrightarrow T'' \longrightarrow 0\]
with $\codim \supp(T'')>1$, hence, $E\cong E\dd$ in $\Coh_{(1)}(X)$ which proves the first part of (4). For reflexive sheaves $F,G$ the restriction map $\Hom(F,G)\longrightarrow \Hom(F|_U,G|_U)$ is an isomorphism for all open subsets $U$ with $\codim(X\setminus U)>1$ and the second part of (4) follows. \\
To prove $\dim\Hom_{(1)}(E,T)=\infty$ we can assume $T=\ho_D$ since every torsion sheaf has a filtration with quotients of this kind, as shown in the proof of Proposition \arabic{section}.\arabic{claim1}. As every torsionfree sheaf is locally free outside a closed subset of codimension two, we can further assume that $E$ is locally free on $U$ and, moreover, that $U\cap D$ is affine. Then, $\dim\Ho^0(U,\altH om(E|_U,T|_U))=\dim \Ho^0(U\cap D, E^\vee)=\infty$ since $\dim (U\cap D)\ge 1$. Using $\codim_D(U\cap D)\ge 1$, these sections of $E^\vee|_{D\cap U}$ cannot vanish by restricting them to smaller open subsets $V\subset U$ and the assertion follows.\\
Using the first part of (5), we can restrict the proof of $\dim\Ext^1_{(1)}(T,T)=\infty$ to the case $T=\ho_D$ for some effective divisor $D$ on $X$. We choose $U\subseteq X$ such that $D$ is non-singular on $D\cap U$. After replacing $X$ by $U$, Remark \arabic{section}.\arabic{cor4} will prove the assertion. The same arguments apply to the case $\dim\Ext^1_{(1)}(T,E)=\infty$ if we further assume that $E$ is locally free on $U$. \\
To show the first part of (6) we can assume that $E|_U$ is locally free and, thus, $\altE xt^q(E|_U,T|_U)=0$ for all $q>0$ follows.
For the proof of the last equation (7) we consider the following part of a long exact sequence
\[ \underbrace{\Hom_{(1)}(E/\torssh(E),E')}_{\dim(\ldots)<\infty\mbox{ \scriptsize by }(4)} \longrightarrow \underbrace{\Ext^1_{(1)}(\torssh(E),E')}_{\dim(\ldots)=\infty\mbox{ \scriptsize by }(6)} \longrightarrow \Ext^1_{(1)}(E,E')\]
and the assertion follows immediately.
\end{proof}
\textbf{Remark.} Note that the first part of (6) implies that in $\Coh_{(1)}(X)$ every coherent sheaf $E$ is a direct sum $\torssh(E)\oplus (E/\torssh(E))$ of its torsion subsheaf and the torsionfree quotient. Thus, the computation of $\Hom_{(1)}$-groups and $\Ext^1_{(1)}$-groups can be reduced to the cases discussed in Proposition \ref{HomExt}.\\

\setcounter{claim}{\value{section}}

%\newpage

\section{The space of stability conditions on $\rm D^b_{(1)}(X)$}

In this section we compute the space $\stab(\DER^b_{(1)}(X))$ of locally-finite numerical stability conditions on $\DER^b_{(1)}(X)$ for an irreducible smooth projective variety $X$ of dimension $\dim(X)>1$. The reader who is only interested in the classification of autoequivalences should read the proofs of Lemma 4.1 and Corollary 4.2 and may skip the rest of this section.\\
The case $\dim(X)=1$ was already studied by  E.\ Macr\`{i} \cite{Macri} and S.\ Okada \cite{Okada}. We will see that our result is a natural generalization of the case of curves of genus $g\ge 1$ (see equation (\ref{curves})). Nevertheless, the proof is quite different since we cannot use Serre duality. \\
First of all we need to specify the notion `numerical'. For this we have to choose a free abelian quotient $\Ka(\DER^b_{(1)}(X))\sur \altN(\DER^b_{(1)}(X))$ of finite rank. By Proposition  \arabic{claim}.\arabic{claim1}  we have $\Ka(\DER^b_{(1)}(X))\cong \mathbb{Z} \oplus \Pic(X)$ and it is very natural to take  
\[ \xymatrix  @R=1cm @C=2cm {\rk\oplus \ce_1:\Ka(\DER^b_{(1)}(X)) \ar@{->>}[r] & \mathbb{Z}\oplus \Num^1(X), }\]
where $\Num^1(X)$ is the Picard-group of $X$ modulo numerical equivalence. To motivate this choice one could remark that the central charge of a numerical stability condition should be constant for each member of a flat family of sheaves. As for every numerical trivial line bundle there is some power which has a deformation to the trivial line bundle $\ho_X$ (see Kleiman's expos\'{e} [\cite{SGA6}, XIII, Theorem 4.6] in SGA6), any central charge which is constant under deformations of sheaves is numerical in our sense. Moreover, there is a short exact sequence
\[ 0 \longrightarrow \altN(\Dr{c+1}(X)) \xrightarrow{\qquad} \altN(\Der(X))=\altN(X) \xrightarrow{\rk\oplus \ce_1} \mathbb{Z}\oplus \Num^1(X)\longrightarrow 0,\]
where $\altN(\ldots)=\Ka(\ldots)/\Ka(\ldots)^\perp$, and the orthogonal complement is taken with respect to the Euler pairing $\chi(E,F)=\sum_i (-1)^i \dim \Ext^i(E,F)$. We will start with the classification of all locally-finite slicings. Note that $\Der(\Coh_{(1)}(X))=\DER^b_{(1)}(X)$ by Proposition \ref{equivcat}.

\subsection{Classification of locally-finite slicings}
The  idea of the classification of slicings is to relate stable objects to indecomposable objects using the following simple observation.
\begin{lemma} \label{indecom}
Let $\altP$ be a slicing on a triangulated category $\altD$ and $E,E'$ two objects in $\altD$. Then, $E\oplus E'\in \altP(\phi)$ for some $\phi\in\mathbb{R}$ if and only if $E\in \altP(\phi)$ and $E'\in \altP(\phi)$.
\end{lemma}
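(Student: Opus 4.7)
\textbf{The plan is to} prove the nontrivial implication by exploiting the Harder--Narasimhan filtrations of $E$ and $E'$. The forward direction is immediate: since $\altP(\phi)$ is by definition a full additive subcategory of $\altD$, it is closed under direct sums, so $E,E' \in \altP(\phi)$ gives $E\oplus E' \in \altP(\phi)$.

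For the converse, assume $E\oplus E' \in \altP(\phi)$. Let $0 = E_0 \subset \ldots \subset E_n = E$ be the HN filtration of $E$ with factors $A_i \in \altP(\phi_i)$, $\phi_1 > \ldots > \phi_n$, and pick an analogous filtration of $E'$ with factors $B_j \in \altP(\psi_j)$, $\psi_1 > \ldots > \psi_m$. I want to show $n = m = 1$ and $\phi_1 = \psi_1 = \phi$. The inclusion $A_1 = E_1 \hookrightarrow E$ is nonzero, and composing with the canonical inclusion $E \hookrightarrow E\oplus E'$ produces a nonzero morphism $A_1 \to E\oplus E'$; by axiom (2) of a slicing this forces $\phi_1 \leq \phi$. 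Dually, I would argue that the quotient morphism $g\colon E = E_n \to A_n$ is nonzero: were $g=0$, then applying $\Hom(-,A_n)$ to the triangle $E_{n-1}\to E \to A_n \to E_{n-1}[1]$ would show that $\Id_{A_n}$ lifts along the connecting map, forcing $\Hom(E_{n-1}[1], A_n) \neq 0$; but $E_{n-1}[1]$ is an iterated extension of objects in $\altP(\phi_i+1)$ for $i<n$, whose phases all strictly exceed $\phi_n$, so induction on the length of the filtration via the long exact $\Hom$-sequence yields $\Hom(E_{n-1}[1],A_n) = 0$, a contradiction. Post-composing $g$ with the projection $E\oplus E' \to E$ then gives a nonzero morphism $E\oplus E' \to A_n$, and axiom (2) forces $\phi_n \geq \phi$.

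Combining the two inequalities gives $\phi = \phi_1 = \phi_n$, hence $n=1$ and $E \cong A_1 \in \altP(\phi)$; the symmetric argument applied to $E'$ yields $E' \in \altP(\phi)$. The one real subtlety is the nonvanishing of the quotient map $g$, which rests on an inductive application of the slicing axiom to iterated extensions via the long exact $\Hom$-sequence; everything else is a formal manipulation of the defining axioms of a slicing.
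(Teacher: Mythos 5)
Your proof is correct, but it takes a genuinely different route from the paper's. The paper merges the two Harder--Narasimhan filtrations of $E$ and $E'$ into an explicit filtration of $E\oplus E'$ whose factors $B_l$ lie in $\altP(\psi_l)$ with $\psi_1>\ldots>\psi_p$, and then invokes the \emph{uniqueness} of Harder--Narasimhan filtrations to force $p=1$ and $B_1=E\oplus E'$; your argument never touches uniqueness. Instead you use only the Hom-vanishing axiom (2): the extremal factors of the filtration of $E$ give nonzero maps $A_1\to E\oplus E'$ and $E\oplus E'\to A_n$, whence $\phi_1\le\phi\le\phi_n$, which squeezes $n=1$. This is more self-contained, since uniqueness of HN filtrations is a consequence of the axioms that the paper uses without proof, whereas your long-exact-sequence computations derive everything directly from axiom (2); on the other hand, the paper's construction has the side benefit of exhibiting the HN filtration of an arbitrary direct sum, not just of a semistable one. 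One small point of symmetry you should not gloss over: you carefully prove that the projection $g\colon E\to A_n$ is nonzero, but you assert without proof that the composite $A_1=E_1\to E_2\to\ldots\to E_n=E$ is nonzero. This needs the exactly dual argument: applying $\Hom(A_1,-)$ to the triangle $E_{i-1}\to E_i\to A_i$ and using $\Hom(A_1,A_i[-1])=\Hom(A_1[1],A_i)=0$ (valid since $\phi_1+1>\phi_i$) shows each map $\Hom(A_1,E_{i-1})\to\Hom(A_1,E_i)$ is injective, so $\Id_{A_1}$ survives to a nonzero morphism $A_1\to E$. With that sentence added, the proof is complete.
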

\begin{proof}
Since $\altP(\phi)$ is additive per definition, it remains to show that $E\oplus E'\in \altP(\phi)$ implies $E, E'\in \altP(\phi)$. For this let us consider the Harder--Narasimhan filtrations 
\[ \xymatrix @C=0.3cm @R=0.7cm { 0=E_0 \ar[rr] & & E_1 \ar[rr] \ar[dl] & & E_2 \ar[rr] \ar[dl] & & \ldots \ar[rr] & & E_{m-1} \ar[rr] & & E_m=E \ar[dl] \\
 & A_1 \ar@{-->}[ul] & & A_2 \ar@{-->}[ul] & & & & & & A_m \ar@{-->}[ul] } \] 
with $A_i\in \altP(\phi_i)$ and 
\[ \xymatrix @C=0.3cm @R=0.7cm { 0=E'_0 \ar[rr] & & E'_1 \ar[rr] \ar[dl] & & E'_2 \ar[rr] \ar[dl] & & \ldots \ar[rr] & & E'_{n-1} \ar[rr] & & E'_n=E' \ar[dl] \\
 & A'_1 \ar@{-->}[ul] & & A'_2 \ar@{-->}[ul] & & & & & & A'_n \ar@{-->}[ul] } \]
with $A'_j\in \altP(\phi'_j)$ for $E$ respectively $E'$. We sort the set $\{\phi_1,\ldots,\phi_m,\phi'_1,\ldots, \phi'_n\}$ to obtain a  strictly descending sequence $\psi_1 > \ldots >\psi_p$ and we define for $1\le l\le p$ 
\[ F_l\mathrel{\mathop{:}}=E_{\max\{i\mid \phi_i\ge \psi_l\}}\oplus E'_{\max\{j\mid \phi'_j\ge \psi_l\}}.\]
Here we use the convention $E_{\max\emptyset}=E'_{\max\emptyset}=0$. There are natural distinguished triangles
\[ F_{l-1} \longrightarrow F_l \longrightarrow B_l \longrightarrow F_{l-1}[1] \]
with
\[ B_l=\begin{cases}
        A_i\oplus A'_j & \mbox{ if } \psi_l=\phi_i=\phi'_j \mbox{ for suitable } 1\le i\le m, 1\le j\le n, \\
        A_i & \mbox{ if } \psi_l=\phi_i \mbox{ for some } 1\le i\le m \mbox{ but } \psi_l\neq\phi'_j \mbox{ for all }1\le j\le n,\\
        A'_j & \mbox{ if } \psi_l=\phi'_j \mbox{ for some } 1\le j\le n \mbox{ but } \psi_l\neq\phi_i \mbox{ for all }1\le i\le m.\\
       \end{cases} \]
Thus, we obtain a Harder--Narasimhan filtration 
\[ \xymatrix @C=0.3cm @R=0.7cm { 0=F_0 \ar[rr] & & F_1 \ar[rr] \ar[dl] & & F_2 \ar[rr] \ar[dl] & & \ldots \ar[rr] & & F_{p-1} \ar[rr] & & F_p=E\oplus E' \ar[dl] \\
 & B_1 \ar@{-->}[ul] & & B_2 \ar@{-->}[ul] & & & & & & B_p \ar@{-->}[ul] } \] 
for $E\oplus E'$. Since such a filtration is unique, the assertion follows.
\end{proof}
Using our knowledge about $\DER^b_{(1)}(X)$, we obtain the following corollary.
\begin{cor} \label{coro1}
Let $\altP$ be a slicing on $\DER^b_{(1)}(X)$. Every indecomposable semistable object with respect to $\altP$ is up to isomorphism either a shifted indecomposable torsion sheaf or a shifted indecomposable torsionfree sheaf.
\end{cor}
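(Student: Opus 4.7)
The plan is to reduce the classification of indecomposable semistable objects to that of indecomposable sheaves in $\Coh_{(1)}(X)$, exploiting the fact that the heart has homological dimension one. The first step is to observe that since $\DER^b_{(1)}(X) \cong \DER^b(\Coh_{(1)}(X))$ by Proposition \ref{equivcat} and $\Coh_{(1)}(X)$ has homological dimension one, every bounded complex $E \in \DER^b_{(1)}(X)$ is isomorphic to the direct sum $\bigoplus_{i \in \mathbb{Z}} H^i(E)[-i]$ of its shifted cohomology sheaves. This is the standard fact that in the bounded derived category of an abelian category of homological dimension at most one, the cohomology filtration splits: in the truncation triangle
\[ \tau_{\le n-1}E \longrightarrow \tau_{\le n}E \longrightarrow H^n(E)[-n] \longrightarrow \tau_{\le n-1}E[1], \]
the gluing morphism lives in $\Hom(H^n(E)[-n], \tau_{\le n-1}E[1])$, which vanishes by a short induction using $\Ext^i_{(1)} = 0$ for $i \ge 2$, so the triangle splits and the claimed isomorphism follows by iteration on the length of $E$.

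With this decomposition available, I would take an arbitrary indecomposable object $E \in \altP(\phi)$ and write $E \cong \bigoplus_i H^i(E)[-i]$. Applying Lemma \ref{indecom} inductively, each nonzero summand $H^i(E)[-i]$ must itself lie in $\altP(\phi)$, and the indecomposability of $E$ forces all but one of the cohomology sheaves to vanish. Consequently $E \cong F[n]$ for some nonzero indecomposable $F \in \Coh_{(1)}(X)$ and some integer $n$.

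Finally, the Remark following Proposition \ref{HomExt} provides the decomposition $F \cong \torssh(F) \oplus (F/\torssh(F))$ in $\Coh_{(1)}(X)$. Since $F$ inherits indecomposability from $E = F[n]$, exactly one of these summands is nonzero, so $F$ is either an indecomposable torsion sheaf or an indecomposable torsionfree sheaf, completing the claim. The argument is essentially formal once the two key ingredients are in place, namely the homological dimension bound from Theorem 1.2 and the torsion/torsionfree splitting from the Remark after Proposition \ref{HomExt}; no real obstacle arises. The only mildly delicate point is the iterated application of Lemma \ref{indecom} to a multi-term direct sum, which is handled by a straightforward induction on the number of nonzero cohomology sheaves.
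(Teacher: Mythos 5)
Your proof is correct and follows essentially the same route as the paper: split $E$ into its shifted cohomology sheaves using the homological dimension one bound, apply Lemma \ref{indecom} to force all summands into $\altP(\phi)$ and then a single summand by indecomposability, and finally invoke the torsion/torsionfree splitting from the remark after Proposition \ref{HomExt}. The only cosmetic difference is that you spell out the splitting of the truncation triangles where the paper cites the textbook reference.
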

\begin{proof} Let $E$ be an indecomposable object. Since the homological dimension of $\Coh_{(1)}(X)$ is one, the complex $E$ is isomorphic to the direct sum of its shifted cohomology sheaves (see \cite{HuybFourMuk}, Corollary 3.14). Using the previous lemma, we conclude that each shifted cohomology sheaf is semistable of the same phase. Since $E$ is indecomposable, it coincides with one of its shifted cohomology sheaves up to isomorphism and we can assume $E\in \Coh_{(1)}(X)$. Using the remark following Proposition \ref{HomExt}, the previous lemma as well as the indecomposability of $E$, we get $E\cong \torssh(E)$ or $E\cong (E/\torssh(E))$. The indecomposability of $\torssh(E)$ resp.\ $E/\torssh(E)$ follows once again from the indecomposability of $E$. 
\end{proof}
Note that every stable object is indecomposable by Lemma \ref{indecom}. Hence, any stable object is a shifted stable sheaf which is either torsionfree or a torsion sheaf. \\
Let us assume that the slicing is locally-finite. Thus, every nonzero semistable object has a Jordan--H\"older filtration by stable objects of the same phase.  By  Corollary \ref{coro1} every stable object is (isomorphic to) a shifted stable sheaf. Using the Harder--Narasimhan filtration of a complex and the Jordan--H\"older filtrations of its semistable factors, we can produce a filtration whose quotients are shifted stable sheaves. The existence of complexes with nonzero rank shows that there is a stable torsionfree sheaf $F_0$. 
\begin{lemma} 
If $G_1[n_1]\in \altP(\phi_1)$ and $G_2[n_2]\in \altP(\phi_2)$ for stable sheaves $G_1$ and $G_2$ and $\phi_1\ge\phi_2$ then $n_1\ge n_2$. In particular, $G_1[n_1],G_2[n_2]\in \altP(\phi)$ implies $n_1=n_2$. 
\end{lemma}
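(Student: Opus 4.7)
The plan is to argue by contradiction. Suppose $n_1 < n_2$, so $n_2 - n_1 \ge 1$. Writing $\psi_i := \phi_i - n_i$, the axiom $\altP(\phi+1)=\altP(\phi)[1]$ places each sheaf $G_i$ itself in $\altP(\psi_i)$ as a stable (hence simple) object there, and the hypotheses give
\[
\psi_1 - \psi_2 \;=\; (\phi_1 - \phi_2) + (n_2 - n_1) \;\ge\; 1.
\]
I would then combine this phase gap with the orthogonality axiom $\Hom(A_1,A_2)=0$ for $A_i\in\altP(\phi_i)$ with $\phi_1>\phi_2$, together with the Hom/Ext non-vanishing results of Proposition \ref{HomExt}, splitting into four cases by the torsion/torsionfree type of $G_1$ and $G_2$.

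First, if $G_1$ is torsionfree and $G_2$ is torsion, Proposition \ref{HomExt}(3) provides $\Hom_{(1)}(G_1,G_2)\ne 0$, but orthogonality forces $\Hom_{\DER^b_{(1)}(X)}(G_1,G_2)=0$ since $\psi_1>\psi_2$, a contradiction. The two cases in which $G_2$ is torsionfree, i.e.\ both $G_i$ torsionfree, or $G_1$ torsion with $G_2$ torsionfree, I would treat uniformly: Proposition \ref{HomExt}(7) or the second half of \ref{HomExt}(6) yields $\Ext^1_{(1)}(G_1,G_2)=\Hom(G_1,G_2[1])\ne 0$. Since $G_2[1]\in\altP(\psi_2+1)$ and $\psi_1\ge\psi_2+1$, orthogonality pins $\psi_1=\psi_2+1$; now $G_1$ and $G_2[1]$ are both simple objects of the abelian category $\altP(\psi_1)$, yet cannot be isomorphic in $\DER^b_{(1)}(X)=\Der(\Coh_{(1)}(X))$ because they sit in different cohomological positions, so Schur's lemma forces $\Hom(G_1,G_2[1])=0$, contradicting the non-vanishing.

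For the remaining case where both $G_1$ and $G_2$ are torsion, no direct Ext between them need be nonzero, so I would introduce the stable torsionfree sheaf $F_0\in\altP(\psi_0)$ whose existence was noted just before the lemma. Proposition \ref{HomExt}(3) and (6) give $\Hom_{(1)}(F_0,G_i)\ne 0$ and $\Ext^1_{(1)}(G_i,F_0)\ne 0$ for $i=1,2$, and orthogonality translates these non-vanishings into the phase bounds $\psi_0\le\psi_i\le\psi_0+1$. Combined with $\psi_1-\psi_2\ge 1$ this pinches $\psi_1-\psi_2=1$ and moreover $\psi_2=\psi_0$; then $F_0$ and $G_2$ are non-isomorphic simple objects of $\altP(\psi_0)$ (one torsionfree, the other torsion), and Schur's lemma yields $\Hom(F_0,G_2)=0$, contradicting $\Hom_{(1)}(F_0,G_2)\ne 0$.

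The main obstacle is the borderline situation $\psi_1-\psi_2=1$, where the orthogonality axiom alone is insufficient because equal phases can support nonzero morphisms. The crucial move is to pass into the abelian slice $\altP(\psi)$ and exploit the fact that the two shifted stable sheaves in question are non-isomorphic simple objects of different cohomological degree there, so that Schur's lemma can kill the morphism that Proposition \ref{HomExt} produces.
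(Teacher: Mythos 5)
Your proof is correct and uses essentially the same ingredients as the paper's: the non-vanishing statements of Proposition \ref{HomExt}, the orthogonality axiom of a slicing, the auxiliary stable torsionfree sheaf $F_0$, and Schur's lemma applied to stable objects of equal phase to rule out the borderline case. The only difference is cosmetic --- you split into cases by the types of both $G_1$ and $G_2$ (handling the torsionfree-to-torsion case directly via $\Hom_{(1)}(G_1,G_2)\neq 0$ and reserving $F_0$ for the case where both are torsion), whereas the paper cases only on $G_2$ and routes both torsion subcases through $F_0$.
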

\begin{proof} Let us denote the phase of $F_0$ by $\phi_0$. By the last corollary, $G_i$ is either a stable torsion sheaf or a stable torsionfree sheaf of phase $\phi_i-n_i$. Assume the contrary $n_1<n_2$, hence 
 \[ \phi_1-n_1 \ge \phi_2-n_2+1.\]
Since $\Ext^1_{(1)}(G_1,F_0)\neq 0$ by Proposition \arabic{claim}.\arabic{claim4}, we get 
\[ \phi_0+1 \ge \phi_1-n_1.\]
If $G_2$ is a torsion sheaf, the nonvanishing of $\Hom_{(1)}(F_0,G_2)$ and $F_0\not\cong G_2$ imply
\[ \phi_2-n_2+1 > \phi_0+1\]
which is a contradiction to the previous two inequalities. If $G_2$ is torsionfree, we use $\Ext^1_{(1)}(G_1,G_2)\neq 0$ to conclude
\[ \phi_1-n_1\le \phi_2-n_2+1.\]
Combining this with the first inequality yields that $G_1$ and $G_2[1]$ are stable objects in $\altP(\phi_1-n_1)$ with a nontrivial morphism between them. Thus, $G_1\cong G_2[1]$ which is a contradiction.
\end{proof}
Since every semistable object has a filtration by stable objects of the same phase, the lemma shows that every semistable object is a shifted sheaf and the number of shifts does not decrease if we increase the phase. Thus, every Harder--Narasimhan filtration is a refinement of the usual cohomology filtration. But the latter is unique and we conclude that the Harder--Narasimhan filtration of a sheaf is in fact a filtration in $\Coh_{(1)}(X)$. In particular, the simple objects of $\Coh_{(1)}(X)$ are stable with respect to the slicing $\altP$. Thus, the structure sheaf $\ho_D$ of an irreducible effective divisor is stable of some phase $\phi_D$ and  for every torsionfree stable sheaf $F$ of phase $\phi$ we conclude 
\[ \phi_D-1 < \phi < \phi_D  \]
from $ \Hom_{(1)}(F,\ho_D)\neq 0$ and $ \Ext^1_{(1)}(\ho_D,F)\neq 0$. If we define $\psi$ by
\[ \psi+1=\sup\{ \phi_D \mid D\subset X \mbox{ an irreducible effective divisor } \},\]
we conclude $\phi\in [\psi,\psi+1)$ and $\phi_D\in(\psi,\psi+1]$ from the upper inequalities and the existence of a stable torsionfree sheaf, e.g.\ $F_0$. If $\psi$ is a phase of a torsionfree stable sheaf $F$, this sheaf has no nontrivial subsheaves, i.e.\ it is simple in $\Coh_{(1)}(X)$. As every torsionfree sheaf has subsheaves, we get $\phi \in (\psi,\psi+1]$ for the phase of every stable sheaf. Using the fact that any sheaf has a filtration by stable sheaves and $\Coh_{(1)}(X)$ as well as $\altP((\psi,\psi+1])$ are hearts of bounded t-structures, we obtain the following result.
\begin{prop} \label{slicings}
For every locally-finite slicing $\altP$ there is a unique $\psi\in\mathbb{R}$ such that $\altP((\psi,\psi+1])=\Coh_{(1)}(X)$.
\end{prop}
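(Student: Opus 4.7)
The plan is to take the real number $\psi$ defined just before the proposition by
\[\psi+1 = \sup\{\phi_D \mid D \subset X \text{ an irreducible effective divisor}\},\]
verify it satisfies $\altP((\psi,\psi+1]) = \Coh_{(1)}(X)$, and then deduce uniqueness by expressing $\psi+1$ as an intrinsic invariant of the pair (slicing, heart). First I would confirm $\psi$ is a well-defined real number: the supremum is finite by the nonvanishing of $\Ext^1_{(1)}(\ho_D, F_0)$ from Proposition \ref{HomExt} for any fixed torsion-free stable sheaf $F_0$ (yielding $\phi_D < \phi(F_0)+1$), and the set of $\phi_D$'s is nonempty since $X$ carries irreducible effective divisors and each $\ho_D$ is stable in $\altP$ by Corollary \ref{coro1}.

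Next I would establish the inclusion $\Coh_{(1)}(X) \subseteq \altP((\psi,\psi+1])$. The preceding analysis shows every stable sheaf has $\altP$-phase in $(\psi,\psi+1]$: torsion-stable $\ho_D$ by the definition of $\psi$, and torsion-free stable $F$ via the sandwich $\phi_D-1 < \phi(F) < \phi_D$ together with the simplicity obstruction ruling out $\phi(F)=\psi$. Local finiteness provides every object of $\Coh_{(1)}(X)$ with a finite filtration by stable subquotients (HN in $\altP$, then Jordan--H\"older inside each semistable piece), so each such object lies in the extension-closed subcategory generated by the $\altP(\phi)$ with $\phi \in (\psi,\psi+1]$, which is $\altP((\psi,\psi+1])$ by definition.

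To upgrade the inclusion to an equality I would invoke the principle that two hearts of bounded t-structures in the same triangulated category cannot strictly contain one another. Both sides are such hearts---$\altP((\psi,\psi+1])$ by \cite{Bridgeland02} and $\Coh_{(1)}(X)$ by Proposition \ref{equivcat}---so equality follows. This is the technical step I would verify most carefully: the standard argument decomposes any $E \in \altP((\psi,\psi+1])$ via its $\Coh_{(1)}(X)$-cohomology and observes that a nonzero cohomology object in a nonzero degree would lie simultaneously in $\altP((\psi+1,\infty))$ and in $\altP((-\infty,\psi+1])$, hence must vanish.

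For uniqueness, suppose $\altP((\psi',\psi'+1]) = \Coh_{(1)}(X)$ as well. I would consider the intrinsic invariant
\[\phi^{\ast} := \sup\{\phi \in \mathbb{R} \mid \altP(\phi) \cap \Coh_{(1)}(X) \neq 0\},\]
which depends only on $\altP$ and on the fixed heart. From the $\psi$-perspective, a semistable object of phase $\phi$ lies in the heart iff $\phi \in (\psi,\psi+1]$, so $\phi^{\ast} \leq \psi+1$; conversely, each $\ho_D$ is $\altP$-stable of phase $\phi_D$ and lies in the heart, so $\phi^{\ast} \geq \sup_D \phi_D = \psi+1$. Hence $\phi^{\ast}=\psi+1$, and the symmetric computation from the $\psi'$-side forces $\psi'=\psi$.
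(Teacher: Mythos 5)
Your existence argument is essentially the paper's own: the same $\psi$ with $\psi+1=\sup_D\phi_D$, the same use of the preceding lemmas to place all stable sheaves in $(\psi,\psi+1]$, the same passage from stable sheaves to all of $\Coh_{(1)}(X)$ via Harder--Narasimhan and Jordan--H\"older filtrations, and the same ``a heart contained in a heart is the whole heart'' conclusion. That part is fine.

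The uniqueness step, however, has a genuine gap. Your invariant $\phi^{\ast}=\sup\{\phi\mid\altP(\phi)\cap\Coh_{(1)}(X)\neq 0\}$ is pinned down from below only by the witnesses you actually have, namely the sheaves $\ho_D$, whose phases sup to $\psi+1$ \emph{by the definition of} $\psi$. So from the $\psi'$-side you obtain $\phi^{\ast}\le\psi'+1$ together with the same lower bound $\phi^{\ast}\ge\sup_D\phi_D=\psi+1$, which yields only $\psi\le\psi'$. The ``symmetric computation'' would require $\phi^{\ast}\ge\psi'+1$, i.e.\ semistable objects in the heart with phases accumulating at $\psi'+1$; asserting $\sup_D\phi_D=\psi'+1$ at this point is exactly the statement $\psi'=\psi$ you are trying to prove, so the argument is circular in that direction. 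What is really needed to exclude $\psi'>\psi$ is the absence of a ``gap'' just above $\psi$: one must show that $\altP(\phi)\neq 0$ for $\phi$ arbitrarily close to $\psi$ from above (equivalently, that the phases of $\altP$-stable torsionfree sheaves are not bounded away from $\psi$), since otherwise $\altP((\psi+\epsilon,\psi+1+\epsilon])$ would equal $\Coh_{(1)}(X)$ for all sufficiently small $\epsilon>0$. Your write-up does not address this, and it does not follow from the inequalities $\phi_D-1<\phi(F)<\phi_D$ alone, which are compatible with all torsionfree stable phases sitting in a short interval well above $\psi$. For slicings arising from the stability conditions classified later in the section this accumulation is forced by the central charge (e.g.\ the phases of $\ho_X(-mH)$ tend to $\psi$), but for a bare locally-finite slicing it is a separate claim that must be argued; this is precisely the point at which the paper itself is terse, and it is the one step of your proof that cannot be waved through.
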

\vspace{0.5cm}

\subsection{Classification of stability conditions}

Proposition \ref{slicings} allows the classification of all locally-finite numerical stability conditions on $\DER^b_{(1)}(X)$. Indeed, by applying some element of $\Glt$ to a locally-finite numerical stability condition $\sigma=(Z,\altP)$, we can assume $\altP((0,1])=\Coh_{(1)}(X)$. Using the intersection pairing $\Num^1(X)_\mathbb{R}\times \Num_1(X)_\mathbb{R} \longrightarrow  \mathbb{R}$, we  find two elements $\beta,\omega\in \Num_1(X)_\mathbb{R}$ and two numbers $a,b\in \mathbb{R}$ such that
\[ Z(E)=-\omega.\ce_1(E) +a\rk(E) + i(b\rk(E)+ \beta.\ce_1(E)) \qquad\mbox{ for all }E\in \DER^b_{(1)}(X). \]
If $\beta\neq 0$, there is a line bundle $L$ with $\beta.\ce_1(L)<-b$. In particular, $\Ima Z(L)<0$ which contradicts the axioms of a central charge. Thus, $\beta=0$ and $Z(T)\in \mathbb{R}_{<0}$ for every torsion sheaf $T$ follows. Hence, every torsion sheaf is semistable of the same phase $\phi=1$. Since $\Coh_{(1)}(X)$ is not of finite length, we conclude $b\neq 0$ and after applying a suitable element of $\Glt$ we can assume 
\[ Z(E)=-\omega.\ce_1(E) +i\rk(E) \qquad\mbox{ for all }E\in \DER^b_{(1)}(X). \]
Using $Z(\ho_D)<0$, we get $\omega.D>0$ for every effective divisor $D$ on $X$, where we used the shorthand $D=\ce_1(\ho_D)$. The condition of locally-finiteness forces us to improve the last inequality.
\begin{prop}
Using the previous notation we get 
\[ \inf\{ \omega.D\mid D\subset X \mbox{ an effective divisor on }X \}>0.\]
Conversely, every $\omega\in \Num_1(X)_\mathbb{R}$ with this property defines a locally-finite numerical stability condition on $\DER^b_{(1)}(X)$ with heart $\Coh_{(1)}(X)$ and central charge $Z(E)=-\omega.\ce_1(E) +i\rk(E)$. 
\end{prop}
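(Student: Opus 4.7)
The proposition has two converse implications. For the forward direction one must deduce $\inf\omega.D>0$ from local finiteness; for the converse one verifies the central charge axiom (i), the Harder--Narasimhan property (ii), and local finiteness directly. The forward direction is the more delicate part.

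\textbf{Forward direction.} Suppose for contradiction that $\inf\omega.D=0$, and let $\eta>0$ be the local-finiteness parameter of $\altP$. Pick effective divisors $D_n$ with $\omega.D_n<2^{-n-1}$, so that $\sum_{i\leq k}\omega.D_i<\tfrac{1}{2}$ for all $k$. Fix a very ample divisor $H$ on $X$ and an integer $m$ with $m\,\omega.H-\tfrac{1}{2}>\cot(\pi\eta)$. Consider the line bundles
\[ L_k := \ho_X\bigl(mH-D_1-\cdots-D_k\bigr),\qquad k\geq 0. \]
Since $Z(L_k)=\bigl(\sum_{i\leq k}\omega.D_i-m\,\omega.H\bigr)+i$ lies in the second quadrant with $|\Rea Z(L_k)|>\cot(\pi\eta)$, a direct computation gives $\phi(L_k)\in(1-\eta,1]$ for every $k$; each $L_k$ is stable (rank one torsionfree, every proper subobject having strictly smaller phase), and the quotient $L_k/L_{k+1}$ is isomorphic in $\Coh_{(1)}(X)$ to the simple torsion sheaf $\ho_{D_{k+1}}$, of phase $1$ and thus also in $\altP((1-\eta,1])$. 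Hence each inclusion $L_{k+1}\subset L_k$ is a strict monomorphism in the quasi-abelian category $\altP((1-\eta,1])$, producing an infinite strictly descending chain of strict subobjects and contradicting the finite-length requirement of local finiteness.

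\textbf{Converse direction.} Given $\delta:=\inf\omega.D>0$, splitting any nonzero $E\in\Coh_{(1)}(X)$ as $\torssh(E)\oplus(E/\torssh(E))$ (remark following Proposition \ref{HomExt}) gives axiom (i): a nonzero torsion summand contributes $-\omega.c_1<0$ to $Z$ (its $c_1$ being a nonzero effective class), while a nonzero torsionfree summand contributes $\Ima Z\geq 1$. The same splitting yields $|Z(E)|\geq\min(1,\delta)>0$ for every nonzero $E$. For the HN property, (ii.2) follows directly from the noetherianity of $\Coh_{(1)}(X)$ established in Subsection 3.2: any infinite strictly decreasing sequence of quotients produces a strictly ascending chain of kernels in $E^0$ which must stabilize. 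For (ii.1), given a hypothetical chain $\ldots\subset E_{j+1}\subset E_j\subset\ldots\subset E_0$ with strictly increasing phases, the ranks $\rk(E_j)=\Ima Z(E_j)$ stabilize (non-negative integers, non-increasing), so eventually the quotients $Q_j:=E_j/E_{j+1}$ are torsion; the strict phase increase with $\Ima Z$ constant then forces $\Rea Z(Q_j)=\Rea Z(E_j)-\Rea Z(E_{j+1})>0$, i.e.\ $\omega.c_1(Q_j)<0$, contradicting the fact that $Q_j$ has $c_1$ a nonzero effective class. Finally, local finiteness for any $\eta<\tfrac{1}{2}$ is a consequence of the $|Z|$-lower bound together with noetherianity: in $\altP((\phi-\eta,\phi+\eta))$ the $Z$-values of successive strict quotients lie in a common strict half-plane, preventing cancellation, so $|Z|\geq\min(1,\delta)$ forces any infinite strictly descending chain to produce a divergent telescoping sum.

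\textbf{Main obstacle.} The crux is the forward direction, where the challenge is arranging the entire infinite chain to lie inside a single quasi-abelian interval $\altP((1-\eta,1])$ for the fixed $\eta$ dictated by local finiteness, not merely having phases clustering near $1$. The rapid decay $\omega.D_n<2^{-n-1}$ provides this uniform control by keeping the cumulative sums $\sum\omega.D_i$ bounded by $\tfrac{1}{2}$, so a single choice of $m$ pins every $L_k$ inside the interval. Without this uniform control the chain would escape every small slicing interval and no contradiction with local finiteness could be derived.
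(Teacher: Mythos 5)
Your forward direction and your verification of the Harder--Narasimhan property coincide with the paper's argument: the same descending chain of line bundles $\ho_X(D-\sum_{j\le k}D_j)$ placed inside a single slice $\altP((1-\eta,1])$ by choosing the twisting divisor large against $\cot(\pi\eta)$, and the same rank-stabilization argument for (ii.1) combined with noetherianity of $\Coh_{(1)}(X)$ for (ii.2). Where you genuinely diverge is local finiteness, which the paper itself calls the hard part: it proves that $\altP((\phi-\eta,\phi+\eta))$ is artinian and noetherian by a long bookkeeping argument on the cohomology sheaves $H^{-1},H^0$ of the objects in a chain, introducing the sheaves $K^l_m, I^l_m, Q^l_m$ and controlling their ranks and charges one by one using $\varepsilon=\inf\omega.D$. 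You instead propose the standard ``mass'' argument: a uniform lower bound on the central charge plus the salient-cone condition on a slice of width $<1$ bounds the length of any strictly monotone chain of strict subobjects by a telescoping estimate. This is sound and considerably shorter, but as written it is only a sketch, and the one nontrivial step is elided: your bound $|Z(E)|\ge\min(1,\delta)$ is established for nonzero \emph{sheaves}, whereas the successive quotients in a chain in $\altP((\phi-\eta,\phi+\eta))$ are in general two-term complexes, for which $\rk$ and $\omega.\ce_1$ can partially cancel between $H^{-1}$ and $H^0$. The correct route is: every nonzero $Q$ in the slice has at least one Harder--Narasimhan factor, each factor is a shifted $Z$-semistable sheaf and hence has $|Z|\ge\min(1,\delta)$, and all factors have phase within $\eta<1/2$ of $\phi$, so projecting onto the direction $e^{i\pi\phi}$ gives $\Rea\bigl(e^{-i\pi\phi}Z(Q)\bigr)\ge\cos(\pi\eta)\min(1,\delta)>0$; additivity of $Z$ on strict short exact sequences (which are exactly the triangles with all vertices in the slice, by Bridgeland's Lemma 4.3) then makes the telescoping sum diverge. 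With that inserted your argument is complete --- and in fact the appeal to noetherianity in this step becomes unnecessary, since the same estimate kills infinite ascending chains as well.
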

\textbf{Remarks.} 
\begin{itemize}
\item Obviously, the set $C(X)$ of those $\omega\in \Num_1(X)_\mathbb{R}$ satisfying the inequality $\inf\{ \omega.D\mid D\subset X \mbox{ an effective divisor on }X \}>0$ is a convex cone and, therefore, connected. Furthermore, it is contained in the dual of the pseudoeffective cone $\overline{\Eff(X)}\subset \Num^1(X)_\mathbb{R}$ (cf. \cite{Lazarsfeld1}, Remark 2.2.28). In the case of surfaces the dual of $\overline{\Eff(X)}$ is the nef cone of the surface. On the other hand, by Kleiman's criterion the cone $C(X)$ contains the ample cone. Thus
\[ \Amp(X)= \Int C(X) \subset C(X) \subset \overline{C(X)}=\Nef(X). \]
Using ruled surfaces, one can find examples, where the inclusions are strict.
\item Every rational $\omega$, i.e.\ $\omega\in \Num_1(X)_\mathbb{Q}$, satisfying $\omega.D>0$ for any effective divisor $D$ on $X$ is contained in $C(X)$. Indeed, some positive multiple $n\omega$ of $\omega$ is integral and $\omega.D\ge 1/n$ follows. Note that these $\omega$ are dense in $C(X)$.
\item There is no numerical stability condition $\sigma=(Z,\altP)$ on $\Der(X)$ with heart $\Coh(X)$ for a surface $X$ with a curve $C$. Indeed, as the sheaves $k(x)$ are simple objects in $\Coh(X)$, there are stable and we can assume $k(x)\in \altP(1)$ after applying some element of $\Glt$ on $\sigma$. Hence, $\Ima Z$ cannot depend on $\ce_2$, and we can conclude $\Ima Z =b\cdot\rk $ by the same arguments as before. Thus $\ho_C\in \altP(1)$. Take $x\in C$ and consider the sequence $0\longrightarrow \ho_C(-mx) \longrightarrow \ho_C \longrightarrow \ho_{mx} \longrightarrow 0$, where we regard $mx$ as a divisor on $C$. Since $Z(\ho_{mx})=mZ(k(x))$ and $Z(k(x))<0$, we conclude $Z(\ho_C(-mx))\in \mathbb{R}_{>0}$ for $m\gg 0$ which is a contradiction. Thus, the situation on $\Der(X)$ differs completely from the one on $\DER^b_{(1)}(X)$.  
\end{itemize}
\begin{proof}
Assume the contrary and choose a sequence $(D_j)_{j\in \mathbb{N}}$ of effective divisors such that $\sum_{j\in \mathbb{N}} \omega.D_j$ converges. Since $\sigma$ is locally-finite, there is a real number $\eta>0$ such that $\altP((1-\eta,1+\eta))$ is of finite length. Pick a `sufficiently large' effective divisor $D$ such that $\omega.D-\sum_{j\in \mathbb{N}}\omega.D_j > \cot(\pi\eta)$. It is easy to see that the sheaves $\ho_X(D-\sum_{j=1}^n D_j)$ are stable of phase $\phi_n\in (1-\eta,1)$. Moreover, they form a strictly descending sequence of strict subobjects of $\ho_X(D)$ which contradicts the locally-finiteness of $\altP((1-\eta,1+\eta))$. \\
The  second part of the lemma is more involved. First of all we mention that the assumption on $\omega$ assures the existence of a numerical stability condition $\sigma$ with heart $\Coh_{(1)}(X)$ and central charge $Z(E)=-\omega.\ce_1(E) +i\rk(E)$. Indeed, to show the Harder--Narasimhan condition it is enough to check the properties ii.1 and ii.2 of Section \arabic{claim3}. Since $\Coh_{(1)}(X)$ is noetherian, ii.2 follows. Consider a strictly descending sequence $\ldots \subset E_{j+1}\subset E_j \subset \ldots \subset E_0$. For $j\gg 0$ we have $\rk(E_{j+1})= \rk(E_j)$ and, thus, $Z(E_j/E_{j+1})\in \mathbb{R}_{<0}$. Hence, $\phi_{j+1}<\phi_j$ and ii.1 follows. It remains to show that $\sigma$ is locally-finite which is the hard part of the proof. For this we choose some $\eta\in (0,1/2)$ and define $\varepsilon > 0$ to be
\[ \varepsilon \mathrel{\mathop{:}}= \inf\{ \omega.D\mid D\subset X \mbox{ an effective divisor on }X \}.\]
It suffices to show that $\altP((\phi-\eta,\phi+\eta))$ is locally-finite for all $\phi\in (\eta,\eta+1]$. Objects of $\altP((\phi-\eta,\phi+\eta))$ are complexes $E$ of length two with $H^0(E)\in \altP((\phi-\eta,1])$ and $H^{-1}(E)\in \altP((0,\phi+\eta-1))$, where we use the convention $\altP(\emptyset)=0$. In particular, $H^{-1}(E)$ is torsionfree. For every  short exact sequence $0\longrightarrow E \longrightarrow F \longrightarrow G\longrightarrow 0$ in $\altP((\phi-\eta,\phi+\eta))$ we obtain the long exact cohomology sequence
\[ 0 \longrightarrow H^{-1}(E) \longrightarrow H^{-1}(F) \longrightarrow H^{-1}(G)\longrightarrow H^0(E) \longrightarrow H^0(F) \longrightarrow H^0(G) \longrightarrow 0.\]
To prove that $\altP((\phi-\eta,\phi+\eta))$ is artinian, we choose a descending chain $\ldots \subset E_2\subset E_1 \subset E$ of strict subobjects. Looking at the long exact sequence, we get $\rk(H^{-1}(E_{n+1}))=\rk(H^{-1}(E_n))$ for $n\gg 0$ and since $H^{-1}(E_n/E_{n+1})$ is torsionfree, we obtain isomorphisms $H^{-1}(E_{n+1})\cong H^{-1}(E_n)$. We fix such an integer $n$ with $H^{-1}(E_l)\stackrel{\sim}{\longrightarrow} H^{-1}(E_m)$ for all $l\ge m\ge n$ and introduce the following shorthands for $l>m\ge n$
\begin{eqnarray*} &&L_m \mathrel{\mathop{:}}=H^0(E_m)\in \altP((\phi-\eta,1]), \quad I^l_m\mathrel{\mathop{:}}=\im (L_l \longrightarrow L_m) \in \altP((\phi-\eta,1]) ,\\ 
&&K^l_m\mathrel{\mathop{:}}=H^{-1}(E_m/E_l) \in \altP((0,\phi+\eta-1)),\;\; Q^l_m\mathrel{\mathop{:}}=H^0(E_m/E_l)\in \altP((\phi-\eta,1]). \end{eqnarray*}
The idea is to show $K^l_m=Q^l_m=0$, i.e. $E_l=E_m$ for $l>m\gg 0$. \\
Using the assumption on $n$ and the long exact sequence, we obtain short exact sequences
\[ 0 \longrightarrow K^l_m \longrightarrow L_l \longrightarrow I^l_m \longrightarrow 0\quad \mbox{and}\quad 0 \longrightarrow I^l_m \longrightarrow L_m \longrightarrow Q^l_m \longrightarrow 0.\]
Since $I_m^{l+1} \subset I_m^l$, we get $\rk(I^{l+1}_m) = \rk(I_m^l)$ for $l\gg m$. Thus, $Z(I_m^{l+1})-Z(I_m^{l})\ge \varepsilon$ if $I_m^{l+1}\neq I_m^l$ and $l\gg m$. If the sequence $\ldots \subset I_m^{l+1}\subset I_m^l \subset \ldots \subset I_m^{m+1}$ is not stationary, this contradicts $I_m^l\in \altP((\phi-\eta,1])$ for all $l>m$. Hence $I^l_m=I^{l+1}_m$ for all $l\gg m$ and we denote this subsheaf of $L_m$ by $I_m^\infty$ and the quotient  $L_m/I_m^\infty=\mathrel{\mathop{:}}Q^\infty_m$ coincides with $Q_m^l$ for $l\gg m$. \\
If the set $\{ \rk(K^l_m) \mid l>m\ge n\}$ is not bounded, we can find sequences $(l_p)_{p\in \mathbb{N}}$ and $(m_p)_{p\in \mathbb{N}}$ such that $l_p>m_p\ge n$ and $\lim_{p\rightarrow \infty} \rk(K^{l_p}_{m_p})=\infty$ as well as $\lim_{p\rightarrow \infty}  l_p=\infty$. The snake lemma applied to 
\[ \xymatrix @R=1cm @C=1cm {  0 \ar[r] & K_n^{l_p} \ar[r] \ar[d] & L_{l_p} \ar[d] \ar[r] & I_n^{l_p} \ar[r] \ar@{^{(}->}[d] & 0 \\ 0 \ar[r] & K_n^{m_p} \ar[r] & L_{m_p}  \ar[r] & I_n^{m_p}  \ar[r] & 0  } \]
shows that $K_{m_p}^{l_p}$ is a subsheaf of $K_n^{l_p}$ and $\lim_{p\rightarrow \infty} \rk(K^{l_p}_n)=\infty$ follows. For $p\gg 0$ we have the following short exact sequence
\[ 0 \longrightarrow K_n^{l_p} \longrightarrow L_{l_p} \longrightarrow I_n^\infty \longrightarrow 0 \]
which yields $Z(L_{l_p})=Z(I_n^\infty)+Z(K_n^{l_p})$. Since $K_n^{l_p}\in \altP((0,\phi+\eta-1))$ and $\lim_{p\rightarrow \infty} |Z(I_n^\infty)/Z(K_n^{l_p})|=0$, the phase of $Z(L_{l_p})$ is contained in $(0,\phi+\eta-1+\epsilon)$ for $p\gg0$ depending on $\epsilon>0$. If we choose $\epsilon>0$ such that $\phi+\eta-1+\epsilon<\phi-\eta$, we obtain a contradiction to $L_{l_p}\in \altP((\phi-\eta,1])$. Hence $\rk(K^l_m) \le C$ for all $l>m\ge n$, where $C\in \mathbb{N}$ is some constant. For $l>m\ge n$ we consider the following diagram with exact rows
\[ \xymatrix @C=1cm @R=1cm { 0 \ar[r] & I^\infty_l \ar@{->>}[d] \ar[r] & L_l \ar[d] \ar[r] & Q_l^\infty \ar[d] \ar[r] & 0 \\ 
 0 \ar[r] & I_m^\infty \ar[r] & L_m \ar[r] & Q_m^\infty \ar[r] & 0\,.} \]
By the snake lemma we get the following exact sequence for the kernels of the vertical maps
\[ 0 \longrightarrow K_m^l\cap I_l^\infty \longrightarrow K_m^l \longrightarrow P^l_m \longrightarrow 0\]
with $P^l_m=Q^\infty_l\in \altP((\phi-\eta,1])$ for $l\gg m$ as $\im(L_l \rightarrow L_m)=I^\infty_m$. In particular, using $m=n$ we get $\rk( I^\infty_l)\le \rk(K^l_n) +\rk(I^\infty_n) \le C+\rk(I_n^\infty)=C'$ for all $l>n$. Since the sequence $\rk(I^\infty_l)$ increases with $l$, we conclude $K_m^l\cap I_l^\infty=0$ for all $l>m\gg n$ because $K_m^l$ is torsionfree. Thus, for $l\gg m\gg n$ we get $K_m^l\cong Q_l^\infty$ and $K_m^l=Q^\infty_l=0$ follows since $\altP((0,\phi+\eta-1))\cap \altP((\phi-\eta,1])=0$. This shows $E_m/E_l=0$ for $l\gg m\gg n$ and we conclude $E_{q+1}=E_q$ for $q\gg 0$. Thus, $\altP((\phi-\eta,\phi+\eta))$ is artinian.\\
\newline
To show that $\altP((\phi-\eta,\phi+\eta))$ is noetherian, we consider an ascending chain $E_1 \subset E_2 \subset \ldots\subset E$ of strict subobjects of $E$ and try to find arguments similar to the artinian case. By the long exact cohomology sequences we obtain an ascending sequence $H^{-1}(E_1) \subset H^{-1}(E_2) \subset \ldots \subset H^{-1}(E)$ and conclude $H^{-1}(E_n)=H^{-1}(E_{n+1})=\mathrel{\mathop{:}} H$ for $n\gg 0$ since $\Coh_{(1)}(X)$ is noetherian. In particular, this yields the following exact sequences for $n>m\gg 0$
\[ 0 \longrightarrow H^{-1}(E_n/E_m) \longrightarrow H^0(E_m) \longrightarrow H^0(E_n) \longrightarrow H^0(E_n/E_m) \longrightarrow 0.\]
Let us introduce the following shorthands for $n>m\gg 0$
\begin{eqnarray*} &&L_m \mathrel{\mathop{:}}=H^0(E_m)\in \altP((\phi-\eta,1]), \quad I^m_n\mathrel{\mathop{:}}=\im (L_m \longrightarrow L_n) \in \altP((\phi-\eta,1]) ,\\ 
&&K^m_n\mathrel{\mathop{:}}=H^{-1}(E_n/E_m) \in \altP((0,\phi+\eta-1)),\; Q^m_n\mathrel{\mathop{:}}=H^0(E_n/E_m)\in \altP((\phi-\eta,1]). \end{eqnarray*}
Thus, the upper four-term sequence splits into two short exact sequences
\[ 0 \longrightarrow K^m_n \longrightarrow L_m \longrightarrow I^m_n \longrightarrow 0\quad \mbox{and}\quad 0 \longrightarrow I^m_n \longrightarrow L_n \longrightarrow Q^m_n \longrightarrow 0.\]
Since $\Coh_{(1)}(X)$ is noetherian, we get $K^m_n=K^m_{n+1}=\mathrel{\mathop{:}}K^m_\infty$ and $I^m_n=I^m_{n+1}=\mathrel{\mathop{:}}I^m_\infty$ for $n\gg m$. The natural maps $I^m_p \longrightarrow I^n_q$ for $q\ge p >n\ge m$ induce a morphism $I^m_\infty \longrightarrow I^n_\infty$ and we obtain the following diagram with exact rows for $q\ge p \gg n\ge m$
\[ \xymatrix @R=1cm @C=1cm { 0 \ar[r] & I^m_\infty \ar[d] \ar[r] & L_p \ar[d] \ar[r] & Q^m_p \ar[d] \ar[r] & 0 \\ 0 \ar[r] & I^n_\infty \ar[r] & L_q \ar[r] & Q^n_q \ar[r] & 0\,. } \]
In the case $q > p\gg m=n$  the snake lemma implies the following exact sequence
\begin{equation} \label{eqQ} 0 \longrightarrow K^p_q \longrightarrow Q^m_p \longrightarrow Q^m_q \longrightarrow Q^p_q \longrightarrow 0\,.
\end{equation}
On the other hand, using the special case $q=p\gg n\ge m$ we see that $I^m_\infty \longrightarrow I^n_\infty$ is a monomorphism and we get the short exact sequence
\[ 0 \longrightarrow I^n_\infty/I^m_\infty \longrightarrow Q^m_q \longrightarrow Q^n_q \longrightarrow 0\,.\]
If we replace $p$ by $n$ in the first case and combine the result with the second special case, we conclude 
\begin{equation} \label{eqI} I^n_\infty/I^m_\infty \cong Q^m_n/K^n_q  \qquad\mbox{for all } q>n\gg m .
\end{equation}
To proceed we consider the long exact cohomology sequence associated to $0 \longrightarrow E_n \longrightarrow E \longrightarrow E/E_n \longrightarrow 0$ and use $H^{-1}(E_n)=H$ for all $n\gg 0$
\[ 0 \longrightarrow H \longrightarrow H^{-1}(E) \longrightarrow H^{-1}(E/E_n)\longrightarrow L_n \longrightarrow H^0(E) \longrightarrow H^0(E/E_n) \longrightarrow 0.\]
Since $\Coh_{(1)}(X)$ is noetherian, we get $Z(H^{-1}(E/E_n))=Z(L_n)+ Z(H^{-1}(E))-Z(H)-Z(I)$ for all $n\gg 0$, where $I\subset H^0(E)$ is the `limit' of the ascending chain of the images of $L_n$ in $H^0(E)$. This equation shows $|Z(L_n)|\le C$ for all $n$, where $C$ is a constant. Indeed, if $|Z(L_n)|$ is not bounded, we obtain $Z(H^{-1}(E/E_{n_p}))/Z(L_{n_p}) \xrightarrow{p\rightarrow \infty} 1$  for a subsequence $(n_p)_{p\in \mathbb{N}}$ which leads to  a contradiction to $H^{-1}(E/E_{n_p})\in \altP((0,\phi+\eta-1))$ and $L_{n_p}\in \altP((\phi-\eta,1])$ as in the artinian case. Using the exact sequence
\[ 0 \longrightarrow I^m_\infty \longrightarrow L_n \longrightarrow Q^m_n \longrightarrow 0\] 
for $n\gg m$, we conclude $|Z(I^m_\infty)|\le C'$ for all $m$, because otherwise there is a subsequence $I^{m_p}_\infty$ with $\Rea Z(I^{m_p}_\infty) \longrightarrow -\infty$ and, thus, $\Rea Z(Q^{m_p}_{n_p})\longrightarrow +\infty$ which contradicts $Q^{m_p}_{n_p}\in \altP((\phi-\eta,1])$ using $\rk(Q^{m_p}_{n_p})\le C$. As $I^m_\infty \subset I^{m+1}_\infty$, we get $\rk(I^{m}_\infty)=\rk(I^{m+1}_\infty)$ for $m\gg 0$. If the sequence $(I^m_\infty)_{m\in \mathbb{N}}$ is not stationary, we obtain the contradiction $\Rea Z(I^n_\infty)\longrightarrow -\infty$ to $|Z(I^m_\infty)|\le C'$ because $Z(I^{m+1}_\infty/I^m_\infty)\le -\varepsilon$ for all $m\gg 0$ with $I^{m}_\infty\neq I^{m+1}_\infty$. Thus, by equation (\ref{eqI}) $Q^m_n=K^n_q$ for all $q>n\gg m$ and $m\gg 0$ fixed. Using $K^n_q\in \altP((0,\phi+\eta-1))$ and $Q^m_n\in \altP((\phi-\eta,1])$, we get $Q^m_n=0$ for all $n\gg m$ and by (\ref{eqQ}) $K^p_q=Q^p_q=0$, i.e.\ $E_p = E_q$ for all $q>p\gg 0$. Thus, $\altP((\phi-\eta,\phi+\eta))$ is noetherian.
\end{proof}
Finally, we get the following theorem by combining the previous two propositions.
\begin{theorem} \label{classification}
In the $\Glt$-orbit of every locally-finite numerical stability condition on $\DER^b_{(1)}(X)$ there is a stability condition with heart $\Coh_{(1)}(X)$ and central charge $Z(E)=-\omega.\ce_1(E)+ i\rk(E)$, where $\omega\in \Num_1(X)_\mathbb{R}$ is determined by the orbit up to some positive scalar $r\in \mathbb{R}$. The set of all $\Glt$-orbits in $\stab(\DER^b_{(1)}(X))$ is parametrized by the rays in the convex cone 
\[ C(X)=\big\{\omega\in \Num_1(X)_\mathbb{R} \mid \inf\{ \omega.D\mid D\subset X \mbox{ an effective divisor } \}>0 \big\}.\]
\end{theorem}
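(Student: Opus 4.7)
The plan is to combine Propositions \ref{slicings} and the preceding Proposition about $\omega \in C(X)$ into a single normal form statement, and then to carefully cut down the $\Glt$-redundancy to obtain the ray parametrization.

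First, start with an arbitrary locally-finite numerical stability condition $\sigma=(Z,\altP)$. By Proposition \ref{slicings} there exists a unique $\psi\in\mathbb{R}$ with $\altP((\psi,\psi+1])=\Coh_{(1)}(X)$. Choosing the element $(g_\psi,f_\psi)\in\Glt$ with $f_\psi(\phi)=\phi-\psi$ and $g_\psi$ the corresponding rotation in $\Gl2$, the translated stability condition $\sigma\cdot(g_\psi,f_\psi)$ satisfies $\altP'((0,1])=\Coh_{(1)}(X)$, so without loss of generality we may assume $\altP((0,1])=\Coh_{(1)}(X)$.

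Now apply the first part of the proof of Proposition 4.4 verbatim: using the pairing $\Num^1(X)_\mathbb{R}\times\Num_1(X)_\mathbb{R}\to\mathbb{R}$, the numerical assumption forces $Z$ to have the form $Z(E)=-\omega.\ce_1(E)+a\rk(E)+i(b\rk(E)+\beta.\ce_1(E))$ for some $\omega,\beta\in\Num_1(X)_\mathbb{R}$ and $a,b\in\mathbb{R}$; positivity of the imaginary part on every line bundle forces $\beta=0$, and non-finiteness of $\Coh_{(1)}(X)$ forces $b>0$. Applying an element of $\Glt$ in the stabilizer of the heart $\Coh_{(1)}(X)$ (that is, an $(g,f)$ with $f(0)=0$) we may normalize to $b=1$ and $a=0$, leaving $Z(E)=-\omega.\ce_1(E)+i\rk(E)$. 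Proposition 4.4 then identifies precisely the $\omega$'s that arise with elements of $C(X)$, and conversely exhibits a locally-finite numerical stability condition for each such $\omega$.

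It remains to show the map $\{\text{orbits}\}\to\{\text{rays in }C(X)\}$, $[\sigma]\mapsto \mathbb{R}_{>0}\omega$, is well-defined and bijective. Surjectivity is the converse half of Proposition 4.4. For injectivity, suppose two normalized representatives $(Z_\omega,\Coh_{(1)}(X))$ and $(Z_{\omega'},\Coh_{(1)}(X))$ lie in the same $\Glt$-orbit via $(g,f)$. Equality of hearts together with the uniqueness clause of Proposition \ref{slicings} forces $f(0)=0$ and $f(1)=1$. Evaluating $Z_\omega=g\circ Z_{\omega'}$ on $\ho_X$ gives $g(i)=i$, and evaluating on sheaves $\ho_X(D)$ with varying $D$ forces $g$ to preserve every horizontal line in $\mathbb{R}^2$; together these constrain $g$ to $\operatorname{diag}(r,1)$ with $r>0$, whence $\omega'=r\omega$.

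The only real work is the normalization in the second paragraph and the injectivity computation in the third; both are short once one observes that the heart-preserving part of $\Glt$ acts on the plane through upper-triangular matrices, and the condition $Z(\ho_X)=i$ pins the unipotent part. Everything else is an invocation of Propositions \ref{slicings} and 4.4.
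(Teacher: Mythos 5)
Your proposal is correct and follows essentially the same route as the paper: the paper's proof of this theorem consists precisely of combining Proposition \ref{slicings} with the preceding proposition on $C(X)$ and the normalization argument given just before it, and then declares the remaining claim --- that $Z_\omega$ and $Z_{\omega'}$ lie in the same $\Gl2$-orbit iff $\omega'=r\omega$ with $r>0$ --- an easy calculation left to the reader. Your third paragraph simply carries out that calculation (pinning $f(0)=0$ via the uniqueness in Proposition \ref{slicings}, then using $Z(\ho_X)=i$ and the values on the $\ho_X(D)$ to force $g=\operatorname{diag}(r,1)$), which is a welcome filling-in rather than a different approach.
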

\begin{proof} It remains to show that $Z(E)=-\omega.\ce_1(E) +i\rk(E)$ and $Z'(E)=-\omega'.\ce_1(E) +i\rk(E)$ are in the same $\Gl2$-orbit if and only if $\omega'=r\,\omega$ for some $r\in \mathbb{R}_{>0}$. This easy calculation is left to the reader. 
\end{proof}
\textbf{Remark.} If the reader looks carefully at the previous proofs, they will realize that we have not used the fact that the stability condition is numerical. Thus, we have classified all locally-finite stability condition on $\DER^b_{(1)}(X)$. Theorem \ref{classification} generalizes literally if we replace $C(X)$ by the cone
\[ \big\{ \omega\in \Hom(\Pic(X),\mathbb{R}) \mid \inf\{\omega(\ho_X(D))\mid D\subset X \mbox{ an effective divisor } \}>0 \big\}.\]
\vspace{0.2cm}

\subsection{The topology of the space of stability conditions}
 
This subsection is devoted to the topology of the space $\stab(\DER^b_{(1)}(X))$. It turns out that every orbit $\sigma\cdot\Glt$ is a connected component. \\
Let us consider a connected component $\Sigma$ of $\stab(\DER^b_{(1)}(X))$. By Theorem \ref{bigtheorem} there is a complex linear space $V(\Sigma)\subseteq \Hom_\mathbb{Z}(\mathbb{Z}\oplus \Num^1(X),\mathbb{C})$ such that $\pi:\Sigma \rightarrow V(\Sigma)$ is a local homeomorphism. If we fix a stability condition $\sigma=(Z,\altP)\in \Sigma$, the space $V(\Sigma)$ is given by 
\[ V(\Sigma)=\{U\in \Hom_\mathbb{Z}(\mathbb{Z}\oplus \Num^1(X),\mathbb{C})\mid  \|U\|_\sigma < \infty\}, \] 
where
\[ \| U \|_\sigma \mathrel{\mathop{:}}= \sup\left\{ \left. \frac{|U(E)|}{|Z(E)|} \;\right| E \mbox{ semistable in } \sigma\right\}. \]
Assume that $V(\Sigma)$ and, hence, $\Sigma$ are of complex dimension two. As the $\Glt$-orbit of $\sigma$ has four real dimensions, the orbit is open in $\Sigma$. Furthermore, the image of the central charge $W$ of a stability condition $\tau=(W,\altQ)$ in the boundary of $\sigma\cdot\Glt$  is contained in a real line in $\mathbb{C}$ since otherwise $\pi(\tau\cdot \Glt)=W\cdot \Gl2$ is open in $V(\Sigma)$ and, therefore, not contained in the boundary of $Z\cdot \Gl2$. As this contradicts Theorem \ref{classification}, $\sigma\cdot\Glt$ is also closed in $\Sigma$ and 
\[ \sigma\cdot \Glt = \Sigma\]
follows. It remains to show $\dim_\mathbb{C} V(\Sigma) =2$ for every connected component $\Sigma$ of $\stab(\DER^b_{(1)}(X))$. Since $\dim_\mathbb{R} \sigma\cdot \Glt=4$ for every $\sigma=(Z,\altP)\in \Sigma$ by Theorem \ref{classification}, it suffices to find a contradiction if $\dim_\mathbb{C} V(\Sigma) >2$. Let us start with the case $\altP((0,1])=\Coh_{(1)}(X)$ and $Z(E)=-\omega.\ce_1(E)+i\rk(E)$ with rational $\omega$ before considering the general case. If $\dim_\mathbb{C} V(\Sigma) >2$, there is another stability condition $\sigma'=(Z',\altP')\in \Sigma$ not contained in the $\Glt$-orbit of $\sigma$. We can assume $Z'(E)=-\omega'.\ce_1(E) + i\rk(E)$ and $\omega'\notin \mathbb{R}\omega$ by Theorem \ref{classification}. Since the intersection product $\Num_1(X)\times \Num^1(X) \longrightarrow \mathbb{Z}$ is non-degenerate and $\omega\in \Num_1(X)_\mathbb{Q}=\Num_1(X)\otimes \mathbb{Q}$, there is an integral divisor $D$ on $X$ such that $\omega.\ce_1(\ho_X(D))=0$ but $\omega'.\ce_1(\ho_X(D))=\mathrel{\mathop{:}}\Delta>0$. Obviously, $\ho_X(mD)$ is $\sigma$-semistable for all $m\in \mathbb{N}$ and, therefore,
\[ \frac{|Z'(\ho_X(mD))|}{|Z(\ho_X(mD))|}=\sqrt{\frac{(m\Delta)^2+1}{1}} \le \| Z'\|_\sigma<\infty \qquad \mbox{for all }m\in \mathbb{N} \]
which is a contradiction. In the general case $\omega\in C(X)$ the divisor $D$ is just an $\mathbb{R}$-divisor, but we can use the fact that $\Num^1(X)_\mathbb{Q}$ is dense in $\Num^1(X)_\mathbb{R}$ to construct $\mathbb{Q}$-divisors $D_m$ in the neighbourhood of $mD$ such that $|\omega.D_m|<\delta$ and $\omega'.D_m > m\Delta-\delta>0$ for all $m\in \mathbb{N}$, where $0<\delta<\Delta$ is some small real number. If we choose $r_m\in \mathbb{N}$ such that $r_mD_m$ is integral, we obtain
\[ \| Z'\|_\sigma \ge \frac{|Z'(\ho_X(r_mD_m))|}{|Z(\ho_X(r_mD_m))|}\ge \sqrt{\frac{(r_mm\Delta-r_m\delta)^2+1}{(r_m\delta)^2+1}}  \qquad \mbox{for all }m\in \mathbb{N} \]
contradicting $\| Z'\|_\sigma<\infty$. Thus, we have proved the following theorem.
\begin{theorem}
Let $X$ be an irreducible smooth projective variety of dimension $\dim(X)\ge 2$ and $\DER^b_{(1)}(X)$ the quotient category of $\Der(X)$ by the full subcategory of complexes supported in codimension $c>1$. Then, $\Glt$ acts free on $\stab(\DER^b_{(1)}(X))$ and any $\Glt$-orbit is a connected component of the complex manifold $\stab(\DER^b_{(1)}(X))$. The space of connected components is parametrized by the set of rays in the convex cone 
\[ C(X)=\big\{\omega\in \Num_1(X)_\mathbb{R} \mid \inf\{ \omega.D\mid D\subset X\mbox{ an effective divisor on }X \}>0 \big\}.\]
For each $\omega\in C(X)$ there is a unique stability condition in the component associated to $\mathbb{R}_{>0}\omega$ with heart $\Coh_{(1)}(X)$ and central charge $Z(E)=-\omega.\ce_1(E)+i\rk(E)$. 
\end{theorem}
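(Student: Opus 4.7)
The approach is to combine the classification Theorem~\ref{classification} with a dimension count on the spaces $V(\Sigma)$ provided by Bridgeland's deformation theorem. Theorem~\ref{classification} supplies, for each ray $\mathbb{R}_{>0}\omega\subset C(X)$, a locally-finite numerical stability condition $\sigma_\omega=(Z_\omega,\altP_\omega)$ with heart $\Coh_{(1)}(X)$ and central charge $Z_\omega(E)=-\omega.\ce_1(E)+i\rk(E)$, and it asserts that every locally-finite numerical stability condition is $\Glt$-equivalent to some $\sigma_\omega$ with $\omega$ determined up to a positive scalar. From this the freeness of the $\Glt$-action is essentially automatic: a stabilizer element of $\sigma_\omega$ would fix the central charge $Z_\omega$ under the $\Gl2$-action on $\mathbb{C}$, and this action is visibly free on central charges of the form $-\omega.\ce_1+i\rk$.

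The heart of the plan concerns the topology. By Theorem~\ref{bigtheorem} the projection $\pi\colon\Sigma\to V(\Sigma)\subset \Hom_\mathbb{Z}(\altN,\mathbb{C})$ is a local homeomorphism onto a complex linear subspace, and since $\Glt$ is a $4$-real-dimensional Lie group acting freely each $\Glt$-orbit is a $2$-complex-dimensional submanifold of $\Sigma$. The key reduction is to show that $\dim_\mathbb{C} V(\Sigma)=2$ for any component $\Sigma$ meeting some $\sigma_\omega$. Granting this, the orbit $\sigma_\omega\cdot\Glt$ is open in $\Sigma$; and it is also closed, because a boundary point would land in another orbit $\sigma_{\omega'}\cdot\Glt$ whose image under $\pi$ would again be open by dimension, contradicting the fact from Theorem~\ref{classification} that $V(\Sigma)$ is exfoliated by the one-real-parameter family of rays $\{\mathbb{R}_{>0}\omega''\}$ and the two-real-parameter $\Gl2$-fibres.

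The main obstacle is bounding $\dim_\mathbb{C} V(\Sigma)\le 2$. Using the description
\[ V(\Sigma)=\{U\mid \|U\|_{\sigma_\omega}<\infty\},\qquad \|U\|_{\sigma_\omega}=\sup\big\{|U(E)|/|Z_\omega(E)|:E\text{ semistable}\big\},\]
I would suppose for contradiction that some $U\in V(\Sigma)$ produces a central charge associated (after normalising the $\rk$-component) to a class $\omega'\in\Num_1(X)_\mathbb{R}$ not proportional to $\omega$. The strategy is to exhibit $\sigma_\omega$-semistable test objects whose ratio $|U(\cdot)|/|Z_\omega(\cdot)|$ blows up. Natural candidates are the line bundles $\ho_X(mD)$, which are $\sigma_\omega$-semistable since they are simple in $\Coh_{(1)}(X)$ on the torsionfree side; one computes
\[\frac{|U(\ho_X(mD))|}{|Z_\omega(\ho_X(mD))|}=\sqrt{\frac{(m\,\omega'.D)^2+1}{(m\,\omega.D)^2+1}}.\]
Thus the task reduces to choosing a divisor class $D$ with $|\omega.D|$ small but $|\omega'.D|$ forced to grow linearly in $m$.

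The cleanest case is when $\omega\in\Num_1(X)_\mathbb{Q}$: nondegeneracy of the intersection pairing then yields an \emph{integral} divisor $D$ with $\omega.D=0$ and $\omega'.D>0$, and taking $m\to\infty$ gives the contradiction. The delicate case is irrational $\omega\in C(X)$, handled by density of $\Num^1(X)_\mathbb{Q}$ in $\Num^1(X)_\mathbb{R}$: for any small $\delta>0$ one approximates the desired real divisor by $\mathbb{Q}$-divisors $D_m$ with $|\omega.D_m|<\delta$ and $\omega'.D_m\ge m\Delta-\delta$ for fixed $\Delta>0$, then clears denominators $r_m$ so that $r_mD_m$ is integral, obtaining line bundles $\ho_X(r_mD_m)$ for which the ratio above still tends to infinity. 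This bookkeeping of simultaneous rational approximations is the main technical nuisance, but it is what converts the classification theorem into the component statement and thereby completes the proof.
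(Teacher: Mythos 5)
Your argument is essentially the paper's own proof step for step: the open-and-closed orbit argument reduces everything to the bound $\dim_{\mathbb{C}}V(\Sigma)\le 2$, which both you and the authors obtain by testing $\|\cdot\|_{\sigma_\omega}$ on the line bundles $\ho_X(mD)$, handling rational $\omega$ via nondegeneracy of the intersection pairing and general $\omega\in C(X)$ via rational approximation and clearing denominators. One small correction: $\ho_X(mD)$ is not simple in $\Coh_{(1)}(X)$ (it has torsion quotients); it is semistable because it is rank-one torsionfree, so every nonzero subobject has the same rank and real part of central charge at least $\Rea Z_\omega(\ho_X(mD))$.
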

Since  $\Glt$-orbits are always connected, $\stab(\DER^b_{(1)}(X))$ is as disconnected as it could be. In contrast to this, the parameter space of $\Glt$-orbits which is the set of rays in $C(X)$ is connected.  \\

\section{Birational geometry and $\rm D^b_{(1)}(X)$}

In the last section we classify all exact $k$-equivalences between quotient categories $\DER^b_{(1)}(X)$ and $\DER^b_{(1)}(Y)$ for $\dim(X)\ge 2$. Note that $\DER^b_{(1)}(X)\cong \DER^b_{(1)}(Y)$ and $\dim(X)\ge 2$ implies $\dim(Y)\ge 2$ by Proposition \ref{HomExt}. All varieties are assumed to be irreducible smooth and projective. It turns out that the quotient category $\DER^b_{(1)}(X)$ determines $X$ as an object in $\Var_{(1)}$. At the end of this section we give a short discussion of the case $c>1$ and prove the non-existence of a Serre functor on the quotient category if $\dim(X)\ge 2$.\\ 
We start our classification of exact $k$-equivalences $\Psi:\DER^b_{(1)}(X) \longrightarrow \DER^b_{(1)}(Y)$ in analogy to the classification of slicings. Note that the standard t-structure is a slicing on $\DER^b_{(1)}(X)$ which is mapped to another slicing $\altP$ on $\DER^b_{(1)}(X)$ by $\Psi$. To be precise, $\altP(\phi)=\Psi(\Coh_{(1)}(X))[\phi]$ for $\phi\in \mathbb{Z}$ and $\altP(\phi)=0$ otherwise. In particular, Corollary \ref{coro1} is valid and can be written as follows in our situation.
\begin{cor} 
Let $\altA=\altP(0)$ be the image of $\Coh_{(1)}(X)$. Every indecomposable object in the abelian category $\altA$ is up to isomorphism either a shifted indecomposable torsion sheaf or a shifted indecomposable torsionfree sheaf on $Y$.
\end{cor}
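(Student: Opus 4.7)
The plan is to realize this corollary as an immediate transport of Corollary \ref{coro1} along the equivalence $\Psi$. The standard t-structure on $\DER^b_{(1)}(X)$ gives rise to a slicing by placing $\Coh_{(1)}(X)[n]$ at integer phase $n$ and $0$ elsewhere, as recalled after Definition 2.1. Because $\Psi$ is an exact $k$-linear equivalence, the image family $\altP(\phi) := \Psi(\Coh_{(1)}(X)[\phi])$ for $\phi\in\mathbb{Z}$ (and $0$ otherwise) remains a slicing on $\DER^b_{(1)}(Y)$: the shift compatibility $\altP(\phi+1)=\altP(\phi)[1]$, the Hom-vanishing between higher and lower phases, and the existence of Harder--Narasimhan filtrations all transfer verbatim across the equivalence. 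By construction $\altA=\altP(0)=\Psi(\Coh_{(1)}(X))$, and a nonzero object of $\altA$ is exactly a semistable object of phase $0$ for $\altP$.

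Next I would check that $\dim(Y)\ge 2$, as already remarked in the introductory paragraph of this section: otherwise $\DER^b_{(1)}(Y)=\Der(Y)$ has finite-dimensional Hom-spaces, contradicting parts (3) and (7) of Proposition \ref{HomExt} applied on the $X$-side, where $\dim(X)\ge 2$ produces infinite-dimensional $\Hom_{(1)}$- and $\Ext^1_{(1)}$-spaces. Hence the hypotheses of Corollary \ref{coro1} are met on $Y$.

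Applying Corollary \ref{coro1} to the slicing $\altP$ on $\DER^b_{(1)}(Y)$ then yields the claim directly: every indecomposable semistable object of $\altP$ is, up to isomorphism, a shifted indecomposable torsion sheaf or a shifted indecomposable torsionfree sheaf on $Y$. Specialising to phase $0$ gives the statement for $\altA$.

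There is no genuine obstacle here; Corollary \ref{coro1} has already done the real work --- indecomposable complexes in $\DER^b_{(1)}(Y)$ split as shifts of sheaves because $\Coh_{(1)}(Y)$ has homological dimension one, and any sheaf splits as $\torssh(E)\oplus(E/\torssh(E))$ in $\Coh_{(1)}(Y)$ by the remark following Proposition \ref{HomExt}, forcing the indecomposable one to be either purely torsion or purely torsionfree. The only conceptual point is to verify that the standard slicing on the $X$-side transports to a slicing on the $Y$-side under $\Psi$, which is automatic from exactness.
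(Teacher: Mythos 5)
Your proposal is correct and follows the paper's own route exactly: the paper likewise transports the standard slicing of $\DER^b_{(1)}(X)$ along $\Psi$ to the slicing $\altP(\phi)=\Psi(\Coh_{(1)}(X))[\phi]$ on $\DER^b_{(1)}(Y)$ and then invokes Corollary \ref{coro1}, whose proof rests on the homological dimension one of $\Coh_{(1)}(Y)$ and the torsion/torsionfree splitting from the remark after Proposition \ref{HomExt}. Your extra check that $\dim(Y)\ge 2$ is the same observation the paper makes at the start of Section 5, so nothing is missing.
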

Unfortunately, both slicings are not locally-finite and we have to proceed in a different way. Note that $\altA$ has homological dimension one in $\DER^b_{(1)}(Y)$ as it is isomorphic to the heart $\Coh_{(1)}(X)$ of the standard t-structure on $\DER^b_{(1)}(X)$. In particular, every complex is a direct sum of shifted objects in $\altA$ and objects in $\altA$ are direct sums of indecomposable objects since $\Coh_{(1)}(X)$ has this property. Thus, every indecomposable sheaf is up to a shift contained in $\altA$ which is the converse statement of the upper corollary. We will show that the number of shifts is independent of the indecomposable sheaf. Indeed, let $F$ be an indecomposable torsion free sheaf and if we replace $\Psi$ by $[m]\circ\Psi$, we can assume $F\in \altA$. Let $G$ be another indecomposable sheaf which is, therefore, contained in $\altA[n]$ for some integer $n$. Using $\Ext^1_{(1)}(G,F)=\Hom_{(1)}(G,F[1])\neq 0$ (cf.\ Prop.\ \ref{HomExt}), we get $n\le 1$, and since $\altA$ has homological dimension one, we conclude $n\in\{0,1\}$. If $G$ is torsionfree, $\Ext^1_{(1)}(F,G)\neq 0$ yields $n=0$ by the same argument. If $G$ is a torsion sheaf, we argue as follows to exclude the case $n=1$. Assume the contrary and write $F=\Psi(E_1)$ and $G=\Psi(E_2)[1]$ for some indecomposable sheaves $E_1,E_2\in\Coh_{(1)}(X)$. As
\begin{eqnarray*} &\infty>\dim\Hom_{(1)}(F,F)=\dim\Hom_{(1)}(E_1,E_1),& \\ 
&\infty=\dim\Hom_{(1)}(G,G)=\dim\Hom_{(1)}(E_2,E_2),&\end{eqnarray*}
we see that $E_1$ is torsionfree and $E_2$ a torsion sheaf (cf.\ Prop. \ref{HomExt}). Hence,
\[ \infty=\dim \Ext^1_{(1)}(G,F)=\dim \Hom_{(1)}(E_2,E_1)=0\]
which is a contradiction. Thus $G\in \altA$ and we have proved the following proposition.
\begin{prop} \label{autoequiv}
For  any exact $k$-equivalence $\Psi:\DER^b_{(1)}(X)\longrightarrow \DER^b_{(1)}(Y)$ there is a unique integer $n$ such that $\Psi(\Coh_{(1)}(X))=\Coh_{(1)}(Y)[n]$.
\end{prop}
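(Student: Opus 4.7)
The plan is to study the heart $\altA := \Psi(\Coh_{(1)}(X))$ inside $\DER^b_{(1)}(Y)$. It is the heart of a bounded t-structure transported from the standard one on $\DER^b_{(1)}(X)$, hence an abelian category of homological dimension one. Uniqueness of $n$ is automatic, since $\Coh_{(1)}(Y) \cap \Coh_{(1)}(Y)[n] = 0$ for $n \neq 0$.

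For existence, I would first use the transported version of Corollary \ref{coro1} (applied to the slicing $\altP$ obtained by shifting $\Coh_{(1)}(X)$ through $\Psi$): every indecomposable object of $\altA$ is a shift of an indecomposable torsion or torsionfree sheaf on $Y$. Conversely, because $\altA$ has homological dimension one, every bounded complex in $\DER^b_{(1)}(Y)$ decomposes as a direct sum of shifted objects of $\altA$; in particular every indecomposable coherent sheaf $G$ on $Y$ lies in $\altA[n_G]$ for a unique integer $n_G$. Fix an indecomposable torsionfree sheaf $F$ on $Y$ and replace $\Psi$ by $[-n_F]\circ \Psi$, so that $F\in \altA$. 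Showing $n_G = 0$ for every indecomposable sheaf $G$ then gives $\Coh_{(1)}(Y) \subseteq \altA$; the reverse inclusion is forced, since each indecomposable of $\altA$ is already a shifted sheaf whose shift must then be zero.

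The shift $n_G$ is bounded a priori by Proposition \ref{HomExt}: the homological dimension of $\altA$ gives $n_G \geq 0$, and $\Ext^1_{(1)}(G,F)\neq 0$ forces $n_G \leq 1$. For torsionfree $G$, the additional non-vanishing $\Ext^1_{(1)}(F,G) \neq 0$ pins down $n_G = 0$. The main obstacle, and the decisive step, is ruling out $n_G = 1$ when $G$ is a torsion sheaf. Here I would transfer the obstruction back to $X$: write $F = \Psi(E_1)$ and $G = \Psi(E_2)[1]$ with $E_1, E_2 \in \Coh_{(1)}(X)$ indecomposable. The finiteness $\dim\Hom_{(1)}(F,F) < \infty$ together with the dichotomy of Prop.\ \ref{HomExt}(2),(4) forces $E_1$ torsionfree, while $\dim\Hom_{(1)}(G,G) = \infty$ (Prop.\ \ref{HomExt}(2)) forces $E_2$ torsion. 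Then
\[ \Ext^1_{(1)}(G,F) \;\cong\; \Hom_{(1)}(E_2, E_1) \;=\; 0 \]
by Prop.\ \ref{HomExt}(3), contradicting the infinite-dimensionality of $\Ext^1_{(1)}(G,F)$ asserted in Prop.\ \ref{HomExt}(6). Hence $n_G = 0$ throughout and the proof is complete.
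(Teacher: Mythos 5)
Your proposal is correct and follows essentially the same route as the paper: transport the standard heart to $\altA=\Psi(\Coh_{(1)}(X))$, use Corollary \ref{coro1} and the homological dimension one property to place each indecomposable sheaf in a unique shift $\altA[n_G]$, bound $n_G\in\{0,1\}$ via $\Ext^1_{(1)}(G,F)\neq 0$, and exclude $n_G=1$ for torsion $G$ by pulling back through $\Psi$ and deriving the contradiction $\infty=\dim\Ext^1_{(1)}(G,F)=\dim\Hom_{(1)}(E_2,E_1)=0$ from Proposition \ref{HomExt}. The only (harmless) addition is your explicit remark on uniqueness, which the paper leaves implicit.
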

Hence, up to shifts, an exact equivalence $\Psi$ is induced by an exact functor $F:\Coh_{(1)}(X)\longrightarrow \Coh_{(1)}(Y)$, i.e.\ $\Psi=[n]\circ \Der(F)$. As structure sheaves of integral divisors are simple objects in $\Coh_{(1)}(X)$, $F$ maps structure sheaves of integral divisors to those sheaves. Since every torsion sheaf is an extension of such sheaves, $F$ maps torsion sheaves to torsion sheaves and induces, therefore, an invertible exact functor $F_0:\Coh_{(0)}(X)\longrightarrow \Coh_{(0)}(Y)$ between the quotient categories of $\Coh_{(1)}(X)$ resp.\ $\Coh_{(1)}(Y)$ by the full Serre subcategories of torsion sheaves. This quotient categories  are equivalent to the categories of finite-dimensional vector spaces over the function fields of $X$ resp.\ $Y$ (cf.\ Prop.\ \ref{case0}). The induced functor $F_0$ is, therefore,  determined by the isomorphism $K(X)\cong\Hom_{(0)}(\ho_X,\ho_X) \longrightarrow \Hom_{(0)}(\ho_Y,\ho_Y)\cong K(Y)$, i.e. by a birational map $\psi:Y\dashrightarrow X$. Note that $\psi$ depends functorially on $\Psi$. In particular, we obtain a group homomorphism $\Aut(\DER^b_{(1)}(X))\longrightarrow \Bir(X)$. It is obvious, that the kernel contains $\Pic(X)\oplus \mathbb{Z}$ acting by tensor products and shifts. It turns out, that $\Psi$ is uniquely determined by $\psi$ up to tensor products with line bundles and shifts.
\begin{theorem} \label{functordecomp}
Let $X$ and $Y$ be two irreducible smooth projective varieties of dimension at least two. Any exact $k$-equivalence $\Psi:\DER^b_{(1)}(X)\longrightarrow \DER^b_{(1)}(Y)$ has a unique decomposition $\Psi=[n]\circ L\circ \psi^\ast$ by a shift functor, a tensor product with a line bundle $L\in \Pic(X)$ and a pullback functor induced by a birational map $\psi:Y \dashrightarrow X$ which is an isomorphism in codimension one.
\end{theorem}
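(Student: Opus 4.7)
The plan is as follows. First I would invoke Proposition \ref{autoequiv} to reduce to the case $n=0$, so that $\Psi$ comes from an exact equivalence of abelian categories $F:\Coh_{(1)}(X)\longrightarrow \Coh_{(1)}(Y)$. Torsion sheaves are precisely the finite-length objects of $\Coh_{(1)}(\cdot)$, so $F$ preserves the torsion Serre subcategory and descends to an equivalence $F_0:\Coh_{(0)}(X)\longrightarrow \Coh_{(0)}(Y)$; by Proposition \ref{case0} this amounts to a field isomorphism $K(X)\cong K(Y)$ and hence to a birational map $\psi:Y\dashrightarrow X$. A direct inspection via Corollary \ref{Ext} shows that the simple objects of $\Coh_{(1)}(\cdot)$ are the structure sheaves $\ho_D$ of irreducible divisors, so $F$ induces a bijection $D\leftrightarrow D'$ between irreducible divisors on $X$ and on $Y$.

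Next I would normalise by a line bundle. By Proposition \ref{HomExt} (4), $F(\ho_X)$ is reflexive of rank one in $\Coh_{(1)}(Y)$, hence isomorphic to a line bundle $L\in \Pic(Y)$; setting $G:=L^{-1}\otimes F$ gives $G(\ho_X)\cong \ho_Y$. Applying $G$ to the ideal sequence $0\to \ho_X(-D)\to \ho_X\to \ho_D\to 0$ realises $G(\ho_X(-D))$ as an ideal sheaf of $D'$ in $\ho_Y$, which by smoothness of $Y$ must be $\ho_Y(-D')$. Consequently $G$ induces an isomorphism
\[\Gamma(X,\ho_X(D))=\Hom_{(1)}(\ho_X(-D),\ho_X)\;\xrightarrow{\sim}\;\Hom_{(1)}(\ho_Y(-D'),\ho_Y)=\Gamma(Y,\ho_Y(D')),\]
and since $G$ realises the field isomorphism $\psi^\ast$ on $\Coh_{(0)}$, this isomorphism must coincide with the restriction of $\psi^\ast$. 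In particular every $s\in\Gamma(X,\ho_X(D))$ satisfies $\psi^\ast(s)\in \Gamma(Y,\ho_Y(D'))$, and the converse follows by applying the same argument to $G^{-1}$.

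The hard part will be deducing from this biconditional that $\psi$ is an isomorphism in codimension one. The subspace $\Gamma(X,\ho_X(D))\subset K(X)$ is cut out by the valuation conditions $v_D(f)\ge -1$ and $v_{D_i}(f)\ge 0$ for the remaining irreducible divisors $D_i\subset X$, and similarly on the $Y$-side via $\psi^\ast$. Equality of the two descriptions, for all $D$, forces the set of divisorial valuations of $K(X)$ coming from divisors on $X$ to coincide (under $\psi^\ast$) with those coming from divisors on $Y$; in particular $v_D=v_{D'}\circ \psi^\ast$. For $X$ and $Y$ smooth projective, this rules out any $\psi$- or $\psi^{-1}$-exceptional divisor, so $\psi$ is an isomorphism in codimension one and $D'$ is the strict transform $\tilde D$ of $D$ under $\psi^{-1}$.

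Finally I would compare $G$ with the pullback. With $\psi$ an isomorphism in codimension one, the functor $\psi^\ast:\DER^b_{(1)}(X)\longrightarrow \DER^b_{(1)}(Y)$ is a well-defined exact equivalence agreeing with $G$ on all $\ho_X(-D)$ and on morphisms between them. Resolving arbitrary coherent sheaves by line bundles $\ho_X(-mH)^{\oplus k}$ via Serre's theorem and extending the natural isomorphism from this generating subcategory yields $G\cong \psi^\ast$, so $F\cong L\otimes\psi^\ast$. Writing $L=\psi^\ast L_0$ for the unique $L_0\in\Pic(X)$ (using $\psi^\ast:\Pic(X)\xrightarrow{\sim}\Pic(Y)$) gives the decomposition $\Psi=[n]\circ(L_0\otimes-)\circ \psi^\ast$. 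Uniqueness is immediate: evaluating on $\ho_X$ fixes $n$ and $L_0$, and evaluating on the endomorphism ring of $\ho_X$ in $\Coh_{(0)}$ recovers $\psi^\ast$ and hence $\psi$.
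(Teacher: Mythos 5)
Your overall strategy matches the paper's: reduce to $n=0$ via Proposition \ref{autoequiv}, peel off a line bundle using that $F(\ho_X)$ is torsionfree of rank one, extract $\psi$ from the induced equivalence of $\Coh_{(0)}$'s, prove $\psi$ is an isomorphism in codimension one, and then identify the normalized functor with $\psi^\ast$ by an extension argument from the line bundles $\ho_X(mH)$. The first two stages are sound. However, the two places where the paper has to work hardest are exactly the places where your argument has gaps.

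First, the codimension-one step. You establish $\psi^\ast\Gamma(X,\ho_X(D))=\Gamma(Y,\ho_Y(D'))$ only for \emph{irreducible} $D$, and then assert that this forces $v_D=v_{D'}\circ\psi^\ast$. That deduction fails at this level of generality: for a rigid prime divisor $D$ (e.g.\ a curve of negative self-intersection on a surface) one has $\Gamma(X,\ho_X(mD))=k$ for all $m$, so these spaces carry no information whatsoever about the valuation $v_D$. To pin down the valuations one needs the identity $\psi^\ast\Gamma(X,\ho_X(E))=\Gamma(Y,\ho_Y(E'))$ for \emph{arbitrary} effective divisors $E=\sum m_iD_i$ (so that for each $f\in K(X)^\ast$ one can test membership in $\Gamma(\ho_X(E))$ versus $\Gamma(\ho_X(E-D_i))$ with $E$ the polar divisor of $f$), and that in turn requires first showing $G(\ho_X(-E))\cong\ho_Y(-E')$, i.e.\ controlling $G$ on the non-simple torsion sheaves $\ho_E$ via a Jordan--H\"older/length-at-the-generic-point argument. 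None of this is in your write-up. The argument can be completed along these lines (and one must then also say why equal divisorial valuations have equal centers, to exclude exceptional divisors), but as written the key claim is unjustified. The paper avoids valuations entirely: it picks $D_0,D_\infty\in|\altL|$ separating the image of a putative exceptional divisor and derives a contradiction from the fact that $F(\ho_{D_0})$ is simple, hence the zero divisor of $F(s_0)$ is irreducible while the pullback of $f=s_0/s_\infty$ would vanish on both the strict transform of $D_0$ and the exceptional divisor.

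Second, the extension step. ``Resolving arbitrary coherent sheaves by line bundles $\ho_X(-mH)^{\oplus k}$ and extending the natural isomorphism from this generating subcategory'' is precisely the Bondal--Orlov ample-sequence argument, but the sequence $\ho_X(mH)$ is \emph{not} an ample sequence in $\Coh_{(1)}(X)$: condition (ii) fails because $\Ext^1_{(1)}(\ho_X(mH),F)\neq 0$ for every torsionfree $F$ (Proposition \ref{HomExt}). So the standard result cannot simply be invoked; the paper has to rerun the proof and repair the one step that breaks (lifting a morphism $A_1\rightarrow A_2$ to a morphism of the covers $L_m^{\oplus k}\rightarrow L_n^{\oplus l}$, which it does by passing back to $\Coh(X)$, where the sequence is genuinely ample). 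Moreover, before any extension one must choose the isomorphisms $G(\ho_X(mH))\cong\ho_X(mH)$ coherently in $m$ — i.e.\ produce a natural isomorphism on the full subcategory $\altH$ — which the paper achieves by showing $G$ induces the identity on the section ring $\bigoplus_m\Ho^0(X,\ho_X(mH))$ via the cocartesian-square argument. Your proposal treats both of these points as automatic; they are the technical core of Steps 3--4 of the paper's proof and need to be supplied.
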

Note that the theorem is still valid for curves of genus $g\neq 1$. The Fourier--Mukai transform with respect to the Poincar\'{e} bundle is an exact equivalence $\Der(X)\cong \Der(\hat{X})$ between the derived categories of elliptic curves without such a decomposition.
Before we prove the theorem, we will state some immediate consequences.
\begin{cor}
Two irreducible smooth projective varieties $X$ and $Y$ are isomorphic in codimension one if and only if their quotient categories $\DER^b_{(1)}(X)$ and $\DER^b_{(1)}(Y)$ are equivalent as $k$-linear triangulated categories.
\end{cor}
For the case $\dim(X)=\dim(Y)\le 1$ see the textbook \cite{HuybFourMuk}. Another way, to formulate the corollary is to say that $X$ and $Y$ are isomorphic in $\Var_{(1)}$ if and only if there is an exact $k$-equivalence between their quotient categories. Thus, the functor $\DER^b_{(1)}(-)$ on $\Var_{(1)}$ contains enough information to classify objects in $\Var_{(1)}$.
\begin{cor} \label{isomorphic}
Two irreducible smooth projective varieties $X$ and $Y$ of dimension $\dim(X)=\dim(Y)\le 2$ are isomorphic if and only if there is an exact $k$-equivalence between their quotient categories $\DER^b_{(1)}(X)$ and $\DER^b_{(1)}(Y)$. 
\end{cor}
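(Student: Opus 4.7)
The forward direction is immediate: an isomorphism $X\cong Y$ induces an exact equivalence $\Der(Y)\cong \Der(X)$ that descends to the quotient categories. For the converse, I would split into cases by dimension. If $\dim(X)=\dim(Y)=0$ both varieties are $\Spec(k)$ and the claim is trivial. If $\dim(X)=\dim(Y)=1$ the quotient $\DER^b_{(1)}(-)$ coincides with $\Der(-)$, since there are no coherent sheaves supported in codimension $>1$ on a curve, and the classical fact that smooth projective curves with equivalent bounded derived categories are isomorphic disposes of this case.

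The main case is $\dim(X)=\dim(Y)=2$. Given an exact $k$-equivalence $\Psi:\DER^b_{(1)}(X)\longrightarrow \DER^b_{(1)}(Y)$, I would apply Theorem \ref{functordecomp} to decompose $\Psi=[n]\circ L\circ \psi^{\ast}$, producing a birational map $\psi:Y\dashrightarrow X$ that is an isomorphism in codimension one. The corollary then reduces to the geometric statement that \emph{an isomorphism in codimension one between smooth projective surfaces is an isomorphism}. On a surface this means $\psi$ restricts to an isomorphism between open subsets $U\subset Y$ and $V\subset X$ whose complements are \emph{finite} sets of points.

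To prove this lemma I would pass to the graph $\Gamma_{\psi}\subset Y\times X$ and the two projections $p_1:\Gamma_{\psi}\to Y$ and $p_2:\Gamma_{\psi}\to X$. Both are projective and birational, and restrict to isomorphisms over $U$ and $V$ respectively. If $p_1$ were not an isomorphism, there would exist a point $y\in Y\setminus U$ with $p_1^{-1}(y)$ of positive dimension, hence containing a curve $C\subset X$. For any $x\in C\cap V$ the singleton fibre $p_2^{-1}(x)=\{(\psi^{-1}(x),x)\}$ (which holds since $p_2$ is an isomorphism over $V$) would force $y=\psi^{-1}(x)\in U$, a contradiction. Hence $C\subset X\setminus V$, which is impossible since $X\setminus V$ is finite. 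Thus $p_1$ has finite fibres, and being a birational projective morphism onto the smooth (in particular normal) surface $Y$, it is an isomorphism by Zariski's Main Theorem. By symmetry $p_2$ is also an isomorphism, and $\psi=p_2\circ p_1^{-1}:Y\to X$ is the desired isomorphism.

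The principal obstacle is this graph argument, and it is precisely where the surface hypothesis is used decisively: one needs the complements of $U$ and $V$ to be zero-dimensional to force a contradiction with the existence of the curve $C$. In dimension at least three the analogous statement fails (flops between Calabi--Yau threefolds provide standard counterexamples), consistent with the question posed at the end of the introduction remaining open for $d\ge 3$.
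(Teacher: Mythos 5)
Your proof follows the paper's (largely implicit) argument exactly: apply Theorem \ref{functordecomp} to extract a birational map $\psi:Y\dashrightarrow X$ that is an isomorphism in codimension one, and then invoke the fact --- asserted without proof in Subsection 3.1 of the paper (``if $\dim(X)=\dim(Y)\le c+1$ this implies $X\cong Y$ as varieties'') --- that such a map between smooth projective surfaces extends to an isomorphism. Your graph/Zariski's-Main-Theorem argument correctly supplies the proof of that geometric fact which the paper leaves to the reader, and your treatment of dimensions $0$ and $1$ matches the paper's reference to the classical curve case.
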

This result is well known for curves (cf.\ \cite{HuybFourMuk}). The surprising result is that the quotient category classifies irreducible smooth projective surfaces while the usual derived category does not. There are non-isomorphic K3-surfaces with equivalent derived categories. Roughly speaking, the usual derived category contains to much redundancies which allows `strange' functors.
\begin{cor}
For any irreducible smooth projective variety $X$ of dimension $\dim(X)\ge 2$ there is a natural exact sequence of groups
\[ 0 \longrightarrow \mathbb{Z}[1] \oplus \Pic(X) \longrightarrow \Aut(\DER^b_{(1)}(X)) \longrightarrow \Aut_{(1)}(X)\longrightarrow 0, \]
where the group on the right hand side is the group of all birational automorphisms of $X$ which are isomorphisms in codimension one, i.e. automorphisms of $X$ in the category $\Var_{(1)}$. 
\end{cor}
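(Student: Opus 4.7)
The plan is to read off the exact sequence directly from Theorem \ref{functordecomp} applied to the case $X=Y$. First I would define the right-hand map $\rho:\Aut(\DER^b_{(1)}(X))\longrightarrow \Aut_{(1)}(X)$ as follows. Given $\Psi$, Theorem \ref{functordecomp} produces a unique decomposition $\Psi=[n]\circ L\circ \psi^\ast$ with $\psi\in \Aut_{(1)}(X)$, and I set $\rho(\Psi)=\psi^{-1}$. Taking the inverse here is forced by the composition law: a short calculation using that shifts are central and that $\psi_1^\ast\circ (L_2\otimes -)=(\psi_1^\ast L_2)\otimes \psi_1^\ast(-)$ gives
\[ \Psi_1\circ\Psi_2=[n_1+n_2]\circ (L_1\otimes \psi_1^\ast L_2)\circ (\psi_2\circ\psi_1)^\ast, \]
so assigning the raw birational part would produce an anti-homomorphism, whereas $\Psi\mapsto \psi^{-1}$ is a genuine group homomorphism.

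Next I would check surjectivity of $\rho$: for any $\psi\in \Aut_{(1)}(X)$ the pullback functor $(\psi^{-1})^\ast$ constructed in Subsection 3.1 is an exact $k$-linear autoequivalence of $\DER^b_{(1)}(X)$, and by construction $\rho((\psi^{-1})^\ast)=\psi$. Then I would identify the kernel: if $\rho(\Psi)=\Id$, the uniqueness part of Theorem \ref{functordecomp} forces $\psi=\Id$, so $\Psi=[n]\circ L$ for a unique pair $(n,L)\in \mathbb{Z}\times \Pic(X)$. Since shift and tensor product with line bundles commute, the assignment $(n,L)\mapsto [n]\circ L$ is a group homomorphism $\mathbb{Z}[1]\oplus \Pic(X)\longrightarrow \Aut(\DER^b_{(1)}(X))$, and its injectivity together with its image coinciding with $\ker\rho$ follows once more from the uniqueness clause of Theorem \ref{functordecomp}. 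Naturality of the sequence is clear from the functorial dependence of $\psi$ on $\Psi$ already established in the discussion preceding Theorem \ref{functordecomp}.

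There is no real obstacle, since all the work is hidden in Theorem \ref{functordecomp}; the only thing that needs a moment of care is the bookkeeping of the composition law, to make sure one takes $\psi^{-1}$ rather than $\psi$ so that $\rho$ is a homomorphism, and to verify that $L_1\otimes \psi_1^\ast L_2$ together with $(\psi_2\circ\psi_1)^\ast$ really is the normal form of a composition, i.e.\ that no hidden natural isomorphism spoils uniqueness.
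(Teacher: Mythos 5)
Your proposal is correct and follows exactly the route the paper intends: the corollary is stated as an immediate consequence of Theorem \ref{functordecomp}, and your argument simply makes the bookkeeping explicit (surjectivity via $(\psi^{-1})^\ast$, kernel identification via the uniqueness clause). Your observation that one must send $\Psi$ to $\psi^{-1}$ to get a homomorphism rather than an anti-homomorphism is a correct refinement of the paper's remark that ``we obtain a group homomorphism $\Aut(\DER^b_{(1)}(X))\longrightarrow \Bir(X)$'', not a different approach.
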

This statement holds also for curves of genus $g\neq 1$ (cf. \cite{HuybFourMuk}). The correct sequence for  elliptic curves is (cf. \cite{HuybFourMuk}, Section 9.5) 
\[ 0 \longrightarrow 2\mathbb{Z} \times \big(\Aut(X) \ltimes \Pic(X)\big) \longrightarrow \Aut(\Der(C)) \xrightarrow{\ch} \SL(2,\mathbb{Z}) \longrightarrow 0.\] 
\begin{proof}[Proof of Theorem \ref{functordecomp}] We use the notion introduced before stating Theorem \ref{functordecomp}. \\
\textit{Step 1:} Note that $F(\ho_X)$ is torsionfree as it contains no simple objects of $\Coh_{(1)}(Y)$ and, moreover, it has rank one since $F(\ho_X)\cong \ho_Y$ in $\Coh_{(0)}(Y)$. Thus, the sheaf $F(\ho_X)$ is a line bundle outside a closed subset of codimension greater than one and this line bundle extends in a unique way to a line bundle $L$ on $X$. Hence, $\Psi(\ho_X)\cong L[n]$ for uniquely determined $n\in \mathbb{Z}$ and  $L\in \Pic(X)$. After replacing $\Psi$ by $L^{-1}\circ [-n]\circ \Psi$ we can assume $\Psi(\ho_X)= \ho_Y$.\\
\textit{Step 2:} We proceed by showing that $\psi:Y\dashrightarrow X$ is an isomorphism in codimension one. First of all, one can always assume that $\psi$ is defined on an open subset of $Y$ whose complement has codimension at least two. By applying the same arguments to $\psi^{-1}:X \dashrightarrow Y$ induced by $\Psi^{-1}$, it is enough to show that the exceptional locus of $\psi$, i.e.\ the locus, where $\psi$ is either not defined or no isomorphism, has no divisorial part $E$. To prove this, assume there is an exceptional divisor $0\neq E\subset Y$ of $\psi$. By the arguments above, $\psi$ is defined on a dense open subset $E'$ of $E$ and $\codim(\psi(E'))\ge 2$ by the general theory of birational maps. Take two irreducible effective divisors $D_0$ and $D_\infty$ on $X$ contained in the same linear system $|\altL|$ of a line bundle $\altL$ such that $\psi(E')\subset D_0$ and $\psi(E')\cap D_\infty=\emptyset$. Pick two corresponding sections $s_0$ and $s_\infty$ of $\altL$.  Thus, we get a rational function $f=s_0/s_\infty$ on $X$ with $\divisor(f)=D_0-D_\infty$. We let it to the reader to show that the rational function $f\circ \psi=F_0(f)$ is the quotient of the sections $F(s),F(s_\infty)$ of $F(\altL)$ which is again a line bundle (up to isomorphism) as seen in Step 1. The short exact sequence
\[ 0 \longrightarrow \ho_X \xrightarrow{s_0} \altL \longrightarrow \ho_{D_0} \longrightarrow 0\]
in $\Coh_{(1)}(X)$ is mapped by $F$ to the short exact sequence 
\[ 0 \longrightarrow \ho_Y \xrightarrow{F(s_0)} F(\altL) \longrightarrow F(\ho_{D_0}) \longrightarrow 0.\]
Thus, the structure sheaf of the zero divisor of the section $F(s_0)$ is isomorphic to the sheaf $F(\ho_{D_0})$ which is a simple object of $\Coh_{(1)}(Y)$. In other words, the zero divisor of $f\circ \psi$ is irreducible. By contruction it contains the exceptional divisor of $\psi$ and the strict transform of $\altD_0$ which contradicts the irreducibility. Hence, $E=0$ and $\psi$ is an isomorphism in codimension one. \\
The composition $\Psi'\mathrel{\mathop{:}}=(\psi^{-1})^\ast\circ \Psi$ is an exact autoequivalence on $\DER^b_{(1)}(X)$ with $\Psi'(\ho_X)=\ho_X$ and the induced rational map $\psi'$ is the identity. \\
\textit{Step 3:} Let us denote for simplicity the exact $k$-linear autoequivalence $\Psi'$ on $\DER^b_{1}(X)$ of Step 2 by $\Psi$. To prove the theorem we have to show  $\Psi\cong \Id$. Let us consider the sheaf $\Psi(\ho_X(H))$, where $H$ is some effective very ample divisor on $X$ defined by some section $\sigma\in \Ho^0(X,\ho_X(H))$. By the above arguments, $\Psi(\ho_X(H))$ is a line bundle $\ho_X(D)$ for some effective divisor $D$ on $X$ as we have the exact sequence 
\[0 \longrightarrow  \ho_X \xrightarrow{F(\sigma)}\Psi(\ho_X(H))=\ho_X(D) \longrightarrow \Psi(\ho_H) \longrightarrow 0,\]
and the sheaf on the right is a torsion sheaf which we can assume to be $\ho_D$. Take another $H_1\in|H|$ having no common components with $H$ and consider the rational function $f$ with $\divisor(f)=H-H_1$. Denote by $\ho_{D_1}$ the image $\Psi(\ho_{H_1})$. Since $\Psi$ acts as the identity on $K(X)$, we obtain $H-H_1=\divisor(f)=\divisor(\Psi(f))=D-D_1$. As $\Psi$ maps effective divisors to effective divisors, we conclude $\ho_H=\ho_D=\Psi(\ho_H)$ as $D$ has no common components with $D_1$. Thus, $\ho_X(mH)\cong \Psi(\ho_X(mH))$. Without violating our previous assumptions on $\Psi$, we can modify $\Psi$ by a suitable choice of isomorphism $\ho_X(mH)\cong \Psi(\ho_X(mH))$ to assure $\ho_X(mH)= \Psi(\ho_X(mH))$ for all $m\in\mathbb{Z}$ and  $\Psi(\sigma)=\sigma\in \Ho^0(X,\ho_X(H))$. Thus, we obtain an isomorphism of vector spaces
\[ \bigoplus_{m\in \mathbb{Z}} \Ho^0(X,\ho_X(mH)) \xrightarrow{\Psi} \bigoplus_{m\in \mathbb{Z}} \Ho^0(X,\ho_X(mH)).\]
Moreover, this isomorphism is compatible with the product structure. This is a consequence of the fact that the product of two sections $s\in \Ho^0(X,\ho_X(mH))$ and $t\in \Ho^0(X,\ho_X(nH))$ is the diagonal morphism in the following cocartesian square
\[\xymatrix @R=1cm @C=1cm { \ho_X \ar[r]^s \ar[d]_t \ar[dr]^{st} & \ho_X(mH) \ar[d] \\ \ho_X(nH) \ar[r] & \ho_X((n+m)H). }\]
As $\Psi$ maps cocartesian squares to cocartesian squares, this yields a ring isomorphism on $\bigoplus_{m\in \mathbb{Z}} \Ho^0(X,\ho_X(mH))$. Moreover, every section $t\in\Ho^0(X,\ho_X(mH))$ is fixed by $\Psi$ since $\sigma$ as well as the rational function $t/\sigma^m$ are fixed. Hence, $\Psi$ is (isomorphic to) the identity functor on the full subcategory $\altH$ spanned by the sequence $\ho_X(mH), m\in \mathbb{Z}$. \\
\textit{Step 4:} The next step is to extend this isomorphism $\Psi|_\altH\cong \Id_\altH$ to an isomorphism $\Psi\cong \Id$ on $\DER^b_{(1)}(X)$. This is possible, if the sequence $\ho_X(mH)$ is an ample sequence in $\Coh_{(1)}(X)$ by a result of Bondal and Orlov \cite{BondalOrlov01},\cite{Orlov97}. \\
A  sequence of objects $L_m$ in an abelian category $\altA$ is called ample if for every $A\in \altA$ there is an integer $m_0(A)$ such that for all $m\le m_0$ one has:
\begin{itemize}
 \item[(i)] There is an epimorphism $L_m^{\oplus r_m} \sur A$ for some $r_m\in \mathbb{N}$.
 \item[(ii)] $\Hom_\altA(L_m,A[j])=0$ for all $j\neq 0$, and 
 \item[(iii)] $\Hom(A,L_m)=0$.
\end{itemize}
Unfortunately, our sequence $L_m=\ho_X(mH)$ is not ample in $\Coh_{(1)}(X)$ as for any torsion free sheaf $F$ we have $\Ext^1_{(1)}(\ho_X(mH),F)\neq 0$, but it satisfies properties (i) and (iii). Moreover, it is an ample sequence in $\Coh(X)$. Using this, we try to copy the proof of Bondal and Orlov as far as possible following its presentation in Huybrechts textbook \cite{HuybFourMuk} (see Proposition 4.23). The 3rd step therein allows us to extend the isomorphism $\Psi|_\altH\cong \Id_\altH$ to the abelian category $\Coh_{(1)}(X)$. The only argument which does not apply literally is the proof that for a given morphism $A_1\rightarrow A_2$ one can find epimorphisms $L_m^{\oplus k} \sur A_1$ and $L_n^{\oplus l} \sur A_2$ and a morphism $L_m^{\oplus k} \rightarrow L_n^{\oplus l}$ such that the following diagram is commutative
\[ \xymatrix @C=1cm @R=1cm { L_m^{\oplus k} \ar@{->>}[r] \ar[d] & A_1 \ar[d] \\ L_n^{\oplus l} \ar@{->>}[r] & A_2.} \]
To show this we represent the morphism $A_1\rightarrow A_2$ by a roof. Doing this, we find a sheaf homomorphism $A'_1\rightarrow A_1$ in $\Coh(X)$ inducing an isomorphism in $\Coh_{(1)}(X)$ such that the composition $A'_1\rightarrow A_1\rightarrow A_2$ is a sheaf homomorphism in $\Coh(X)$. Take a surjection $L^{\oplus l}_n \sur A_2$ in $\Coh(X)$ and denote the kernel by $B$. As $\Ext^1(L_m^{\oplus k},B)=0$ in $\Der(X)$ for $m\ll 0$, the composition $L^{\oplus k}_m \sur A'_1\rightarrow A_2$ has a lift $L^{\oplus k}_m \rightarrow L^{\oplus l}_n$ for $m\ll 0$ and any surjective sheaf homomorphism $L^{\oplus k}_m \sur A'_1$ in $\Coh(X)$. \\
\textit{Step 5:} To extend the isomorphism $\Psi|_{\Coh_{(1)}(X)}\cong \Id_{\Coh_{(1)}(X)}$ to an isomorphism $\Psi \cong \Id$ on the bounded derived category $\DER^b_{(1)}(X)$, we mention that every complex in $\DER^b_{(1)}(X)$ is isomorphic to the direct sum of its cohomology sheaves. Using the additivity of $\Psi$ and $\Id$, we obtain the desired extension. 
\end{proof}

Theorem \ref{functordecomp} and the subsequent corollaries do not generalize to the case $c>1$. For example, if $X$ is a K3-surface, there are exact autoequivalences of $\Der(X)=\DER^b_{(2)}(X)$ without a decomposition of this form. But there is another possible generalization.\\
Note that two irreducible smooth projective curves $X$ and $Y$ are isomorphic if there are birational equivalent, i.e.\ if and only if there is an exact $k$-linear equivalence between their quotient categories $\DER^b_{(0)}(X)$ and $\DER^b_{(0)}(Y)$. 
%If we set $\DER^b_{(-1)}(X)=0$ for any variety $X$, a similar statement holds also for irreducible smooth projective varieties of dimension zero.  
In the case of surfaces Corollary \ref{isomorphic} is the dimension two generalization of this statement. This leads directly to the following questions generalizing these cases.\\

\textbf{Question.} \it Are two irreducible smooth projective varieties $X$ and $Y$ of dimension $d$ isomorphic if and only if their quotient categories $\DER^b_{(d-1)}(X)$ and $\DER^b_{(d-1)}(Y)$ are equivalent as $k$-linear triangulated categories? More general, are $X$ and $Y$ isomorphic in codimension $c<d$ if and only if $\Cohc(X)$ and $\Cohc(Y)$ are derived equivalent? \rm\\

We close this section by showing that there is no Serre functor on $\DER^b_{(1)}(X)$ for $\dim(X)\ge 2$, i.e. no $k$-linear exact autoequivalence $S$ and natural  homomorphisms
\[ \eta_{A,B}: \Hom_{(1)}(A,B) \longrightarrow \Hom_{(1)}(B,S(A))^\ast \]
inducing a non-degenerate pairing 
\begin{equation} \label{pairing} \Hom_{(1)}(A,B)\times \Hom_{(1)}(B,S(A)) \longrightarrow k.\end{equation}
Note, that we should not require that the pairing is perfect as our $\Hom$-groups might be infinite-dimensional. Let us assume the contrary. By Proposition \ref{autoequiv} there is an integer $n$ such that $S(\Coh_{(1)}(X))=\Coh_{(1)}(X)[n]$. Take a torsionfree sheaf $B$ and insert $A=B$ into equation (\ref{pairing}). Using Proposition \ref{HomExt}, we conclude $n=0$. Inserting $A=(\mbox{torsion sheaf})[-1]$ leads to a contradiction.

\newpage

\begin{appendix}
 
\section{Equivalences of quotient categories}

We will complete the proof of Proposition \ref{equivcat} by showing that $\Derc(X)$ is equivalent to $\Der(\Cohc(X))$. For the notation see Subsection \arabic{claim}.1.\\
The definition of $\Derc(X)$ and $Q$ shows that $Q:\Der(X)\longrightarrow \Derc(X)$ is the localization functor with respect to the set of morphisms $f:E\longrightarrow F$ in $\Der(X)$ with the property $C(f)\in \Dr{c+1}(X)$ which is equivalent to $\ker H^i(f), \coker H^i(f)$ $\in \Ch{c+1}(X)$ for all $i\in \mathbb{Z}$. Let us denote by $S^{c+1}$ the set of complex homomorphisms $g:E\longrightarrow F$ in $\Kom(X)$ with $\ker H^i(g), \coker H^i(g) \in \Ch{c+1}(X)$ for all $i\in \mathbb{Z}$. This set contains the set of quasi-isomorphisms. If we represent $f\in \Hom_{\Der(X)}(E,F)$ by the roof
\[ \xymatrix @C=0.5cm @R=0.5cm { & E' \ar[dl]_s \ar[dr]^g & \\ E & & F \;,} \]
we see that $C(f) \in \Dr{c+1}(X)$ if and only if $g\in S^{c+1}$. Using this and the definition of $\Der(X)$ as the localization of $\Kom(X)$ with respect to the set of quasi-isomorphisms, we obtain that $\Derc(X)$ is (isomorphic to) the localization of $\Kom(X)$ by $S^{c+1}$. In particular, a morphism in $\Derc(X)$ can be represented by a roof
\[ \xymatrix @C=0.5cm @R=0.5cm { & E' \ar[dl]_t \ar[dr]^h & \\ E & & F } \]
with $t$ and $h$ complex homomorphisms and $t\in S^{c+1}$.  
We will use this equivalent description of $\Derc(X)$ to verify the following proposition.
\begin{prop}[cf.\ Proposition \ref{equivcat}]
For any $0\le c \le d$ the naturally induced exact functor $\Der(P):\Der(X)=\Der(\Coh(X)) \longrightarrow \Der(\Cohc(X))$ factorizes over $Q:\Der(X) \longrightarrow \Derc(X)$ and the resulting functor $T:\Derc(X) \longrightarrow \Der(\Cohc(X))$ is an equivalence of triangulated categories.
\end{prop}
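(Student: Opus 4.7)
The plan is to verify that $T \colon \Derc(X) \to \Der(\Cohc(X))$ is essentially surjective and fully faithful. First I would use the identification given at the start of the appendix: $\Derc(X)$ is the localization of $\Kom(X)$ at the multiplicative system $S^{c+1}$ of chain maps $t$ for which each $\ker H^i(t)$ and $\coker H^i(t)$ lies in $\Ch{c+1}(X)$. Since $P$ is exact and annihilates $\Ch{c+1}(X)$, applying $P$ term-wise sends such a $t$ to a quasi-isomorphism in $\Kom(\Cohc(X))$, confirming that $T$ is well-defined on roofs and that $\Der(P) = T \circ Q$ by the universal property of $Q$.

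For essential surjectivity I would argue by induction on the length of a bounded complex $G^\bullet \in \Kom^b(\Cohc(X))$ that it is isomorphic in $\Der(\Cohc(X))$ to $\Der(P)(E^\bullet)$ for some $E^\bullet \in \Kom^b(X)$. The single-object case is trivial because the objects of $\Cohc(X)$ are by definition the objects of $\Coh(X)$. For the inductive step, the differentials of $G^\bullet$ are morphisms in $\Cohc(X)$ and, by Corollary \ref{Ext}, each is represented by an honest sheaf morphism on some open $U \subset X$ with $\codim(X \setminus U) > c$; since only finitely many differentials appear, a common such $U$ can be chosen, giving a genuine complex of coherent sheaves on $U$. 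Extension back to $X$ via subsheaves of the pushforward (as in the proof of Proposition \ref{equivcat}) produces the required $E^\bullet$.

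For full faithfulness I would use the calculus-of-fractions descriptions of both Hom groups: a morphism in $\Derc(X)$ from $E$ to $F$ is an equivalence class of roofs $E \xleftarrow{t} E' \xrightarrow{h} F$ with $t \in S^{c+1}$, while a morphism in $\Der(\Cohc(X))$ from $P(E)$ to $P(F)$ is an equivalence class of roofs $P(E) \xleftarrow{q} G \xrightarrow{g} P(F)$ with $q$ a quasi-isomorphism in $\Kom^b(\Cohc(X))$. For fullness, given $(q,g)$, I would first invoke essential surjectivity to replace $G$ by $\Der(P)(E')$ for some $E' \in \Kom^b(X)$. The remaining datum is a pair of maps $P(E') \to P(E)$ (a quasi-iso) and $P(E') \to P(F)$ in $\Kom^b(\Cohc(X))$; applying Corollary \ref{Ext} term-by-term, and using that only finitely many levels are involved, these maps are represented by honest roofs of coherent sheaf maps over a single open subset with codimension-$(>c)$ complement, which extend back to $X$ to yield a lifted roof whose left leg lies in $S^{c+1}$. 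Faithfulness proceeds in parallel: an equivalence of images under $T$ is witnessed by a finite diagram in $\Kom^b(\Cohc(X))$; all of its finitely many arrows may be represented over a common open, and extending to $X$ provides the desired equivalence of roofs in $\Derc(X)$.

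The main obstacle is the interplay between two layers of fraction-calculus, namely the roof description of morphisms in $\Cohc(X)$ itself and the standard roof description of morphisms in $\Der(\Cohc(X))$. The essential technical point is that, for any \emph{finite} diagram of such double-roofs, a \emph{single} open $U$ of codimension-$(>c)$ complement can be chosen over which every arrow becomes an honest morphism of coherent sheaves, so that the whole diagram can be lifted to $\Kom^b(\Coh(X))$ simultaneously. Boundedness of the complexes, together with Corollary \ref{Ext} and the standard extension of coherent subsheaves across codimension-$(>c)$ defects, furnish exactly this simultaneous lift, and are the crux of the argument.
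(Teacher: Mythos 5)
Your proof is essentially correct, but it implements the argument differently from the paper's Appendix A. You share the same skeleton: identify $\Derc(X)$ with the localization of $\Kom(X)$ at $S^{c+1}$, then check essential surjectivity, fullness and faithfulness by lifting finite diagrams from $\Kom(\Cohc(X))$ back to $\Kom(\Coh(X))$. The difference is in the lifting device. You pass to a common open subset $U$ with $\codim(X\setminus U)>c$ (via Corollary \ref{Ext}), turn everything into honest complexes and chain maps of coherent sheaves on $U$, and then extend back to $X$ by choosing coherent subcomplexes of the pushforward; in effect you are routing the proof through the equivalence $\Derc(X)\cong\varinjlim_U\Der(U)$. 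The paper never leaves $X$ in the appendix: its key device is the maximal subcomplex $\torssh(E)$ supported in codimension $>c$, and the normalization $\torssh(E)=\torssh(F)=0$, which forces the left legs of roofs to be genuine injections of subsheaves; roof apexes are then replaced by subcomplexes such as $E''_k=E'_k\cap d_k^{-1}(E'_{k+1})$. Your route is arguably more conceptual (it makes the role of Corollary \ref{Ext} transparent and reuses the extension-of-coherent-subsheaves trick already deployed in Section 3), while the paper's route is more self-contained and gives slightly stronger output (isomorphisms of complexes, not just quasi-isomorphisms, in the essential surjectivity step --- which you also need, and do get, when you reuse that step inside the fullness argument).

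One spot where your sketch is thinner than it should be is the phrase ``extend back to $X$.'' Extending each level $E'_k|_U$ (or each graph $\Gamma_{g_k}\subset E'_k|_U\oplus F_k|_U$) separately to a coherent subsheaf on $X$ does not automatically produce a subcomplex, nor a chain map on $X$: the differentials and the squares only commute, and $d^2$ only vanishes, after restriction to $U$, so the extended data have defects supported in $X\setminus U$. You must either shrink $U$ to kill the finitely many such defects and then re-extend compatibly by descending induction (replacing $E''_k$ by $E''_k\cap d^{-1}(E''_{k+1})$, exactly the paper's intersection trick), or observe that the defect subsheaves lie in $\Ch{c+1}(X)$ and quotient them out. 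You correctly identify this simultaneous lift as the crux, so this is a matter of supplying the detail rather than a wrong turn; with it filled in, the argument goes through.
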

\[ \xymatrix @C=0.2cm @R=1.3cm { & {\Der(X)=\Der(\Coh(X))} \ar[dr]^{\Der(P)} \ar[dl]_Q & \\ {\Derc(X)} \ar[rr]^\sim_T & & {\Der(\Cohc(X))} } \]
\begin{proof} We prove the proposition in several steps. The existence of $T$ was already verified in Subsection \arabic{claim}.1. We still need to show that $T$ is fully faithful and that every object of $\Der(\Cohc(X))$ is isomorphic to some object in the image of $T$. We formulate these assertions  as three lemmas following the definition.
\end{proof}
\begin{defin}
 For every coherent sheaf $F$ let $\torssh(F)$ be the biggest subsheaf of $F$ whose support has codimension greater than $c$. For every complex $E$ of coherent sheaves we define $\torssh(E)$ to be the  subcomplex of $E$ with components $\torssh(E_n)$, where the $E_n$ are the components of $E$. It is the biggest subcomplex of $E$ which is a complex in $\Ch{c+1}(X)$.
\end{defin}
\begin{lemma}
 For every bounded complex $E$ in $\Cohc(X)$ there is a bounded complex $E'$ in $\Coh(X)$ with $\torssh(E')=0$ and an isomorphism $s:T(E') \longrightarrow E$  of complexes, where $T(E')$ is the complex $E'$ regarded as a complex in $\Cohc(X)$. In particular, every object of $\Der(\Cohc(X))$ is isomorphic to some object in the image of $T$. 
\end{lemma}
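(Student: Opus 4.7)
The plan is to reduce the problem to a sufficiently small open $U\subset X$ with $\codim(X\setminus U)>c$ on which the data of $E$ becomes a genuine complex of coherent sheaves, then to extend this back to $X$ via pushforward, and finally to quotient out the torsion. More precisely, each differential $d_n\colon E_n\to E_{n+1}$ in $\Cohc(X)$ is represented by a roof $E_n\xleftarrow{s_n}F_n\xrightarrow{g_n}E_{n+1}$ with $\ker(s_n),\coker(s_n)\in\Ch{c+1}(X)$, so on the open subset $W_n=X\setminus(\supp\ker(s_n)\cup\supp\coker(s_n))$ the map $s_n$ is an isomorphism of sheaves and $d_n|_{W_n}$ lifts to the honest sheaf homomorphism $\tilde d_n\mathrel{\mathop{:}}=g_n|_{W_n}\circ(s_n|_{W_n})^{-1}$. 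Intersecting the finitely many $W_n$ (using boundedness of $E$) and shrinking further via Corollary \ref{Ext} so that the honest compositions $\tilde d_{n+1}\circ\tilde d_n$ vanish on the nose, we obtain an open $U\subset X$ with $\codim(X\setminus U)>c$ together with a genuine bounded complex $(E_n|_U,\tilde d_n)$ of coherent sheaves on $U$.

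Next we extend this complex to $X$. With $j\colon U\hookrightarrow X$ the inclusion, $j_\ast(E|_U)$ is a bounded complex of quasi-coherent sheaves on $X$, and we build a coherent subcomplex $E'\subset j_\ast(E|_U)$ agreeing with $E|_U$ on $U$ inductively in the cohomological degree: pick a coherent subsheaf $G_n\subset j_\ast(E_n|_U)$ extending $E_n|_U$ (Hartshorne, II, Exercise 5.15) and set $E'_n\mathrel{\mathop{:}}=G_n+j_\ast(\tilde d_{n-1})(E'_{n-1})$. This is coherent, restricts to $E_n|_U$, is mapped into $E'_{n+1}$ by the restriction $d'_n$ of $j_\ast(\tilde d_n)$ by construction, and satisfies $d'_{n+1}\circ d'_n=0$ since $j_\ast(\tilde d_{n+1}\circ\tilde d_n)=0$. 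To enforce $\torssh(E')=0$ we replace $E'$ by the quotient complex $E'/\torssh(E')$; this is well-defined as a complex because the image of $\torssh(E'_n)$ under any sheaf map is supported in codimension $>c$ and therefore lies inside $\torssh(E'_{n+1})$, and the quotient visibly has vanishing torsion.

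Finally, to produce the complex isomorphism $s\colon T(E')\to E$ in $\Cohc(X)$ we construct each component $s_n$ via the fibre product $H_n\mathrel{\mathop{:}}=E'_n\times_{j_\ast(E_n|_U)}E_n$ along the canonical maps from $E'_n$ and $E_n$ into $j_\ast(E_n|_U)$; on $U$ both structure maps of $H_n$ are identities, so globally they have kernel and cokernel in $\Ch{c+1}(X)$, and the resulting roof $E'_n\leftarrow H_n\to E_n$ defines the desired isomorphism in $\Cohc(X)$. Commutativity with the differentials follows by a restriction check on $U$, where both compositions equal $\tilde d_n$, combined with the statement of Corollary \ref{Ext} that a morphism in $\Cohc(X)$ is determined by its restriction to any open of codimension-$c$ complement. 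The main obstacle is the extension step: the subcomplex $E'\subset j_\ast(E|_U)$ must be produced coherent, bounded and compatible with all differentials simultaneously, and the boundedness of $E$ is precisely what makes the inductive enlargement terminate, after which the torsion quotient and the isomorphism verification are routine.
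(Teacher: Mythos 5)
Your proof is correct, but it takes a genuinely different route from the paper's. The paper never leaves $X$: it straightens the differentials one at a time, starting from the top degree, by replacing each component $E_n$ with the apex $E'_n$ of a roof representing $d_n$, so that after finitely many replacements every differential is an honest sheaf homomorphism (with consecutive compositions landing only in $\Ch{c+1}(X)$); each replacement is itself an isomorphism of complexes in $\Cohc(X)$, and the final step is the same torsion quotient as yours. You instead pass to a single open $U$ with $\codim(X\setminus U)>c$ on which the whole datum becomes a genuine bounded complex of coherent sheaves (using Corollary \ref{Ext} both to realize the differentials as sheaf maps and to kill the compositions), extend back to $X$ inside $j_\ast(E|_U)$ by the inductive coherent-subcomplex construction, and then quotient by torsion. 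Your route leans on the geometric picture $\Derc(X)\cong\varinjlim_U\Der(U)$ and on extension of coherent subsheaves, which the paper uses elsewhere (in the proof that $\varepsilon_X$ is an equivalence) but avoids here; the paper's roof-by-roof replacement is more elementary in that it needs neither $j_\ast$ nor the coherence of your fibre product $H_n$ (which deserves a word: $H_n$ is the kernel of a map from the coherent sheaf $E'_n\oplus E_n$ onto its image in the quasi-coherent sheaf $j_\ast(E_n|_U)$, and that image is coherent on a noetherian scheme, so $H_n$ is coherent). One small wrinkle: after you replace $E'$ by $E'/\torssh(E')$ there is no longer a canonical map into $j_\ast(E_n|_U)$, so the fibre-product isomorphism should be built for the pre-quotient complex and then composed with the componentwise quotient maps, which are epimorphisms with kernels in $\Ch{c+1}(X)$ and hence isomorphisms of complexes in $\Cohc(X)$; this is trivial to arrange but should be said.
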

\setcounter{lemma1}{\value{prop}}
\begin{proof}
 Let us write the complex $E$ as $E_1 \xrightarrow{d_1} \ldots \longrightarrow E_{n-1} \xrightarrow{d_{n-1}} E_n \xrightarrow{d_n} E_{n+1} $. We represent $d_{n}$ by a roof in $\Coh(X)$
\[ \roof{E_n}{E'_n}{E_{n+1}}{s_n}{\tilde{d}_n} \]
with $\ker(s_n),\coker(s) \in \Ch{c+1}(X)$ and obtain an isomorphism of complexes in $\Cohc(X)$
\[ \xymatrix @C=1cm @R=1cm { E^{(1)} \ar[d]^{s^{(1)}}&:  & E_1 \ar@{=}[d] \ar[r]^{d_1} & \quad\ldots\quad \ar[r]^{d_{n-2}} & E_{n-1} \ar@{=}[d] \ar[r]^{s_n^{-1}d_{n-1}} & E'_n \ar[d]^{s_n} \ar[r]^{\tilde{d}_n} & E_{n+1} \ar@{=}[d] \\ E  &:  & E_1 \ar[r]^{d_1} & \quad\ldots\quad \ar[r]^{d_{n-2}} & E_{n-1} \ar[r]^{d_{n-1}} & E_n \ar[r]^{d_n} & E_{n+1}\;. } \]
The advantage of $E^{(1)}$ is that $\tilde{d}_n$ is a morphism in $\Coh(X)$ instead of $\Cohc(X)$. We repeat this procedure with the differential $s_n^{-1}d_{n-1}$ and obtain an isomorphism $s^{(2)}:E^{(2)} \longrightarrow E^{(1)}$ and the last two differentials of $E^{(2)}$ are morphisms of sheaves. Progressing in this way, we get an isomorphism $s^{(1)}\circ \ldots \circ s^{(n)}: E^{(n)} \longrightarrow E$ of complexes in $\Cohc(X)$ and all differentials of $E^{(n)}$ are morphisms of sheaves. However, $E^{(n)}$ does not need to be a complex in $\Coh(X)$. It is a complex in $\Cohc(X)$, i.e.\  the image of the composition of two successive differentials is contained in $\Ch{c+1}(X)$. We obtain another isomorphism of complexes
\[ \xymatrix @C=0.7cm @R=1cm { E^{(n)} \ar[d]^{s'}&:  & E^{(n)}_1 \ar@{->>}[d] \ar[r]^{d^{(n)}_1} & \quad\ldots\quad \ar[r]^{d^{(n)}_{n}} & E^{(n)}_{n+1} \ar@{->>}[d]  \\ E'=E^{(n)}/\torssh(E^{(n)})  &:  & E^{(n)}_1/\torssh(E^{(n)}_1) \ar[r]^{d'_1} & \quad\ldots\quad \ar[r]^{d'_{n}} & E^{(n)}_{n+1}/\torssh(E^{(n)}_{n+1})  } \]
in $\Cohc(X)$. The composition of two successive differentials in the complex below is zero by the construction of $E'$. Thus, $E'$ can be regarded as a complex in $\Coh(X)$ with $\torssh(E')=0$  which is mapped by $T$ onto itself as an object of $\Kom(\Cohc(X))$. The requested isomorphism is 
$ s^{(1)}\circ \ldots \circ s^{(n)}\circ s'^{-1}: T(E') \longrightarrow E$.
\end{proof}
\textbf{Remark.} Note that the isomorphism in the lemma is an isomorphism of complexes in $\Cohc(X)$ and not just a quasi-isomorphism which would be enough for the second statement of the lemma.  Furthermore, for any complex $E$ in $\Coh(X)$ the complex homomorphism $E \sur E/\torssh(E)$ is contained in $S^{c+1}$ and, thus, an isomorphism in $\Derc(X)$. In other words, we can replace any object in $\Derc(X)$ by an isomorphic complex without nontrivial subcomplexes in $\Ch{c+1}(X)$.
\begin{lemma}
 The functor $T$ is full, i.e.\  for two arbitrary objects $E,F\in \Derc(X)$ the map
\[ \Hom_{\Derc(X)}(E,F) \xrightarrow{\;\;T\;\;} \Hom_{\Der(\Cohc(X))}(T(E),T(F)) \]
is surjective.
\end{lemma}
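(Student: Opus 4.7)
The strategy is to lift a morphism $f: T(E) \to T(F)$ in $\Der(\Cohc(X))$ to a roof of complex homomorphisms in $\Kom(\Coh(X))$; recall from the discussion preceding the proposition that $\Derc(X)$ is the localisation of $\Kom(\Coh(X))$ at the class $S^{c+1}$, so once such a roof is constructed, its class in $\Derc(X)$ provides a preimage. Start by representing $f$ by a roof $T(E) \xleftarrow{q} G \xrightarrow{h} T(F)$ in $\Kom(\Cohc(X))$ with $q$ a quasi-isomorphism, and apply the preceding lemma to replace $G$ by $T(G')$ for some bounded $G' \in \Kom(\Coh(X))$. What remains is to lift the complex homomorphisms $q$ and $h$ back to $\Kom(\Coh(X))$ up to a common refinement of $G'$.

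This reduces to the following sublemma: for bounded complexes $A, B \in \Kom(\Coh(X))$ and any complex homomorphism $\phi: T(A) \to T(B)$ in $\Kom(\Cohc(X))$, there exist $\tilde{A} \in \Kom(\Coh(X))$, a complex homomorphism $s: \tilde{A} \to A$ all of whose components have kernel and cokernel in $\Ch{c+1}(X)$, and $\psi: \tilde{A} \to B$ in $\Kom(\Coh(X))$ satisfying $T(\psi) = \phi \circ T(s)$. Applied first to $q$ (producing $s_1: \tilde{G}' \to G'$ and $\tilde{q}: \tilde{G}' \to E$) and then to $h \circ T(s_1): T(\tilde{G}') \to T(F)$ (producing $s_2: \tilde{\tilde{G}}' \to \tilde{G}'$ and $\tilde{h}: \tilde{\tilde{G}}' \to F$), the sublemma yields a roof $E \xleftarrow{\tilde{q} \circ s_2} \tilde{\tilde{G}}' \xrightarrow{\tilde{h}} F$ in $\Kom(\Coh(X))$ whose left leg lies in $S^{c+1}$ and hence defines a morphism in $\Derc(X)$. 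Via the common refinement $T(s_1) \circ T(s_2): T(\tilde{\tilde{G}}') \to T(G')$ this roof is equivalent in $\Der(\Cohc(X))$ to the original, so its $T$-image equals $f$.

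The sublemma itself is proved by descending induction on degree. At the top degree, represent $\phi_N$ by a roof in $\Coh(X)$ directly. At a lower degree $n$, assuming the data at higher degrees have been constructed, form $\tilde{A}_n$ as an iterated pullback in $\Coh(X)$: first $A_n \times_{A_{n+1}} \tilde{A}_{n+1}$ to build a lift of $d_A$ compatible with $s_{n+1}$, then a further pullback with a roof representation of $\phi_n$ to attach the maps $s_n$ and $\psi_n$. By construction $s_{n+1} \circ \tilde{d}_n = d_A \circ s_n$ holds in $\Coh(X)$, but the relations $d_B \circ \psi_n = \psi_{n+1} \circ \tilde{d}_n$ and $\tilde{d}_{n+1} \circ \tilde{d}_n = 0$ initially hold only after applying $T$. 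Since two morphisms in $\Coh(X)$ that agree in $\Cohc(X)$ differ by a map with image in $\Ch{c+1}(X)$, one corrects the defect by further restricting $\tilde{A}_n$ to the intersection of the kernels of these two discrepancy morphisms; the resulting subsheaf has quotient in $\Ch{c+1}(X)$, so the restricted $s_n$ still has kernel and cokernel in $\Ch{c+1}(X)$. The main obstacle is precisely this bookkeeping: one must simultaneously enforce commutativity of the differentials with both $s$ and $\psi$ as well as the cocycle identity $\tilde{d}_{n+1}\circ\tilde{d}_n=0$ in $\Coh(X)$ rather than merely in $\Cohc(X)$, while preserving at every step the property that each component of $s$ lies in the class used to localise.
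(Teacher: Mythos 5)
Your proposal is correct, and its outer scaffolding---represent $f$ by a roof, use the preceding lemma to replace the apex by $T(G')$, and lift each leg separately---matches the paper. The core lifting mechanism, however, is genuinely different. The paper first normalizes so that $E$, $F$, $G'$ and the apexes $E'_k$ of the componentwise roofs have no nonzero subsheaves in $\Ch{c+1}(X)$, i.e.\ $\torssh=0$. This buys two things at once: each leg $s_k$ becomes injective, so the apexes are honest subsheaves of $E_k$ and the common refinement is the single intersection $E''_k=E'_k\cap d_k^{-1}(E'_{k+1})$; and the chain-map identity $h_kf''_k=f''_{k+1}d_k$ holds in $\Coh(X)$ for free, because the discrepancy has image in $\Ch{c+1}(X)$ and lands in a sheaf $F_{k+1}$ with $\torssh(F_{k+1})=0$, hence vanishes. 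Your degreewise pullback-and-correct induction dispenses with the normalization but pays for it with exactly the bookkeeping you describe: shrinking the apex at each degree to kill the discrepancies $d_B\circ\psi_n-\psi_{n+1}\circ\tilde d_n$ and $\tilde d_{n+1}\circ\tilde d_n$, while checking---as you do---that this neither disturbs the relations already secured in higher degrees nor destroys the condition that $\ker(s_n)$ and $\coker(s_n)$ lie in $\Ch{c+1}(X)$ (a Serre subcategory, so the shrinking is harmless). Both routes are sound; the paper's is shorter, yours is more mechanical and would be the one to reach for when a maximal subsheaf $\torssh(-)$ is unavailable. One point you leave implicit and should spell out: the left leg $\tilde q\circ s_2$ lies in $S^{c+1}$ because $T(\tilde q\circ s_2)=q\circ T(s_1s_2)$ is a quasi-isomorphism and the quotient functor is exact with kernel $\Ch{c+1}(X)$, so the kernels and cokernels of $H^i(\tilde q\circ s_2)$ lie in $\Ch{c+1}(X)$; this is the same observation the paper makes in its final paragraph via the cone of $f''$.
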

\begin{proof} Due to the upper remark, we can assume $\torssh(E)=\torssh(F)=0$. Let us represent a morphism $f:T(E) \longrightarrow T(F)$ by a roof
 \[ \roof{T(E)}{G}{T(F)}{s}{\tilde{f}} \]
with complex homomorphisms $\tilde{f}$ and $s$ in $\Cohc(X)$ and $s$ is a quasi-isomorphism. Due to the previous lemma, we have an isomorphism $s':T(G')\longrightarrow G$ of complexes and we can replace the roof by the equivalent roof
\[ \roof{T(E)}{T(G')}{T(F)}{ss'}{\tilde{f}s'}\qquad\mbox{ with } \torssh(E)=\torssh(F)=\torssh(G')=0. \]
Thus, it is enough to show that we can `lift' every morphism represented by a complex homomorphism $f:T(E) \longrightarrow T(F)$ in $\Cohc(X)$ to a morphism $\hat{f}:E\longrightarrow F$ in $\Derc(X)$ under the assumption $\torssh(E)=\torssh(F)=0$. Furthermore, $\hat{f}$ needs to be an isomorphism if $f$ is a quasi-isomorphism. \\
If we represent every component of $f$ by a roof in $\Coh(X)$, we get the following diagram
\[ \xymatrix @C=1.5cm @R=1cm { E_1 \ar[r]^{d_1} & \quad\ldots\quad \ar[r]^{d_{n-1}} & E_n \ar[r]^{d_n} & E_{n+1} \\
E'_1 \ar[u]_{s_1} \ar[d]^{f_1} & \ldots & E'_n \ar[u]_{s_n} \ar[d]^{f_n} & E'_{n+1} \ar[u]_{s_{n+1}} \ar[d]^{f_{n+1}} \\ F_1 \ar[r]^{h_1} & \quad\ldots\quad \ar[r]^{h_{n-1}} & F_n \ar[r]^{h_n} & F_{n+1} } \]
with $\ker(s_1), \ldots, \ker(s_{n+1}), \coker(s_1), \ldots, \coker(s_{n+1}) \in \Ch{c+1}(X)$. Since \\
$\torssh(E_k)=\torssh(F_k)=0$, the morphisms $s_k$ and $f_k$ factorize over the quotient map $E'_k \sur E'_k/\torssh(E'_k)$. The commutativity of the upper diagram in $\Cohc(X)$ is still valid if we replace $E'_k$ by the quotient $E'_k/\torssh(E'_k)$. Thus, we can assume $\torssh(E'_k)=0$. In this case, $s_k$ is injective and we can regard $E'_k$ as a subsheaf of  $E_k$ with the inclusions $s_k$. We consider now the subsheaves $E''_k\mathrel{\mathop{:}}=E'_k\cap d_k^{-1}(E'_{k+1})$ of $E_k$. Since $d_{k+1}\circ d_k=0$, we get $d_k(E''_k)\subseteq E''_{k+1}$ and obtain a subcomplex $E''$ of $E$ with components $E''_k$. Using $E_k/E'_k, E_{k+1}/E'_{k+1} \in \Ch{c+1}(X)$, it is an easy exercise to show that $E_k/E''_k$ is contained in $\Ch{c+1}(X)$. Thus, the inclusion $s'':E''\longrightarrow E$ is in $S^{c+1}$ and, therefore, an isomorphism in $\Derc(X)$. The restrictions $f''_k$ of the $f_k$ to $E''_k$ form a complex homomorphism in $\Cohc(X)$, i.e.\  the image of $h_kf''_k-f''_{k+1}d_k$ is contained in $\Ch{c+1}(X)$. Due to our assumption on $F$, there are no nontrivial subsheaves of $F_{k+1}$ of this kind. This shows that the $f''_k$ form a complex homomorphism $f'':E''\longrightarrow F$ in $\Coh(X)$. The composition $\hat{f}\mathrel{\mathop{:}}=f''\circ s''^{-1}:E \longrightarrow F$ which is well defined in $\Derc(X)$ is the desired lift of $f:T(E) \longrightarrow T(F)$. \\
Finally, we have to show that for a quasi-isomorphism $f$ the lift $\hat{f}$ is an isomorphism. If $f$ is a quasi-isomorphism in $\Kom(\Cohc(X))$, its cone $C(f)$ must be zero in $\Der(\Cohc(X))$. On the other hand, we have $\Der(P)(C(f''))\cong T(C(\hat{f}))\cong C(f)=0$. Since $\Der(P)$ commutes with cohomology, the cohomology sheaves of $C(f'')$ are contained in the kernel of $P$ which is $\Ch{c+1}(X)$. Therefore, $f''\in S^{c+1}$ and $\hat{f}=f''\circ s''^{-1}$ is an isomorphism in $\Derc(X)$.  
\end{proof}
\begin{lemma}
 The functor $T$ is faithful, i.e.\  for two arbitrary objects $E,F\in \Derc(X)$ the map
\[ \Hom_{\Derc(X)}(E,F) \xrightarrow{\;\;T\;\;} \Hom_{\Der(\Cohc(X))}(T(E),T(F)) \]
is injective.
\end{lemma}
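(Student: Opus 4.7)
The plan is to exploit two observations: that $T$ is conservative, and that zero morphisms in a triangulated category are detected by splittings of the associated distinguished triangle.

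\textbf{Conservativity.} First I check that for any $E\in \Derc(X)$, $T(E)=0$ in $\Der(\Cohc(X))$ if and only if $E\cong 0$ in $\Derc(X)$. Since $P$ is exact with kernel $\Ch{c+1}(X)$, one has $H^i(T(E))=P(H^i(E))$ for every $i$; hence $T(E)=0$ is equivalent to $H^i(E)\in\Ch{c+1}(X)$ for all $i$, i.e.\ to $E\in\Dr{c+1}(X)$, which is exactly the condition that $E\cong 0$ in the Verdier quotient $\Derc(X)$. Applying this criterion to the mapping cone $C(\phi)$ of a morphism $\phi$ in $\Derc(X)$ yields at once: $T(\phi)$ is an isomorphism if and only if $\phi$ is.

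\textbf{Main step.} Let $\alpha:E\longrightarrow F$ in $\Derc(X)$ satisfy $T(\alpha)=0$, and complete it to a distinguished triangle $E\xrightarrow{\alpha}F\xrightarrow{\beta}C\longrightarrow E[1]$ in $\Derc(X)$. Applying $T$ produces a distinguished triangle in $\Der(\Cohc(X))$ whose first map vanishes; such a triangle splits, so there is a retraction $r:T(C)\longrightarrow T(F)$ with $r\circ T(\beta)=\mathrm{id}_{T(F)}$. By the preceding fullness lemma I lift $r$ to a morphism $\tilde r:C\longrightarrow F$ in $\Derc(X)$. The composite $\tilde r\circ\beta:F\longrightarrow F$ satisfies $T(\tilde r\circ\beta)=r\circ T(\beta)=\mathrm{id}_{T(F)}$, hence by conservativity $\tilde r\circ\beta$ is an isomorphism of $\Derc(X)$. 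Setting $r'\mathrel{\mathop{:}}=(\tilde r\circ\beta)^{-1}\circ\tilde r:C\longrightarrow F$ gives an honest retraction $r'\circ\beta=\mathrm{id}_F$, and combined with $\beta\circ\alpha=0$ (consecutive morphisms in a distinguished triangle compose to zero) this yields $\alpha=r'\circ\beta\circ\alpha=0$, as required.

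\textbf{Main obstacle.} There is no substantial obstacle beyond cleanly isolating the conservativity of $T$: the splitting of a triangle with vanishing first morphism, the lifting of the retraction via the already-established fullness, and the passage to an honest retraction of $\beta$ are all formal. The entire argument rests on the transparent chain of equivalences $T(E)=0 \iff H^i(E)\in\Ch{c+1}(X)\text{ for all }i \iff E\in\Dr{c+1}(X) \iff E\cong 0\text{ in }\Derc(X)$.
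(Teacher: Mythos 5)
Your proof is correct, and it takes a genuinely different route from the paper's. The paper argues at the level of complexes: it represents a morphism $\tilde f$ with $T(\tilde f)=0$ by a roof of complex homomorphisms, reduces to the case where $E$, $E'$ and $F$ have no subcomplexes in $\Ch{c+1}(X)$, extracts from the vanishing in $\Der(\Cohc(X))$ a quasi-isomorphism $u$ with $f\circ u=0$ as a complex homomorphism, lifts it through the construction used in the fullness lemma, and finally uses $\torssh(F)=0$ to upgrade the vanishing to an honest vanishing in $\Kom(\Coh(X))$. You instead prove the purely formal statement that a full exact functor between triangulated categories whose kernel consists only of zero objects is automatically faithful: conservativity on objects follows from $H^i(T(E))=P(H^i(E))$ together with $\ker(P)=\Ch{c+1}(X)$ and $\ker(Q)=\Dr{c+1}(X)$; the splitting of a triangle with vanishing first map, the lift of the retraction via the already-proved fullness, and the upgrade $T(\tilde r\circ\beta)=\mathrm{id}\Rightarrow\tilde r\circ\beta$ invertible (via the cone and conservativity) are all standard. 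Each step checks out, including the use of $\beta\circ\alpha=0$ at the end. Your approach buys brevity and generality --- it isolates exactly which abstract properties of $T$ (exactness, fullness, trivial kernel) force faithfulness, and would apply verbatim to any quotient situation of this type --- while the paper's argument stays self-contained at the level of roofs and subcomplexes and avoids invoking the triangulated splitting formalism. The only dependency to flag is that, like the paper, you rely essentially on the fullness lemma proved immediately beforehand, so the logical order is preserved.
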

\begin{proof}
Let $\tilde{f}:E\longrightarrow F$ be a morphism with $T(\tilde{f})=0$. We represent $\tilde{f}$ by a roof
\[ \roof{E}{E'}{F}{s}{f} \]
of complex homomorphisms in $\Coh(X)$ with $s\in S^{c+1}$. Replacing $E$ and $F$ if necessary, we can assume $\torssh(E)=\torssh(F)=0$. In this case $f$ and $s$ factorize over the quotient map $E'\sur E'/\torssh(E')$. Thus, we can assume $\torssh(E')=0$ as well. It is enough to find a complex homomorphism $t'':G'' \longrightarrow E'$ in $S^{c+1}$ with $f\circ t''=0$. \\
Since $T(s)$ is an isomorphism, we have $T(f)=0$ in $\Der(\Cohc(X))$. The latter is equivalent to the existence of a quasi-isomorphism $u:G \longrightarrow T(E')$ in $\Kom(\Cohc(X))$ with $f\circ u=0$ as a complex homomorphism. Lemma A.\arabic{lemma1} provides us with a complex isomorphism $u':T(G') \longrightarrow G$ with $\torssh(G')=0$. We denote the composition $u\circ u'$ by $t:T(G')\longrightarrow T(E')$. Due to the proof of the previous lemma, we can lift $t$ to an isomorphism $\hat{t}:G' \longrightarrow E'$ in $\Derc(X)$ represented by the roof
\[ \roof{G'}{G''}{E'}{s''}{t''} \]
with $s'',t''\in S^{c+1}$ and, moreover, $s''$ is an isomorphism regarded as a complex homomorphism in $\Cohc(X)$. Since $f\circ t'' \circ s''^{-1}=0$ in $\Kom(\Cohc(X))$, we get $f\circ t''=0$ in $\Kom(\Cohc(X))$. This means that the image of $f\circ t'':G''\longrightarrow F$ is a subcomplex of $F$ in $\Ch{c+1}(X)$. Due to our assumption on $F$, we get $f\circ t''=0$ in $\Kom(\Coh(X))$ and we are done.
\end{proof}
Note that the proof does not work in the case $\altM od(X)$ instead of $\Coh(X)$ since the sheaf $\torssh(E)$ does not exist in general, e.g.\  for the sheaf $E=\bigoplus_{x\in X} k(x)$.

\newpage
\section{Direct limits of spectral sequences}

Appendix B contains the proof of Lemma \ref{limitsequence} which states the following: 
\begin{lemma}[cf.\ Lemma \ref{limitsequence}] 
For every $F,G\in \QCoh(X)$ the spectral sequences 
\[\Ho^p(U,\altE xt^q(F|_U,G|_U)) \Longrightarrow \Ext^{p+q}(F|_U,G|_U)\] 
form an inductive system over the directed set of open subsets $U\subset X$ with $\codim(X\setminus U)>c$. There is a direct limit spectral sequence converging against 
\[ E^n = \varinjlim_{\codim(X\setminus U)>c} \Ext^n(F|_U,G|_U)\]
with $E_2$-term 
\[ E_2^{p,q} = \varinjlim_{\codim(X\setminus U)>c} \Ho^p(U,\altE xt^q(F|_U,G|_U)). \]
\end{lemma}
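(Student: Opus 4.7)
The plan is to build each spectral sequence from a single Cartan--Eilenberg resolution fixed on $X$ and then restricted to each $U$, and afterwards to exploit the exactness of filtered colimits in the category of abelian groups.

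First I would fix an injective resolution $G \to I^\bullet$ in $\QCoh(X)$ (cf.\ \cite{Hartshorne}, III, 6.1) together with a Cartan--Eilenberg resolution $\altH om(F, I^\bullet) \to J^{\bullet,\bullet}$ by injectives in $\QCoh(X)$. Since restriction $|_U$ is exact and sends injective quasi-coherent sheaves on $X$ to injectives on $U$, the bicomplex $J^{\bullet,\bullet}|_U$ is a Cartan--Eilenberg resolution of $\altH om(F|_U, I^\bullet|_U)$, and $I^\bullet|_U$ remains an injective resolution of $G|_U$. The Grothendieck spectral sequence of the double complex $\Gamma(U, J^{\bullet,\bullet}|_U)$ is then exactly
\[ E_2^{p,q}(U) = \Ho^p(U, \altE xt^q(F|_U, G|_U)) \Longrightarrow \Ext^{p+q}(F|_U, G|_U). \]

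Next, for an inclusion $V \subset U$ of open subsets of $X$, the canonical map of double complexes $\Gamma(U, J^{\bullet,\bullet}|_U) \to \Gamma(V, J^{\bullet,\bullet}|_V)$ induces a strict morphism of spectral sequences, strictly compatible with composition $W \subset V \subset U$. Hence the spectral sequences indexed by open subsets $U \subset X$ with $\codim(X \setminus U) > c$ form a genuine inductive system, not merely an inductive system up to homotopy.

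Finally I would pass to the direct limit. Since the indexing set is filtered and filtered colimits are exact in the category of abelian groups, they commute with cohomology at every page of the spectral sequence. Thus the limit spectral sequence is the one associated to the total complex of $\varinjlim_U \Gamma(U, J^{\bullet,\bullet}|_U)$; its $E_2$-term is $\varinjlim_U \Ho^p(U, \altE xt^q(F|_U, G|_U))$, and it converges to $\varinjlim_U \Ext^{p+q}(F|_U, G|_U)$. Convergence passes to the limit because for each fixed total degree $n$ the column filtration on $E^n(U)$ is bounded (lying in the first quadrant with $0 \le p \le n$) and exhaustive, and both properties are preserved under filtered colimits. The main obstacle is ensuring strict---not merely up-to-homotopy---functoriality of the construction in $U$, which is precisely why one must fix a single Cartan--Eilenberg resolution $J^{\bullet,\bullet}$ on $X$ and restrict, rather than choosing resolutions on each $U$ separately; the latter choice would yield the correct morphisms on each $E_r$-page but would make strict compatibility under composition $W \subset V \subset U$ awkward to establish.
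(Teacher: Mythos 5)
Your proposal is correct and follows essentially the same route as the paper's Appendix B: fix one injective resolution of $G$ and one Cartan--Eilenberg resolution on $X$, restrict to each $U$ (using that $|_U$ is exact and preserves injectives) to obtain a strictly functorial inductive system of double complexes, and then pass to the filtered colimit using exactness of filtered colimits and the first-quadrant boundedness to guarantee convergence. The only cosmetic difference is that the paper isolates the limit-of-spectral-sequences step as a separate lemma (Lemma B.2) with an explicit boundedness hypothesis on $r_0$, whereas you argue it inline.
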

For the notation we refer to Section \arabic{claim}. Before proving this let us recall the notion of a spectral sequence (cf.\ \cite{Gelfand-Manin} or \cite{McCleary}). A spectral sequence in an abelian category $\altA$ consists of a collection of objects $(E^{p,q}_r,E^n),\: n,p,q,r\in \mathbb{Z},\: r\ge 1$, and morphisms $d^{p,q}_r:E^{p,q}_r \longrightarrow E^{p+r,q-r+1}_r$ satisfying the following conditions:
\begin{itemize}
\item[(i)] $d^{p+r,q-r+1}_r\circ d^{p,q}_r=0$ for all $p,q,r$ and we denote the quotient \\
$\ker(d^{p,q}_r)/\im(d^{p-r,q+r-1}_r)$ by $H^{p,q}(E_r)$. 
\item[(ii)] There are isomorphisms $\alpha^{p,q}_r:H^{p,q}(E_r)\stackrel{\sim}{\rightarrow} E^{p,q}_{r+1}$ which are part of the data.
\item[(iii)] For any pair $(p,q)$ there is an $r_0$ such that $d^{p,q}_r=d^{p-r,q+r-1}_r=0$ for all $r\ge r_0$. In particular, $E^{p,q}_r \cong E^{p,q}_{r_0}=\mathrel{\mathop{:}}E^{p,q}_\infty$ for all $r\ge r_0$.
\item[(iv)] For every $n$ there is a descending filtration $\ldots \subset F^{p+1}E^n\subset F^pE^n \subset \ldots \subset E^n$ such that $\bigcap F^pE^n=0$ and $\bigcup F^pE^n=E^n$, and isomorphisms $\beta^{p,q}:E^{p,q}_\infty \stackrel{\sim}{\rightarrow} F^pE^{p+q}/F^{p+1}E^{p+q}$.
\end{itemize}
A morphism between spectral sequences $E$ and $E'$ is a collection of morphisms $f^{p,q}_r:E^{p,q}_r \rightarrow E'^{p,q}_r$ and $f^n:E^n\rightarrow E'^{n}$ compatible with all morphisms $d^{p,q}_r,\alpha^{p,q}_r,\beta^{p,q}$ and all filtrations. It is easy to see that the category of spectral sequences is additive but in general not abelian. Thus, direct and inverse limits do not exist in general. Neverless we have the following result.
\begin{lemma} \label{inductivesequence}
Let $(E_{(u)})_{u\in J}$ be inductive system of spectral sequences in the category of abelian groups, i.e. $J$ is a directed preordered set and for $v\le u$ there is a morphism $\rho^u_v:E_{(u)} \rightarrow E_{(v)}$ of spectral sequences such that $\rho^u_u=id$ and $\rho^v_w\circ \rho^u_v=\rho^u_w$ if $w\le v\le u$. If $r_0$ in the definition of the spectral sequence is bounded from above for all $u\in J$, there is a direct limit $\varinjlim_{u\in J}E_{(u)}$ of this system given by a spectral sequence with 
\[ (\varinjlim_{u\in J}E_{(u)})^{p,q}_r=\varinjlim_{u\in J} E^{p,q}_{(u)r} \quad\mbox{and}\quad (\varinjlim_{u\in J}E_{(u)})^n=\varinjlim_{u\in J} E_{(u)}^n\] 
\end{lemma}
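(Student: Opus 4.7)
The plan is to construct the limit spectral sequence termwise. I define
\[ (\varinjlim E_{(u)})^{p,q}_r := \varinjlim_{u\in J} E^{p,q}_{(u)r}, \qquad (\varinjlim E_{(u)})^n := \varinjlim_{u\in J} E^n_{(u)}, \]
and set the differentials, the isomorphisms $\alpha^{p,q}_r$, $\beta^{p,q}$, and the filtration $F^p(\varinjlim E_{(u)})^n := \varinjlim_u F^pE^n_{(u)}$ by applying $\varinjlim_u$ to the corresponding pieces of each spectral sequence $E_{(u)}$. That all of these assemble into a well-defined set of data, and that the obvious morphisms $E_{(v)} \to \varinjlim_u E_{(u)}$ are morphisms of spectral sequences, is immediate from the functoriality of $\varinjlim$ together with the hypothesis that each transition $\rho^u_v$ is itself a morphism of spectral sequences.

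The main tool is that filtered colimits in the category of abelian groups are exact, hence commute with kernels, images, and cokernels. This makes axioms (i) and (ii) routine: the composition $d_r\circ d_r$ vanishes because it is the colimit of zeros, and
\[ H^{p,q}(E_r) \;=\; \varinjlim_u \ker d^{p,q}_{(u)r}\big/\varinjlim_u \im d^{p-r,q+r-1}_{(u)r} \;\cong\; \varinjlim_u H^{p,q}(E_{(u)r}) \;\cong\; \varinjlim_u E^{p,q}_{(u)r+1} \;=\; E^{p,q}_{r+1}, \]
where the last isomorphism is $\varinjlim_u \alpha^{p,q}_{(u)r}$. Axiom (iii) follows directly from the hypothesis: for each fixed $(p,q)$, the uniform bound on $r_0$ in $u$ supplies a single integer $R(p,q)$ beyond which $d^{p,q}_{(u)r}$ and $d^{p-r,q+r-1}_{(u)r}$ already vanish for every $u\in J$, and therefore so do their colimits.

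The main obstacle is axiom (iv). Exactness of $\varinjlim_u$ yields $F^{p+1}E^n \subset F^pE^n \subset E^n$, the quotient identification $F^pE^n/F^{p+1}E^n \cong E^{p,n-p}_\infty$ (again by exchanging $\varinjlim_u$ with a cokernel), and exhaustion $\bigcup_p F^pE^n = E^n$ by interchanging the two filtered colimits $\varinjlim_p$ and $\varinjlim_u$. The delicate point is that $\bigcap_p F^pE^n = 0$ does not formally descend through a filtered colimit, since arbitrary intersections need not commute with $\varinjlim$. I would resolve this by using that in the application of Lemma \ref{limitsequence} the spectral sequences in question are first-quadrant Grothendieck spectral sequences computing $\Ext^{p+q}(F|_U,G|_U)$ on noetherian schemes of bounded dimension. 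Consequently, the filtration on $\Ext^n$ is bounded uniformly in $U$, with $F^pE^n_{(U)} = E^n_{(U)}$ for $p\le 0$ and $F^pE^n_{(U)} = 0$ for $p>n$, so in the limit $F^pE^n = 0$ for $p>n$ and the intersection is automatically trivial. With this boundedness in hand, axiom (iv) holds and the limit spectral sequence is the one described.
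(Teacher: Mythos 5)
Your proof is correct and follows the same basic strategy as the paper's: form the colimit termwise and use that filtered colimits of abelian groups are exact, hence commute with kernels, images, cokernels, and cohomology. The paper's own proof is only two sentences and checks exactly the points you dispose of routinely (that $\varinjlim_u H^{p,q}(E_{(u)r})$ computes the cohomology of the limit differential, and that $\varinjlim_u F^pE^n_{(u)}/F^{p+1}E^n_{(u)}$ is the $p$-th graded piece of the limit filtration); it is entirely silent on the separatedness condition $\bigcap_p F^pE^n=0$ in axiom (iv), which you correctly single out as the one point that does not formally pass to a filtered colimit. Your concern is genuine: the stated hypothesis (a bound on $r_0$ for each $(p,q)$, uniform in $u$) controls only the differentials, not the length of the filtrations, and one can cook up systems where each $E_{(u)}$ has a separated exhaustive filtration but the colimit filtration fails to be separated. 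Your fix — that in the intended application the spectral sequences are first-quadrant Cartan--Eilenberg spectral sequences, so the filtration on $E^n_{(U)}$ satisfies $F^pE^n_{(U)}=E^n_{(U)}$ for $p\le 0$ and $F^pE^n_{(U)}=0$ for $p>n$ uniformly in $U$, whence separatedness survives the colimit — is exactly what is needed, and is consistent with the paper's remark that the $r_0$-hypothesis is aimed at first-quadrant spectral sequences (the appeal to noetherianity and bounded dimension is superfluous; first-quadrant boundedness of the filtration already suffices). So your argument is, if anything, more complete than the paper's; the only caveat is that, as you note, what you actually prove is the lemma under the additional (and in the application automatic) assumption that the filtrations are uniformly bounded, which is arguably how the lemma should have been stated.
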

\begin{proof} As the direct limit of an inductive system of abelian groups is an exact functor, the groups $\varinjlim_{u\in J} H^{p,q}(E_{(u)r})$ are still the cohomology groups of the limit differential $\varinjlim_{u\in J} d^{p,q}_{(u)r}$. Similarly, $\varinjlim_{u\in J}F^pE^{p+q}_{(u)}/F^{p+1}E^{p+q}_{(u)}$ is (up to isomorphism) the $p$-th quotient of the limit filtration $\varinjlim_{u\in J}F^pE^n_{(u)}$.
\end{proof}
Note that the  condition on $r_0$ in the lemma is satisfied if we only deal with spectral sequences in the first quadrant, i.e.\ $E^{p,q}_r=0$ for $p<0$ or $q<0$.\\

To proof Lemma B.1 we have to construct an inductive system of spectral sequences over the directed preordered set of open subsets $U$ of $X$ with $\codim(X\setminus U)>c$ and with $E_2$-term and $E^n$-terms as in the lemma. The existence of the inductive system is a direct consequence of the construction of the spectral sequences mentioned in the lemma. Let us repeat the construction for fixed $F$ and $G$. First of all we choose a resolution 
\[ 0\longrightarrow G \longrightarrow I^0\longrightarrow I^1 \longrightarrow I^2 \longrightarrow \ldots \]
of $G$ by injective $\ho_X$-modules $I^q$. Since restriction to an open subset is an exact functor, we obtain resolutions 
\[ 0\longrightarrow G|_U \longrightarrow I^0|_U\longrightarrow I^1|_U \longrightarrow I^2|_U \longrightarrow \ldots \]
of $G|_U$ by injective $\ho_U$-modules (cf.\ \cite{Hartshorne}, III, Lemma 6.1). The application of $\altH om(F,-)$ resp.\ $\altH om(F|_U,-)$ to these sequences yields the complexes 
\[\altH om(F,I^\bullet)=R\altH om(F,G)\quad\mbox{resp.}\quad \altH om(F|_U,I^\bullet|_U)=R\altH om(F|_U,G|_U).\] 
As $\altH om(E_1,E_2)|_U=\altH om(E_1|_U, E_2|_U)$ for two $\ho_X$-modules $E_1$ and $E_2$, we get $R\altH om(F,G)|_U=R\altH om(F|_U,G|_U)$ and, thus, $\altE xt^q(F,G)|_U=\altE xt^q(F|_U,G|_U)$ for all $q\in \mathbb{N}$ using the exactness of $|_U$ again. \\
To compute the derived functor of the composition $\Hom(F,G)=\Gamma(X,\altH om(F,G))$ we take a Cartan--Eilenberg resolution of the complex $R\altH om(F,G)$, i.e.\ a double complex $(C^{p,q})_{p,q\ge 0}$ of injective $\ho_X$-modules and a morphism $\varepsilon:\altH om(F,I^\bullet) \longrightarrow C^{0,\bullet}$ of complexes satisfying the following conditions.
\begin{itemize}
 \item[(a)] The naturally induced complexes
\begin{eqnarray*} 
&& 0 \longrightarrow \altH om(F,I^q) \stackrel{\varepsilon}{\longrightarrow} C^{0,q} \longrightarrow C^{1,q} \longrightarrow \ldots \;, \\
&& 0 \longrightarrow B^q(\altH om(F,I^\bullet)) \stackrel{\varepsilon}{\longrightarrow} B^q_{II}(C^{0,\bullet}) \longrightarrow B^q_{II}(C^{1,\bullet}) \longrightarrow \ldots \;, \\
&& 0 \longrightarrow Z^q(\altH om(F,I^\bullet)) \stackrel{\varepsilon}{\longrightarrow} Z^q_{II}(C^{0,\bullet}) \longrightarrow Z^q_{II}(C^{1,\bullet}) \longrightarrow \ldots \;, \\
&&  0\longrightarrow H^q(\altH om(F,I^\bullet))=\altE xt^q(F,G) \longrightarrow H^q_{II}(C^{0,\bullet}) \longrightarrow H^q_{II}(C^{1,\bullet}) \longrightarrow \ldots \;, 
\end{eqnarray*}
are acyclic, where $B^q(\altH om(F,I^\bullet))$ resp.\  $B_{II}^q(C^{p,\bullet})$ denotes the image of the $(q-1)$-th differential of the complex $\altH om(F,I^\bullet)$ resp.\ $C^{p,\bullet}$. Similarly, $Z$ is the kernel and $H$ denotes the cohomology. 
\item[(b)] The following short exact sequences split
\begin{eqnarray*}
 & 0 \longrightarrow B^q_{II}(C^{p,\bullet}) \longrightarrow Z^q_{II}(C^{p,\bullet}) \longrightarrow H^q_{II}(C^{p,\bullet}) \longrightarrow 0,& \\
 & 0 \longrightarrow Z^q_{II}(C^{p,\bullet}) \longrightarrow C^{p,q} \longrightarrow B^{q+1}_{II}(C^{p,\bullet}) \longrightarrow 0.&
\end{eqnarray*}
\end{itemize}
Using these properties and the fact that $C^{p,q}$ is injective, the sequence 
\begin{equation} \label{resolcoh} 0\longrightarrow H^q(\altH om(F,I^\bullet))=\altE xt^q(F,G) \longrightarrow H^q_{II}(C^{0,\bullet}) \longrightarrow H^q_{II}(C^{1,\bullet}) \longrightarrow \ldots\end{equation}
turns out to be an injective resolution of $\altE xt^q(F,G)$. Moreover, if we denote by $\tot(C^{\bullet,\bullet})$ the diagonal complex of $C^{\bullet,\bullet}$, i.e. $\tot(C^{\bullet,\bullet})^n=\bigoplus_{p+q=n}C^{p,q}$ with differential $d=d_I+d_{II}$, the complex homomorphism $\varepsilon$ induces a quasi-isomorphism $\tilde{\varepsilon}: R\altH om(F,G) \longrightarrow \tot(C^{\bullet,\bullet})$ (see \cite{Gelfand-Manin}, III, Lemma 12). Hence, $\tot(C^{\bullet,\bullet})$ is an injective resolution of $R\altH om(F,G)$ and we obtain 
\begin{equation} \label{totcompl} R\Hom(F,G)=R\Gamma(X,R\altH om(F,G))=\Gamma(X,\tot(C^{\bullet,\bullet}))=\tot(\Gamma(X,C^{\bullet,\bullet})).\end{equation}
Since $|_U$ is exact and maps injective $\ho_X$-modules to injective $\ho_U$-modules, the restriction of our Cartan--Eilenberg resolution to an open subset $U$ gives a Cartan--Eilenberg resolution of the complex $R\altH om(F|_U,G|_U)=\altH om(F|_U,I^\bullet|_U)$. \\
Now, we apply the section functor $\Gamma(X,-)$ to the Cartan--Eilenberg resolution and analogue for the restricted resolution. Since there is a natural map $\Gamma(X,E)\longrightarrow \Gamma(U,E|_U)$ for every $\ho_X$-module $E$, we obtain an inductive system of double complexes
\[ E^{p,q}_{(U)}\mathrel{\mathop{:}}=\Gamma(U,C^{p,q}|_U) \]
over the directed preordered set of all open subsets $U$ of $X$ with $\codim(X\setminus U)>c$. Using the resolution (\ref{resolcoh}) we obtain in particular the inductive system 
\begin{equation} \label{E2} \Ho^p(U,\altE xt^q(F|_U,G|_U))=H^p_I H^q_{II} E^{\bullet,\bullet}_{(U)},\end{equation}
and by equation (\ref{totcompl}) the inductive system 
\begin{equation}  \label{En} \Ext^n(F|_U,G|_U)=H^n\tot(E^{\bullet,\bullet}_{(U)}).\end{equation}
There is a functor from the category of all double complexes $E^{\bullet,\bullet}$ of abelian groups into the category of spectral sequences if abelian groups such that the spectral sequence associated to $E^{\bullet,\bullet}$ has the $E_2$-term 
\[ E_2^{p,q}=H^p_I H^q_{II} E^{\bullet,\bullet} \]
and the `limits' $E^n$ are given by 
\[ E^n=H^n\tot(E^{\bullet,\bullet}).\]
For a proof of this very technical construction we refer to \cite{Gelfand-Manin} or \cite{McCleary}. If we apply this functorial construction to our inductive system $(E_{(U)}^{\bullet,\bullet})_{\codim(X\setminus U)>c}$, we get an inductive system of spectral sequences with the correct $E_2$-term and the correct limits $E^n$ by (\ref{E2}) and (\ref{En}). This together with Lemma \ref{inductivesequence} proves Lemma B.1.

\end{appendix}

\bibliographystyle{plain}
\bibliography{Literatur}

\vfill
\textsc{\small S. Meinhardt: Mathematisches Institut, Universit\"at Bonn, Beringstr. 1, 53115 Bonn, Germany}\\
\textit{\small E-mail address:} \texttt{\small sven@math.uni-bonn.de}\\
\\
\textsc{\small H. Partsch: Mathematisches Institut, Heinrich-Heine-Universit\"at D\"usseldorf, Universit\"atsstr. 1, 40225 D\"usseldorf, Germany}\\
\textit{\small E-mail address:} \texttt{\small partsch@math.uni-duesseldorf.de}\\

\end{document}